\theoremstyle{plain}
\newtheorem{theorem}{Theorem}[section]
\newtheorem{lemma}[theorem]{Lemma}
\newtheorem{proposition}[theorem]{Proposition}
\theoremstyle{definition}
\newtheorem{definition}[theorem]{Definition}
\newtheorem{corollary}[theorem]{Corollary}
\theoremstyle{remark}
\newtheorem{remark}{\sc Remark}
\date{}
\title{\bf Study of $\Gamma$-Semigroups via its Operator Semigroups in terms of Atanassov's Intuitionistic Fuzzy Ideals }\vspace{.25 in}
\author{ {\bf Sujit Kumar Sardar$^1$,} {\bf Samit Kumar Majumder$^2$}\\
and\\
{\bf Manasi Mandal$^3$}\\
Department of Mathematics, Jadavpur\\
University, Kolkata-700032, INDIA\\
{\tt $^1$sksardarjumath@gmail.com}\\
{\tt $^2$samitfuzzy@gmail.com}\\
{\tt $^3$manasi$_{-}$ju@yahoo.in}
 }
\begin{document}
\maketitle

\begin{abstract}

In this paper some fundamental relationships of a $\Gamma$-semigroup and its operator semigroups in terms of intuitionistic fuzzy subsets, intuitionistic fuzzy ideals, intuitionistic fuzzy prime$($semiprime$)$ ideals, intuitionistic fuzzy ideal extensions are obtained. These are then used to obtain some important characterization theorems of $\Gamma$-semigroups in terms of intuitionistic fuzzy subsets so as to highlight the role of operator semigroups in the study of $\Gamma$-semigroups in terms of intuitionistic fuzzy subsets.\\

\textbf{AMS Mathematics Subject Classification[2000]:}\textit{\ }20N20

\textbf{Key Words and Phrases:}\textit{\ }$\Gamma $-semigroup, Intuitionistic fuzzy subset, Intuitionistic fuzzy ideal, Intuitionistic fuzzy prime$($semiprime$)$ ideal, Intutionistic fuzzy ideal extension, Operator semigroups.
\end{abstract}

\section{Introduction}
This is a continuation of our paper on Atanassov's Intutionistic Fuzzy Ideals of  $\Gamma$-semigroups\cite{S5} wherein we have investigated properties of Atanassov's intutionistic fuzzy\\$($prime, semiprime$)$ ideals and intutionistic fuzzy ideal extension and obtained characterization of regular $\Gamma$-semigroup and of prime ideals of $\Gamma$-semigroups.\\
\indent Dutta and Adhikari\cite{D1} have found operator semigroups of a $\Gamma$-semigroup to be a very effective tool in studying $\Gamma$-semigroups. The principal objective of this paper is to investigate as to whether the concept of operator semigroups can be made to work in the study of $\Gamma$-semigroups in terms of intuitionistic fuzzy subsets. In order to do this we deduce some fundamental relationships of a $\Gamma$-semigroup and its operator semigroups in terms of intuitionistic fuzzy subsets, intuitionistic fuzzy ideals, intuitionistic fuzzy prime$($semiprime$)$ ideals and intuitionistic fuzzy ideal extension. We then use these relationships to obtain some characterization theorems thereby establishing the effectiveness of operator semigroups in the study of $\Gamma$-semigroups in terms of intuitionistic fuzzy subsets.\\
 For preliminaries we refer to \cite{S5}.

\section{Main Results}

Many results of semigroups could be extended to $\Gamma$-semigroups directly and via operator semigroups\cite{A}$($left, right$)$ of a $\Gamma$-semigroup. In this section in order to make operator semigroups of a $\Gamma$-semigroup work in the context of $IFS$\footnote{$IFS,IFS(L),IFS(R),IFS(S),IFI(S)$ respectively denote intuitionistic fuzzy subset(s), intuitionistic fuzzy subset(s) of $L,$ intuitionistic fuzzy subset(s) of $R,$ intuitionistic fuzzy subset(s) of $S,$ intuitionistic fuzzy ideal(s) of $S.$} as it worked in the study of $\Gamma$-semigroups\cite{A,D1}, we obtain various relationships between $IFI(S)$ and that of its operator semigroups. Here, among other results we obtain an inclusion preserving bijection between the set of all $IFI(S)$ and that of its operator semigroups. Among other applications of this bijection we apply it to give new proofs of its ideal analogue obtained in \cite{D1} by Dutta and Adhikari.

\begin{definition}
\cite{D1} Let $S$ be a $\Gamma$-semigroup. Let us define a relation $\rho$ on $S\times\Gamma$ as follows :$(x,\alpha)\rho(y,\beta)$ if and only if $x\alpha s=y\beta s$ for all $s\in S$ and $\gamma x\alpha=\gamma y\beta$ for all $\gamma\in\Gamma.$ Then $\rho$ is an equivalence relation. Let $[x,\alpha]$ denote the equivalence class containing $(x,\alpha)$. Let $L=\{[x,\alpha]:x\in S,\alpha\in\Gamma\}.$ Then $L$ is a semigroup with respect to the multiplication defined by $[x,\alpha][y,\beta]=[x\alpha y,\beta].$\textit{ }This semigroup $L$ is called the left operator semigroup of the $\Gamma$-semigroup $S.$ Dually the right operator semigroup $R$ of $\Gamma$-semigroup $S$ is defined where the multiplication is defined by $[\alpha,a][\beta,b]=[\alpha a\beta,b].$

If there exists an element $[e,\delta]\in L([\gamma,f]\in R)$ such that $e\delta s=s($resp. $s\gamma f=s)$ for all $s\in S$ then $[e,\delta]($resp. $[\gamma,f])$ is called the left$($resp. right$)$ unity of $S.$
\end{definition}

\begin{definition}
For an $IFS(R),$ $A=(\mu_{A},\nu_{A})$ we define an $IFS(S),$ $A^{\ast}=(\mu_{A},\nu_{A})^{\ast}=(\mu_{A}^{\ast},\nu_{A}^{\ast})$ by $\mu_{A}^{\ast}(a)=\underset{\gamma\in\Gamma}{\inf}$ $\mu_{A}([\gamma,a])$ and $\nu_{A}^{\ast}(a)=\underset{\gamma\in\Gamma}{\sup}$ $\nu_{A}([\gamma,a]),$ where $a\in S.$ For an $IFS(S),$ $B=(\mu_{B},\nu_{B})$ we define an $IFS(R),$ $B^{\ast^{\prime}}=(\mu_{B},\nu_{B})^{\ast^{'}}=(\mu_{B}^{\ast^{^{\prime}}},\nu_{B}^{\ast^{^{\prime}}})$ by $\mu_{B}^{\ast^{^{\prime}}}([\alpha,a])=\underset{s\in S}{\inf}$ $\mu_{B}(s\alpha a)$ and $\nu_{B}^{\ast^{^{\prime}}}([\alpha,a])=\underset{s\in S}{\sup}$ $\nu_{B}(s\alpha a),$ where $[\alpha,a]\in R.$ For an $IFS(L),$ $C=(\mu_{C},\nu_{C})$ we define an $IFS(S),$ $C^{+}=(\mu_{C},\nu_{C})^{+}=(\mu_{C}^{+},\nu_{C}^{+})$ by $\mu_{C}^{+}(a)=\underset{\gamma\in\Gamma}{\inf}$ $\mu_{C}([a,\gamma])$ and $\nu_{C}^{+}(a)=\underset{\gamma\in\Gamma}{\sup}$ $\nu_{C}([a,\gamma]),$ where $a\in S$. For an $IFS(S),$ $D=(\mu_{D},\nu_{D})$ we define an $IFS(L),$ $D^{+^{^{\prime}}}=(\mu_{D},\nu_{D})^{+^{^{\prime}}}=(\mu_{D}^{+^{^{\prime}}},\nu_{D}^{+^{^{\prime}}})$ by $\mu_{D}^{+^{^{\prime}}}([a,\alpha])=\underset{s\in S}{\inf}$ $\mu_{D}(a\alpha s)$ and $\nu_{D}^{+^{^{\prime}}}([a,\alpha])=\underset{s\in S}{\sup}$ $\nu_{D}(a\alpha s)$ where
$[a,\alpha]\in L.$
\end{definition}

Now we recall the following propositions from $\cite{D1}$ which were proved therein for one sided ideals. But the results can be proved to be true for two sided ideals.\newline

\begin{proposition}
$\cite{D1}$ Let $S$ be a $\Gamma$-semigroup with unities and $R$ be its right operator semigroup. If $P$ is a $LI(R)(I(R))$\footnote{$LI(R),LI(S),I(R),I(S),IFLI(R),IFLI(S),IFI(R),IFI(S)$ respectively denote left ideal(s) of $R,$ left ideal(s) of $S,$ ideal(s) of $R,$ ideal(s) of $S,$ intuitionistic fuzzy left ideal(s) of $R,$ intuitionistic fuzzy left ideal(s) of $S,$ intuitionistic fuzzy ideal(s) of $R,$ intuitionistic fuzzy ideal(s) of $S,$} then $P^{\ast}$ is a $LI(S)(I(S)).$
\end{proposition}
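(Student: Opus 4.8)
The plan is to unwind the crisp meaning of $P^{\ast}$, settle non-emptiness from the ``unities'' hypothesis, and then verify the defining inclusion of a (left, resp.\ two-sided) ideal of $S$ by factoring the relevant generator inside $R$ and invoking that $P$ is a (left, resp.\ two-sided) ideal of $R$.

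First I would record the crisp description $P^{\ast}=\{a\in S:[\gamma,a]\in P\text{ for every }\gamma\in\Gamma\}$. This is the crisp shadow of the operation $A\mapsto A^{\ast}$ introduced above: taking for $A$ the characteristic pair of $P$, the value $\inf_{\gamma\in\Gamma}\mu_{A}([\gamma,a])$ equals $1$ exactly when all $[\gamma,a]$ lie in $P$. Non-emptiness of $P^{\ast}$ is the one place the hypothesis that $S$ has unities enters, and I would handle it as in \cite{D1}: starting from an arbitrary element of the (nonempty) left ideal $P$ and using the left unity $[e,\delta]\in L$ together with the right unity $[\omega,f]\in R$, one produces an $a\in S$ whose entire fibre $\{[\gamma,a]:\gamma\in\Gamma\}$ is absorbed into $P$, so that $a\in P^{\ast}$.

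The substance of the proof is the inclusion $S\Gamma P^{\ast}\subseteq P^{\ast}$. Let $a\in P^{\ast}$, $s\in S$, $\beta\in\Gamma$; I must show $[\gamma,s\beta a]\in P$ for each $\gamma\in\Gamma$. Fixing $\gamma$, the pair $[\gamma,s]$ is an element of $R$, and the multiplication of $R$ gives $[\gamma,s][\beta,a]=[\gamma s\beta,a]$. The bridging identity --- read off directly from the equivalence $\rho$ on $\Gamma\times S$, both of whose clauses reduce here to associativity in $S$ --- is that $[\gamma s\beta,a]=[\gamma,s\beta a]$ in $R$. Since $a\in P^{\ast}$ forces $[\beta,a]\in P$ and $P$ is a left ideal of $R$, the product $[\gamma,s][\beta,a]$ lies in $P$, whence $[\gamma,s\beta a]\in P$. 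As $\gamma$ was arbitrary, $s\beta a\in P^{\ast}$, so $P^{\ast}$ is a left ideal of $S$.

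For the two-sided assertion I would run the mirror computation: for $a\in P^{\ast}$, $s\in S$, $\beta\in\Gamma$ one has $[\gamma,a\beta s]=[\gamma a\beta,s]=[\gamma,a][\beta,s]$, and now $[\gamma,a]\in P$ together with $P$ being a right ideal of $R$ gives $[\gamma,a\beta s]\in P$ for all $\gamma$, i.e.\ $a\beta s\in P^{\ast}$; combined with the previous paragraph this yields $P^{\ast}\Gamma S\subseteq P^{\ast}$ as well, so $P^{\ast}$ is an ideal of $S$. This is exactly the one-sided argument of \cite{D1} performed once on each side, which is the sense in which the claimed passage to two-sided ideals is routine. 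I do not anticipate a real obstacle; the only care needed is (i) the non-emptiness of $P^{\ast}$, which genuinely uses the unities, and (ii) keeping the equivalence classes straight --- i.e.\ checking the identities $[\gamma s\beta,a]=[\gamma,s\beta a]$ and $[\gamma a\beta,s]=[\gamma,a\beta s]$ against both clauses of $\rho$ before invoking them.
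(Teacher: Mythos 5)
Your argument is correct, and since the paper gives no proof of this proposition --- it is recalled from \cite{D1}, with the two-sided case merely asserted to follow similarly --- you have in effect supplied the intended argument: the bridging identities $[\gamma s\beta,a]=[\gamma,s\beta a]$ and $[\gamma a\beta,s]=[\gamma,a\beta s]$ both reduce to associativity checked against the two clauses of $\rho$, and absorption in $R$ then transfers to absorption in $S$ exactly as you describe, on each side separately for the two-sided case. One small sharpening: the unities are not in fact needed for non-emptiness, since for any $[\alpha,a]\in P$ and any $s\in S$ one already has $[\gamma,s\alpha a]=[\gamma,s][\alpha,a]\in P$ for every $\gamma\in\Gamma$, so $s\alpha a\in P^{\ast}$; the unities hypothesis is carried along here because the proposition later feeds into the bijection theorems, where it is genuinely used.
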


\begin{proposition}
$\cite{D1}$ Let $S$ be a $\Gamma$-semigroup with unities and $R$ be its right operator semigroup. If $Q$ is a $LI(S)(I(S))$ then $Q^{\ast^{^{\prime}}}$ is a $LI(R)(I(R)).$
\end{proposition}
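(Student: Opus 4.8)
Since the statement is about crisp ideals, I would first read $Q^{\ast^{\prime}}$ in the set form to which Definition 2.2 specialises when $\mu_Q$ is a characteristic function, namely $Q^{\ast^{\prime}}=\{[\alpha,a]\in R : s\alpha a\in Q\ \text{for every}\ s\in S\}=\{[\alpha,a]\in R : S\alpha a\subseteq Q\}$, and I would record the one preliminary point that matters: this set is independent of representatives, since $[\alpha,a]=[\beta,b]$ in $R$ forces $s\alpha a=s\beta b$ for all $s\in S$ by the defining relation $\rho$. The proof then breaks into three short verifications.

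First, non-emptiness: assuming $Q$ is a left ideal of $S$ (which also covers the two-sided case), I pick any $q\in Q$ and any $\alpha\in\Gamma$, and observe that $s\alpha q\in S\Gamma Q\subseteq Q$ for every $s\in S$, so $[\alpha,q]\in Q^{\ast^{\prime}}$ and hence $Q^{\ast^{\prime}}\neq\emptyset$. Next, closure of $Q^{\ast^{\prime}}$ under multiplication on the left by $R$: for $[\beta,b]\in R$ and $[\alpha,a]\in Q^{\ast^{\prime}}$ we have $[\beta,b][\alpha,a]=[\beta b\alpha,a]$, and for every $t\in S$ the element $t\beta b\alpha a=(t\beta b)\alpha a$ lies in $S\alpha a\subseteq Q$ because $t\beta b\in S$, so $[\beta b\alpha,a]\in Q^{\ast^{\prime}}$; thus $RQ^{\ast^{\prime}}\subseteq Q^{\ast^{\prime}}$. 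Finally, closure under multiplication on the right by $R$ when $Q$ is a right ideal: for $[\alpha,a]\in Q^{\ast^{\prime}}$ and $[\beta,b]\in R$ we have $[\alpha,a][\beta,b]=[\alpha a\beta,b]$, and for every $t\in S$ the element $t\alpha a\beta b=(t\alpha a)\beta b$ lies in $Q\Gamma S\subseteq Q$ since $t\alpha a\in Q$ and $Q$ is a right ideal, so $[\alpha a\beta,b]\in Q^{\ast^{\prime}}$; thus $Q^{\ast^{\prime}}R\subseteq Q^{\ast^{\prime}}$. Putting these together: if $Q\in LI(S)$, the first two verifications give $Q^{\ast^{\prime}}\in LI(R)$; if $Q\in I(S)$, then $Q$ is in particular a left ideal, so the first two apply and the third supplies right absorption, giving $Q^{\ast^{\prime}}\in I(R)$.

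I do not expect any genuine obstacle; the argument is the crisp counterpart of the intuitionistic-fuzzy results that are the real target, and is worth recording only so that the fuzzy versions — and ultimately the inclusion-preserving bijection between $IFI(S)$ and $IFI(R)$ — can be compared with their ideal analogues in \cite{D1}. The two things I would be careful about are the well-definedness remark and keeping straight which one-sided hypothesis on $Q$ is used at each step: the left-absorption inclusion uses nothing about $Q$ beyond membership in $Q^{\ast^{\prime}}$, so it is essentially automatic, and the real content lies in non-emptiness and in right absorption. The standing hypothesis that $S$ has unities is not actually invoked for this particular proposition.
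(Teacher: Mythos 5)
Your verification is correct: the well-definedness observation, the non-emptiness check via $S\Gamma Q\subseteq Q$, and the two absorption computations $t(\beta b\alpha)a=(t\beta b)\alpha a$ and $t(\alpha a\beta)b=(t\alpha a)\beta b$ are exactly what is needed, and you are right that the unities hypothesis plays no role in this direction. Note that the paper itself supplies no proof here --- the proposition is quoted from Dutta and Adhikari \cite{D1} with only the remark that their one-sided argument extends to two-sided ideals --- so your write-up is simply the standard crisp argument made explicit, and there is nothing in the paper to contrast it with.
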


For convenience of the readers, we may note that for a $\Gamma$-semigroup $S$ and its left, right operator semigroups $L,R$ respectively four mappings namely $()^{+},$ $()^{+^{^{\prime}}},()^{\ast},()^{\ast^{^{\prime}}}$ occur. They
are defined as follows: For $I\subseteq R,I^{\ast}=\{s\in S,[\alpha,s]\in I\forall\alpha\in\Gamma\}$; for $P\subseteq S,P^{\ast^{^{\prime}}}=\{[\alpha,x]\in R:s\alpha x\in P\forall s\in S\}$; for $J\subseteq L,J^{+}=\{s\in S,[s,\alpha]\in J\forall\alpha\in\Gamma\}$; for $Q\subseteq S,Q^{+^{^{\prime}}}=\{[x,\alpha]\in L:x\alpha s\in Q\forall s\in S\}.$

\begin{proposition}
Let $A=(\mu_{A},\nu_{A})$ be an $IFS(R),$ then $[U(\mu_{A};t)]^{\ast}=U((\mu_{A})^{\ast};t)$ and $[L(\nu_{A};t)]^{\ast}=L((\nu_{A})^{\ast};t)$ for all $t\in\lbrack0,1]$, provided the sets are non-empty.
\end{proposition}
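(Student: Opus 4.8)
The plan is to prove both set equalities by directly unwinding three definitions --- the operator $()^{\ast}$ on subsets of $R$, the operator $()^{\ast}$ on intuitionistic fuzzy subsets, and the level sets $U(\cdot;t)$, $L(\cdot;t)$ --- and then reducing everything to the elementary observation that a family of reals indexed by $\Gamma$ is bounded below (resp. above) by $t$ if and only if its infimum is $\geq t$ (resp. its supremum is $\leq t$).

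First I would fix $t\in[0,1]$ and an arbitrary $s\in S$ and chase the membership $s\in[U(\mu_{A};t)]^{\ast}$. By the definition of $()^{\ast}$ on subsets of $R$, this is equivalent to $[\gamma,s]\in U(\mu_{A};t)$ for every $\gamma\in\Gamma$, i.e. to $\mu_{A}([\gamma,s])\geq t$ for every $\gamma\in\Gamma$. Now I would invoke the key (trivial) step: since $\Gamma\neq\varnothing$, the condition ``$\mu_{A}([\gamma,s])\geq t$ for all $\gamma\in\Gamma$'' holds precisely when $\underset{\gamma\in\Gamma}{\inf}\,\mu_{A}([\gamma,s])\geq t$ --- the infimum of a family of reals all of which are $\geq t$ is itself $\geq t$, and conversely each member of the family dominates the infimum. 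By the definition of $(\mu_{A})^{\ast}$, the last inequality is exactly $(\mu_{A})^{\ast}(s)\geq t$, that is, $s\in U((\mu_{A})^{\ast};t)$. As $s$ was arbitrary, this yields $[U(\mu_{A};t)]^{\ast}=U((\mu_{A})^{\ast};t)$.

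The second equality is obtained by the same argument with all inequalities reversed and $\inf$ replaced by $\sup$: $s\in[L(\nu_{A};t)]^{\ast}$ iff $\nu_{A}([\gamma,s])\leq t$ for all $\gamma\in\Gamma$ iff $\underset{\gamma\in\Gamma}{\sup}\,\nu_{A}([\gamma,s])\leq t$ iff $(\nu_{A})^{\ast}(s)\leq t$ iff $s\in L((\nu_{A})^{\ast};t)$.

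There is no genuine obstacle here; the only point requiring a moment's care is the passage between the universally quantified family of scalar inequalities and the single inequality on the infimum (or supremum), which uses nothing beyond $\Gamma\neq\varnothing$. The non-emptiness hypothesis in the statement is a mild precaution so that the level sets to which $()^{\ast}$ is applied are genuine subsets of interest; in fact, if $U(\mu_{A};t)=\varnothing$ then, since $\Gamma\neq\varnothing$, both sides collapse to $\varnothing$ as well, so the identity persists even without it.
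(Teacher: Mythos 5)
Your proof is correct and follows essentially the same chain of equivalences as the paper's own argument: unwind $()^{\ast}$ on subsets, unwind the level-set definitions, and pass between the universally quantified inequalities over $\Gamma$ and the single inequality on the infimum (resp. supremum). The only difference is that you make the $\inf$/$\sup$ step and the role of the non-emptiness hypothesis explicit, which the paper leaves tacit.
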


\begin{proof}
Let $m\in S.$ Then $m \in [U(\mu_{A};t)]^{\ast}\Leftrightarrow\lbrack\gamma,m]\in U(\mu_{A};t)$\text{}$\forall\gamma\in\Gamma\Leftrightarrow\mu_{A}([\gamma,m])\geq t$\text{ }$\forall\gamma\in\Gamma
\Leftrightarrow\underset{\gamma\in\Gamma}{\inf}$ $\mu_{A}([\gamma,m])\geq t\Leftrightarrow(\mu_{A})^{\ast}(m)\geq t\Leftrightarrow m\in U((\mu_{A})^{\ast};t).$ Again let $n\in S.$ Then $n\in [L(\nu_{A};t)]^{\ast}\Leftrightarrow\lbrack\gamma,n]\in L(\nu_{A};t)$\text{}$\forall\gamma\in\Gamma\Leftrightarrow\nu_{A}([\gamma,n])\leq t$\text{ }$\forall\gamma\in\Gamma\Leftrightarrow\underset{\gamma\in\Gamma}{\sup}$ $\nu_{A}([\gamma,n])\leq t\Leftrightarrow(\nu_{A})^{\ast}(n)\leq t\Leftrightarrow n\in L((\nu_{A})^{\ast};t).$ Hence $[U(\mu_{A};t)]^{\ast}=U((\mu_{A})^{\ast};t)$ and $[L(\nu_{A};t)]^{\ast}=L((\nu_{A})^{\ast};t).$\newline
\end{proof}

\begin{proposition}
Let $B=(\mu_{B},\nu_{B})$ be an $IFS(S).$ Then $[U(\mu_{B};t)]^{\ast^{\prime}}=U((\mu_{B})^{\ast^{\prime}};t)$ and $[L(\nu_{B};t)]^{\ast^{\prime}}=L((\nu_{B})^{\ast^{\prime}};t)$ for all $t\in\lbrack0,1]$, provided the sets under consideration are non-empty.
\end{proposition}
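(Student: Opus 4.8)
The plan is to imitate the proof of the previous proposition line by line, trading the operation $()^{\ast}$ for $()^{\ast^{\prime}}$ and interchanging the roles of $S$ and $R$. So I fix $t\in[0,1]$ and prove each of the two claimed set equalities by a chain of equivalences applied to an arbitrary element $[\alpha,m]\in R$.

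For the first equality I start from the set-theoretic description of $()^{\ast^{\prime}}$ recalled above, namely $P^{\ast^{\prime}}=\{[\alpha,x]\in R:s\alpha x\in P\ \text{for all}\ s\in S\}$. Thus $[\alpha,m]\in[U(\mu_{B};t)]^{\ast^{\prime}}$ says exactly that $s\alpha m\in U(\mu_{B};t)$ for every $s\in S$, i.e.\ $\mu_{B}(s\alpha m)\geq t$ for every $s\in S$. The one step that is not purely formal is the equivalence
\[
\bigl(\mu_{B}(s\alpha m)\geq t\ \text{for all}\ s\in S\bigr)\ \Longleftrightarrow\ \inf_{s\in S}\mu_{B}(s\alpha m)\geq t,
\]
which holds because $t$ is a lower bound of the set $\{\mu_{B}(s\alpha m):s\in S\}$ exactly when it is $\le$ each of its members, and the infimum is the greatest lower bound. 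Since $\inf_{s\in S}\mu_{B}(s\alpha m)=(\mu_{B})^{\ast^{\prime}}([\alpha,m])$ by Definition~2.2, this last inequality is precisely the statement $[\alpha,m]\in U((\mu_{B})^{\ast^{\prime}};t)$, so the first equality follows.

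For the second equality I run the same argument with $L(\nu_{B};t)$ in place of $U(\mu_{B};t)$: the membership $[\alpha,m]\in[L(\nu_{B};t)]^{\ast^{\prime}}$ amounts to $\nu_{B}(s\alpha m)\le t$ for all $s\in S$, which---arguing dually, with the supremum as the least upper bound---is equivalent to $\sup_{s\in S}\nu_{B}(s\alpha m)\le t$, that is to $(\nu_{B})^{\ast^{\prime}}([\alpha,m])\le t$, i.e.\ $[\alpha,m]\in L((\nu_{B})^{\ast^{\prime}};t)$. I do not expect any genuine obstacle here: the statement is the exact right-operator-semigroup mirror of the preceding proposition, and its proof is a routine unwinding of Definition~2.2 together with the description of $()^{\ast^{\prime}}$ on ordinary subsets. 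The only point deserving a word is the non-emptiness proviso, which is present merely so that the level-set notation is meaningful; the equivalences above remain valid (vacuously) when one side is empty, so the equalities hold trivially in that degenerate case as well.
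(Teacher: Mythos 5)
Your proof is correct and follows essentially the same route as the paper's: an element-wise chain of equivalences unwinding the set-theoretic description of $()^{\ast^{\prime}}$ and the definition of $(\mu_{B})^{\ast^{\prime}}$, $(\nu_{B})^{\ast^{\prime}}$, with the infimum/supremum characterizations as greatest lower bound/least upper bound doing the only real work. Your added remarks justifying the $\inf$/$\sup$ step and addressing the non-emptiness proviso are sound and merely make explicit what the paper leaves implicit.
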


\begin{proof}
Let $[\alpha,x]\in R$ and $t$ is as mentioned in the statement. Then $\lbrack\alpha,x]\in [U(\mu_{B};t)]^{\ast^{^{\prime}}}\Leftrightarrow m\alpha x\subseteq U(\mu_{B};t)\text{ }\forall m\in S\Leftrightarrow\mu_{B}(m\alpha x)\geq t\text{ }\forall m\in S\Leftrightarrow
\underset{m\in S}{\inf}$ $\mu_{B}(m\alpha x)\geq t\Leftrightarrow(\mu_{B})^{\ast^{^{\prime}}}([\alpha,x])\geq t\Leftrightarrow \lbrack\alpha,x]\in U((\mu_{B})^{\ast^{\prime}};t).$ Again let $[\beta,y]\in R$ and $t$ is as mentioned in the statement. Then $\lbrack\beta,y]\in [L(\nu_{B};t)]^{\ast^{^{\prime}}}\Leftrightarrow n\beta y\subseteq L(\nu_{B};t)\text{ }\forall n\in S\Leftrightarrow\nu_{B}(n\beta y)\leq t\text{ }\forall n\in S\Leftrightarrow
\underset{n\in S}{\sup}$ $\nu_{B}(n\beta y)\leq t\Leftrightarrow(\nu_{B})^{\ast^{^{\prime}}}([\beta,y])\leq t\Leftrightarrow \lbrack\beta,y]\in L((\nu_{B})^{\ast^{\prime}};t).$ Hence $[U(\mu_{B};t)]^{\ast^{\prime}}=U((\mu_{B})^{\ast^{\prime}};t)$ and $[L(\nu_{B};t)]^{\ast^{\prime}}=L((\nu_{B})^{\ast^{\prime}};t).$\newline
\end{proof}

In what follows $S$ denotes a $\Gamma$-semigroup with unities\cite{D1}, $L,$ $R$ be its left and right operator semigroups respectively.

\begin{proposition}
If $A=(\mu_{A},\nu_{A})\in IFI(R)(IFLI(R))$, then $A^{\ast}=(\mu_{A},\nu_{A})^{\ast}=(\mu_{A}^{\ast},\nu_{A}^{\ast})\in IFI(S)($respectively $IFLI(S))$.
\end{proposition}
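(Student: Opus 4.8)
The plan is to verify directly that $A^{\ast} = (\mu_A^{\ast}, \nu_A^{\ast})$ satisfies the defining inequalities of an intuitionistic fuzzy ideal of $S$, namely that for all $a, b \in S$ and $\gamma \in \Gamma$ we have $\mu_A^{\ast}(a\gamma b) \ge \max\{\mu_A^{\ast}(a), \mu_A^{\ast}(b)\}$ and dually $\nu_A^{\ast}(a\gamma b) \le \min\{\nu_A^{\ast}(a), \nu_A^{\ast}(b)\}$ (with only the $\mu_A^{\ast}(a\gamma b)\ge\mu_A^{\ast}(a)$, $\nu_A^{\ast}(a\gamma b)\le\nu_A^{\ast}(a)$ halves needed in the left-ideal case). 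The computation should be driven by the multiplication in $R$: for $\delta \in \Gamma$ one has $[\delta, a\gamma b] = [\delta a \gamma, b] = [\delta, a][\gamma, b]$ in $R$, so membership of $a\gamma b$ in the level sets of $A^\ast$ is controlled by membership of products $[\delta,a][\gamma,b]$ in the level sets of $A$.

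First I would unwind the infimum/supremum definitions: $\mu_A^{\ast}(a\gamma b) = \inf_{\delta \in \Gamma} \mu_A([\delta, a\gamma b])$, and then rewrite $[\delta, a\gamma b]$ as a product in $R$ as above so that the $IFI(R)$ (or $IFLI(R)$) hypothesis on $A$ applies term-by-term. For the right-ideal direction, I would instead use an expression of $[\delta, a\gamma b]$ as $[\delta a, \gamma b']$-type factorization showing $a\gamma b$ "ends in $b$" appropriately; the existence of unities is exactly what guarantees such factorizations exist and that the sets appearing are nonempty. The cleanest route, though, is probably to invoke Proposition (the level-set identity $[U(\mu_A;t)]^{\ast} = U(\mu_A^{\ast};t)$ and $[L(\nu_A;t)]^{\ast} = L(\nu_A^{\ast};t)$) together with Proposition (the classical fact that $P$ an ideal of $R$ implies $P^{\ast}$ an ideal of $S$): since $A \in IFI(R)$ iff every nonempty $U(\mu_A;t)$ and $L(\nu_A;t)$ is an ideal of $R$, the level-set identity transports this to $A^\ast$, giving that every nonempty $U(\mu_A^{\ast};t)$, $L(\nu_A^{\ast};t)$ is an ideal of $S$, which is equivalent to $A^{\ast} \in IFI(S)$.

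I would present both: a short direct verification for the reader who wants the explicit inequality manipulation, and the level-set argument as the conceptual proof. Concretely, for the direct approach, fix $a,b \in S$, $\gamma \in \Gamma$; for each $\delta \in \Gamma$ write $[\delta, a\gamma b] = [\delta, a][\gamma, b]$, apply $\mu_A([\delta,a][\gamma,b]) \ge \mu_A([\gamma,b])$ (using that $A$ is an $IFLI(R)$, i.e. $\mu_A(xy)\ge\mu_A(y)$) and then take infimum over $\delta$ to get $\mu_A^{\ast}(a\gamma b) \ge \inf_\delta \mu_A([\gamma,b])$; a little care is needed because the index on the right no longer ranges over all of $\Gamma$ in the right factor, which is where one uses the right-unity $[\gamma_0, f]$ to rewrite $[\gamma, b]$ and pass to the full infimum $\mu_A^{\ast}(b)$. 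The $\nu$-inequalities are the order-reversed mirror image. For a two-sided $IFI(R)$ one repeats the argument on the other side using $[\delta, a\gamma b] = [\delta a\gamma, b]$ versus a factorization isolating $a$, again leaning on the unities.

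The main obstacle I anticipate is precisely this index-range bookkeeping: the operator-semigroup multiplication $[\delta, a][\gamma, b] = [\delta a \gamma, b]$ keeps the second coordinate fixed, so when I take $\inf_{\delta\in\Gamma}$ on the left I do not automatically see $\inf$ over all representatives needed to recover $\mu_A^{\ast}(a)$ or $\mu_A^{\ast}(b)$ on the right. Resolving this cleanly is exactly what the existence of a left unity $[e,\delta_0]\in L$ and right unity $[\gamma_0,f]\in R$ buys us — they let me replace $a$ by $e\delta_0 a$ and $b$ by $b\gamma_0 f$ appropriately so the relevant infima range over all of $\Gamma$ — and it is also the reason the hypothesis "with unities" appears in the statement. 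Because this is the same bookkeeping already handled in Propositions and at the crisp level, routing the proof through the level-set translation (Proposition) is the safest way to avoid re-deriving it, and that is the version I would ultimately write up.
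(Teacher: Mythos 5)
Your proposal is correct, and the route you ultimately settle on --- transporting the hypothesis through the level-set identity $[U(\mu_{A};t)]^{\ast}=U(\mu_{A}^{\ast};t)$, $[L(\nu_{A};t)]^{\ast}=L(\nu_{A}^{\ast};t)$, combining it with the crisp fact that $P$ an ideal of $R$ implies $P^{\ast}$ an ideal of $S$, and then invoking the level-set characterization of intuitionistic fuzzy ideals --- is exactly the proof the paper gives (the paper additionally spends a few lines verifying that the relevant level sets are non-empty, which your ``every nonempty level set'' phrasing quietly absorbs). Your instinct to avoid the direct inequality verification because of the index-range bookkeeping matches the authors' choice.
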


\begin{proof}
Suppose $A=(\mu_{A},\nu_{A})\in IFI(R)$. Then $U(\mu_{A};t)$ and $L(\nu_{A};t)$ are $I(R),$ $\forall t\in [0,1].$ Hence $[U(\mu_{A};t)]^{\ast}$ and $[L(\nu_{A};t)]^{\ast}$ are $I(S),$ $\forall t\in [0,1](cf.$ Proposition $2.3).$ Now since $A=(\mu_{A},\nu_{A})$ is an $IFI(R)$, $A=(\mu_{A},\nu_{A})$ is a non-empty $IFS(R).$ Hence for some $[\alpha,m]\in
R,$ $0<\mu_{A}([\alpha,m])+\nu_{A}([\alpha,m])\leq1$. Then $U(\mu_{A};t)\neq\phi$ and $L(\nu_{A};t)\neq\phi$ where $t:=\mu_{A}([\alpha,m])=\nu_{A}([\alpha,m])$. So by
the same argument applied above $[U(\mu_{A};t)]^{\ast}\neq\phi$ and $[L(\nu_{A};t)]^{\ast}\neq\phi$. Let $u\in [U(\mu_{A};t)]^{\ast}$. Then $[\beta,u]\in U(\mu_{A};t)$ for all $\beta\in\Gamma$. Hence
$\mu_{A}([\beta,u])\geq t$. This implies that $\underset{\beta\in\Gamma}{\inf}$ $\mu_{A}([\beta,u])\geq t,$ $i.e.,$ $(\mu_{A})^{\ast}(u)\geq t$. Hence $u\in U((\mu_{A})^{\ast};t).$ Hence $U((\mu_{A})^{\ast};t)\neq\phi$. By similar argument we can show that $L((\nu_{A})^{\ast};t)\neq\phi$. Consequently, $[U(\mu_{A};t)]^{\ast}=U((\mu_{A})^{\ast};t)$ and $[L(\nu_{A};t)]^{\ast}=L((\nu_{A})^{\ast};t)(cf.$ Proposition $2.5).$ It follows that $U((\mu_{A})^{\ast};t)$ and $L((\nu_{A})^{\ast};t)$ are $I(S)$ for all $t\in [0,1].$ Hence $A^{\ast}=(\mu_{A},\nu_{A})^{\ast}=(\mu_{A}^{\ast},\nu_{A}^{\ast})$ is an $IFI(S)(cf.$ Theorem $3.10\cite{S5})$. Similarly we can prove the other case also.
\end{proof}

 In a similar fashion by using Propositions $2.4,$ $2.6$ and Theorems $3.9\cite{S5},3.10 \cite{S5}$ we deduce the following proposition.

\begin{proposition}
If $B=(\mu_{B},\nu_{B})\in IFI(S)(IFLI(S)),$ then $B^{\ast^{^{\prime}}}=(\mu_{B},\nu_{B})^{\ast^{'}}=(\mu_{B}^{\ast^{^{\prime}}},\nu_{B}^{\ast^{^{\prime}}})\in IFI(R)($respectively $IFLI(R))$.
\end{proposition}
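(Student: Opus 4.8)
The plan is to run the proof of Proposition $2.7$ with the roles of $S$ and $R$ interchanged and the operator $()^{\ast}$ replaced by $()^{\ast^{^{\prime}}}$, invoking Proposition $2.4$ in place of Proposition $2.3$, Proposition $2.6$ in place of Proposition $2.5$, and the level-subset characterisations of intuitionistic fuzzy (left) ideals, Theorems $3.9$ and $3.10$ of \cite{S5}, as before. So assume $B=(\mu_{B},\nu_{B})\in IFI(S)$. By Theorems $3.9$, $3.10$ of \cite{S5}, every non-empty set of the form $U(\mu_{B};t)$ or $L(\nu_{B};t)$, $t\in[0,1]$, is an $I(S)$; and by Proposition $2.4$ each such set then yields an $I(R)$ under $()^{\ast^{^{\prime}}}$, namely $[U(\mu_{B};t)]^{\ast^{^{\prime}}}$ and $[L(\nu_{B};t)]^{\ast^{^{\prime}}}$ respectively.

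First I would dispose of non-emptiness, since Proposition $2.6$ carries that hypothesis. As $B$ is an $IFI(S)$ it is a non-empty $IFS(S)$, so there is $m\in S$ with $0<\mu_{B}(m)+\nu_{B}(m)\leq1$; arguing separately for $\mu_{B}$ and $\nu_{B}$ as in the proof of Proposition $2.7$, one obtains values $t$ with $U(\mu_{B};t)\neq\phi$ and $L(\nu_{B};t)\neq\phi$. For such a $t$, $U(\mu_{B};t)$ is a non-empty $I(S)$, so by Proposition $2.4$ the set $[U(\mu_{B};t)]^{\ast^{^{\prime}}}$ is a non-empty $I(R)$; picking $[\alpha,u]\in[U(\mu_{B};t)]^{\ast^{^{\prime}}}$ we get $s\alpha u\in U(\mu_{B};t)$, i.e. $\mu_{B}(s\alpha u)\geq t$, for all $s\in S$, hence $(\mu_{B})^{\ast^{^{\prime}}}([\alpha,u])=\underset{s\in S}{\inf}\,\mu_{B}(s\alpha u)\geq t$, so $[\alpha,u]\in U((\mu_{B})^{\ast^{^{\prime}}};t)$ and $U((\mu_{B})^{\ast^{^{\prime}}};t)\neq\phi$. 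The same argument with $\sup$ in place of $\inf$ gives $L((\nu_{B})^{\ast^{^{\prime}}};t)\neq\phi$.

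Now with both sides non-empty, Proposition $2.6$ gives $[U(\mu_{B};t)]^{\ast^{^{\prime}}}=U((\mu_{B})^{\ast^{^{\prime}}};t)$ and $[L(\nu_{B};t)]^{\ast^{^{\prime}}}=L((\nu_{B})^{\ast^{^{\prime}}};t)$. Since the left-hand sides are $I(R)$ by Proposition $2.4$, it follows that $U((\mu_{B})^{\ast^{^{\prime}}};t)$ and $L((\nu_{B})^{\ast^{^{\prime}}};t)$ are $I(R)$ for every $t\in[0,1]$ for which they are non-empty. Applying Theorems $3.9$, $3.10$ of \cite{S5} once more yields $B^{\ast^{^{\prime}}}=(\mu_{B}^{\ast^{^{\prime}}},\nu_{B}^{\ast^{^{\prime}}})\in IFI(R)$. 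The $IFLI$ statement is obtained by the identical argument, using throughout the left-ideal halves of Propositions $2.4$ and $2.6$ and the left-ideal version of the characterisation theorem.

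The only step requiring genuine attention — the "main obstacle", such as it is — is supplying Proposition $2.6$ with its non-emptiness hypothesis: one must observe that a witness $[\alpha,u]$ for $[U(\mu_{B};t)]^{\ast^{^{\prime}}}\neq\phi$ is automatically a witness for $U((\mu_{B})^{\ast^{^{\prime}}};t)\neq\phi$, and this is exactly where the $\inf$ (resp. $\sup$) in the definition of $()^{\ast^{^{\prime}}}$ and the standing assumption that $S$ has unities (used through Proposition $2.4$, whose conclusion that $Q^{\ast^{^{\prime}}}$ is an $I(R)$ presupposes $Q^{\ast^{^{\prime}}}\neq\phi$) both come into play. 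Everything else is a routine transcription of the proof of Proposition $2.7$.
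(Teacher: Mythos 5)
Your proof is correct and follows exactly the route the paper intends: the paper gives no written proof of this proposition, stating only that it follows ``in a similar fashion'' from Propositions $2.4$, $2.6$ and Theorems $3.9$, $3.10$ of \cite{S5}, and your argument is precisely that transcription of the proof of Proposition $2.7$ with $()^{\ast}$ replaced by $()^{\ast^{\prime}}$. Your explicit handling of the non-emptiness hypothesis of Proposition $2.6$ by transferring a witness $[\alpha,u]$ is the same device used in the paper's proof of Proposition $2.7$, so there is nothing to add.
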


\begin{remark}
The left operator analogues of Propositions $2.3$-$2.8.$ are also true.
\end{remark}

In view of Remark $1,$ we deduce the following theorem.

\begin{theorem}
Let $S$ be a $\Gamma$-semigroup with unities and $L$ be its left operator semigroup. Then there exists an inclusion preserving bijection $A \mapsto A^{+^{^{\prime}}}$ between the set of all $IFI(S)(IFRI(S))$\footnote{$IFRI(L),IFRI(S)$ respectively denote intuitionistic fuzzy right ideal(s) of $L$ and intuitionistic fuzzy right ideal(s) of $S.$} and set of all $IFI(L)($resp. $IFRI(L)),$ where $A=(\mu_{A},\nu_{A})$ is an $IFI(S)($resp. $IFRI(S)).$
\end{theorem}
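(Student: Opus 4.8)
The plan is to show that $A\mapsto A^{+'}$ is a bijection by exhibiting a two-sided inverse, the map $C\mapsto C^{+}$, and then to observe that both maps (hence also the inverse) are monotone for the intuitionistic fuzzy inclusion. First, by Remark $1$ (the left-operator analogues of Propositions $2.7$ and $2.8$), $()^{+'}$ carries $IFI(S)$ into $IFI(L)$ and $IFRI(S)$ into $IFRI(L)$, while $()^{+}$ carries $IFI(L)$ into $IFI(S)$ and $IFRI(L)$ into $IFRI(S)$; so both maps are well defined between the relevant classes. It then suffices to establish the two composition identities $(A^{+'})^{+}=A$ for all $A=(\mu_{A},\nu_{A})\in IFI(S)$ and $(C^{+})^{+'}=C$ for all $C=(\mu_{C},\nu_{C})\in IFI(L)$, together with monotonicity; the $IFRI$ case will run verbatim.

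For the first identity, unwinding Definition $2.2$ gives, for $a\in S$, $(\mu_{A}^{+'})^{+}(a)=\inf_{\gamma\in\Gamma}\inf_{s\in S}\mu_{A}(a\gamma s)$ and $(\nu_{A}^{+'})^{+}(a)=\sup_{\gamma\in\Gamma}\sup_{s\in S}\nu_{A}(a\gamma s)$. Since $A$ is an intuitionistic fuzzy right ideal of $S$, $\mu_{A}(a\gamma s)\ge\mu_{A}(a)$ and $\nu_{A}(a\gamma s)\le\nu_{A}(a)$ for all $\gamma\in\Gamma$, $s\in S$, which yields $(\mu_{A}^{+'})^{+}\ge\mu_{A}$ and $(\nu_{A}^{+'})^{+}\le\nu_{A}$ pointwise. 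For the reverse inequalities I would invoke the right unity $[\omega,f]\in R$, for which $a\omega f=a$: the single choice $\gamma=\omega$, $s=f$ already realizes the value $\mu_{A}(a)$ in the infimum and $\nu_{A}(a)$ in the supremum, so equality holds and $(A^{+'})^{+}=A$.

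For the second identity, Definition $2.2$ together with the multiplication $[x,\alpha][y,\beta]=[x\alpha y,\beta]$ of $L$ gives, for $[a,\alpha]\in L$, $(\mu_{C}^{+})^{+'}([a,\alpha])=\inf_{s\in S}\inf_{\gamma\in\Gamma}\mu_{C}([a,\alpha][s,\gamma])$ and likewise with $\sup$ and $\nu_{C}$. Since $C$ is an intuitionistic fuzzy right ideal of $L$ one has $\mu_{C}([a,\alpha][s,\gamma])\ge\mu_{C}([a,\alpha])$ and $\nu_{C}([a,\alpha][s,\gamma])\le\nu_{C}([a,\alpha])$, which gives one inequality. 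For the other it suffices to find $s\in S$, $\gamma\in\Gamma$ with $[a,\alpha][s,\gamma]=[a,\alpha]$ in $L$; this is supplied by the left unity $[e,\delta]$ of $S$ (take $s=e$, $\gamma=\delta$), using that $[e,\delta]$ is the identity of $L$ when $S$ has unities, exactly as in the ideal-theoretic arguments of \cite{D1}. Hence $(C^{+})^{+'}=C$, and the two identities show that $A\mapsto A^{+'}$ is a bijection with inverse $C\mapsto C^{+}$.

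Finally, inclusion-preservation is routine from monotonicity of $\inf$ and $\sup$: if $A\subseteq B$ (i.e. $\mu_{A}\le\mu_{B}$ and $\nu_{A}\ge\nu_{B}$) then $\mu_{A}^{+'}\le\mu_{B}^{+'}$ and $\nu_{A}^{+'}\ge\nu_{B}^{+'}$, i.e. $A^{+'}\subseteq B^{+'}$, and the same computation shows $()^{+}$ is monotone, so the inverse preserves inclusion too and $A\mapsto A^{+'}$ is an order isomorphism between $IFI(S)$ and $IFI(L)$ (and between $IFRI(S)$ and $IFRI(L)$). The one genuinely non-routine point — where the hypothesis ``with unities'' is indispensable — is the two ``reverse'' inequalities in the composition identities: $a\omega f=a$ secures $(A^{+'})^{+}\subseteq A$ and $[e,\delta]$ being the identity of $L$ secures $(C^{+})^{+'}\subseteq C$.
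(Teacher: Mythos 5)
Your proposal is correct and follows essentially the same route as the paper: both directions of each composition identity $(A^{+'})^{+}=A$ and $(C^{+})^{+'}=C$ are obtained exactly as in the paper's proof, one inequality from the (right) ideal property and the reverse from the unities, with inclusion-preservation following from monotonicity of $\inf$ and $\sup$. The only cosmetic difference is that you package injectivity and surjectivity as the existence of the two-sided inverse $()^{+}$ rather than deriving them separately, which is logically equivalent.
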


\begin{proof}
Let $A=(\mu_{A},\nu_{A})\in IFI(S)(IFRI(S))$ and $x\in S.$ Then%
\[
(\mu_{A}^{+^{^{\prime}}})^{+}(x)=\underset{\gamma\in\Gamma}{\inf}\mu_{A}^{+^{^{\prime}}}([x,\gamma])=\underset{\gamma\in\Gamma}{\inf} \underset{s\in S}{[\inf}\mu_{A}(x\gamma s)]\geq\mu_{A}(x).
\] \\ Again%
\[
(\nu_{A}^{+^{^{\prime}}})^{+}(x)=\underset{\gamma\in\Gamma}{\sup}%
\nu_{A}^{+^{^{\prime}}}([x,\gamma])=\underset{\gamma\in\Gamma}{\sup}%
\underset{s\in S}{[\sup}\nu_{A}(x\gamma s)]\leq\nu_{A}(x).
\]

Hence $A\subseteq(A^{+^{^{\prime}}})^{+}.$ Let $[\gamma,f]$ be the
right unity of $S.$ Then $x\gamma f=x$ for all $x\in S.$ Then%
\[
\mu_{A}(x)=\mu_{A}(x\gamma f)\geq\underset{\alpha\in\Gamma}{\inf}\underset{s\in
S}{[\inf}\mu_{A}(x\alpha s)]=\underset{\alpha\in\Gamma}{\inf}\mu_{A}
^{+^{^{\prime}}}([x,\alpha])=(\mu_{A}^{+^{^{\prime}}})^{+}(x).
\]\\ Again%
\[
\nu_{A}(x)=\nu_{A}(x\gamma f)\leq\underset{\alpha\in\Gamma}{\sup}\underset{s\in
S}{[\sup}\nu_{A}(x\alpha s)]=\underset{\alpha\in\Gamma}{\sup}\nu_{A}
^{+^{^{\prime}}}([x,\alpha])=(\nu_{A}^{+^{^{\prime}}})^{+}(x).
\]

So $A\supseteq(A^{+^{^{\prime}}})^{+}.$ Hence $(A^{+^{^{\prime}}})^{+}=A.$ Thus the said mapping is one-one.
 Now let $B=(\mu_{B},\nu_{B})\in IFI(L)(IFRI(S)).$ Then%
\begin{align*}
(\mu_{B}^{+})^{+^{^{\prime}}}([x,\alpha])  &  =\underset{s\in S}{\inf}\mu_{B}
^{+}(x\alpha s)=\underset{s\in S}{\inf}[\underset{\gamma\in\Gamma}{\inf}%
\mu_{B}([x\alpha s,\gamma])]\\
&  =\underset{s\in S}{\inf}[\underset{\gamma\in\Gamma}{\inf}\mu_{B}([x,\alpha
][s,\gamma])]\geq\mu_{B}([x,\alpha]).
\end{align*}
Again%
\begin{align*}
(\nu_{B}^{+})^{+^{^{\prime}}}([x,\alpha])  &  =\underset{s\in S}{\sup}\nu_{B}
^{+}(x\alpha s)=\underset{s\in S}{\sup}[\underset{\gamma\in\Gamma}{\sup}%
\nu_{B}([x\alpha s,\gamma])]\\
&  =\underset{s\in S}{\sup}[\underset{\gamma\in\Gamma}{\sup}\nu_{B}([x,\alpha
][s,\gamma])]\geq\nu_{B}([x,\alpha]).
\end{align*}
So $B\subseteq(B^{+})^{+^{^{\prime}}}.$ Let $[e,\delta]$ be the left unity
of $L.$ Then%
\begin{align*}
\mu_{B}([x,\alpha])  &  =\mu_{B}([x,\alpha][e,\delta])\geq\underset{s\in S}{\inf
}[\underset{\gamma\in\Gamma}{\inf}\mu_{B}([x,\alpha][s,\gamma])]\\
&  =(\mu_{B}^{+})^{+^{^{\prime}}}([x,\alpha]).
\end{align*}
Again%
\begin{align*}
\nu_{B}([x,\alpha])  &  =\nu_{B}([x,\alpha][e,\delta])\leq\underset{s\in S}{\sup
}[\underset{\gamma\in\Gamma}{\sup}\nu_{B}([x,\alpha][s,\gamma])]\\
&  =(\nu_{B}^{+})^{+^{^{\prime}}}([x,\alpha])
\end{align*}
So $B\supseteq(B^{+})^{+^{^{\prime}}}$ and hence $B=(B^{+}%
)^{+^{^{\prime}}}.$ Consequently, the correspondence $A\mapsto A^{+^{^{\prime
}}}$ is a bijection. Now let $C=(\mu_{C},\nu_{C}),D=(\mu_{D},\nu_{D})\in IFI(S)(IFRI(S))$ be such
that $C\subseteq D,$ $i.e.,\mu_{C}\subseteq\mu_{D}$ and $\nu_{C}\supseteq\nu_{D}.$ Then for all $[x,\alpha]\in L,$%
\[
\mu_{C}^{+^{^{\prime}}}([x,\alpha])=\underset{s\in S}{\inf}\mu
_{C}(x\alpha s)\leq\underset{s\in S}{\inf}\mu_{D}(x\alpha s)=\mu
_{D}^{+^{^{\prime}}}([x,\alpha])
\] and%
\[
\nu_{C}^{+^{^{\prime}}}([x,\alpha])=\underset{s\in S}{\sup}\nu
_{C}(x\alpha s)\geq\underset{s\in S}{\sup}\nu_{D}(x\alpha s)=\nu
_{D}^{+^{^{\prime}}}([x,\alpha]).
\]

Thus $\mu_{C}^{+^{^{\prime}}}\subseteq\mu_{D}^{+^{^{\prime}}}$ and $\nu_{C}^{+^{^{\prime}}}\supseteq\nu_{D}^{+^{^{\prime}}}.$ Consequently, $C^{+^{^{\prime}}}\subseteq D^{+^{^{\prime}}}.$
Hence $A\mapsto A^{+^{^{\prime}}}$ is an inclusion preserving bijection.
The rest of the proof follows from Remark $1.$\newline
\end{proof}

In a similar way by using Proposition $2.7$ and Proposition $2.8$ we can deduce the following theorem.

\begin{theorem}
Let $S$ be a $\Gamma$-semigroup with unities and $R$ be its right operator semigroup. Then there exists an inclusion preserving bijection $B\mapsto B^{\ast^{^{\prime}}}$ between the set of all $IFI(S)(IFLI(S))$ and set of all $IFI(R)($resp. $IFLI(R)),$ where $B=(\mu_{B},\nu_{B})$ is an $IFI($resp. $IFLI(S)).$
\end{theorem}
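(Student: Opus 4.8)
The plan is to transcribe, with the two operator semigroups interchanged, the argument just carried out for the left operator semigroup: the maps $()^{\ast}$ and $()^{\ast^{\prime}}$ will play here the roles played there by $()^{+}$ and $()^{+^{\prime}}$, and Propositions 2.7 and 2.8 will replace the left-operator propositions used (through Remark 1) there. First I would record that the two candidate maps are well defined on the stated classes: by Proposition 2.8, $B\mapsto B^{\ast^{\prime}}$ carries $IFI(S)$ into $IFI(R)$ and $IFLI(S)$ into $IFLI(R)$, while by Proposition 2.7, $A\mapsto A^{\ast}$ carries $IFI(R)$ into $IFI(S)$ and $IFLI(R)$ into $IFLI(S)$. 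It then remains to show that these two maps are mutually inverse and inclusion preserving, after which the $IFI$ and the $IFLI$ cases come out together, since Propositions 2.7 and 2.8 already package both.

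To see that $B\mapsto B^{\ast^{\prime}}$ is injective I would fix $B=(\mu_B,\nu_B)$ in $IFI(S)$ (resp. $IFLI(S)$) and verify that $(B^{\ast^{\prime}})^{\ast}=B$. Unwinding the definitions gives, for $x\in S$, $(\mu_B^{\ast^{\prime}})^{\ast}(x)=\underset{\gamma\in\Gamma}{\inf}\ \underset{s\in S}{\inf}\ \mu_B(s\gamma x)$ and $(\nu_B^{\ast^{\prime}})^{\ast}(x)=\underset{\gamma\in\Gamma}{\sup}\ \underset{s\in S}{\sup}\ \nu_B(s\gamma x)$. Since $x$ occurs as a right factor of $s\gamma x$ and $B$ is an intuitionistic fuzzy (left) ideal of $S$, we have $\mu_B(s\gamma x)\ge\mu_B(x)$ and $\nu_B(s\gamma x)\le\nu_B(x)$, hence $B\subseteq(B^{\ast^{\prime}})^{\ast}$. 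For the reverse inclusion I would invoke the left unity $[e,\delta]\in L$ of $S$, for which $e\delta x=x$ for every $x\in S$; specializing $\gamma=\delta$ and $s=e$ in the infimum and supremum above yields $(\mu_B^{\ast^{\prime}})^{\ast}(x)\le\mu_B(e\delta x)=\mu_B(x)$ and $(\nu_B^{\ast^{\prime}})^{\ast}(x)\ge\nu_B(e\delta x)=\nu_B(x)$, so $(B^{\ast^{\prime}})^{\ast}\subseteq B$. Thus $(B^{\ast^{\prime}})^{\ast}=B$.

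For surjectivity I would fix $A=(\mu_A,\nu_A)$ in $IFI(R)$ (resp. $IFLI(R)$) and verify $(A^{\ast})^{\ast^{\prime}}=A$. For $[\alpha,a]\in R$ one has $(\mu_A^{\ast})^{\ast^{\prime}}([\alpha,a])=\underset{s\in S}{\inf}\ \underset{\gamma\in\Gamma}{\inf}\ \mu_A([\gamma,s\alpha a])$ and the companion identity with $\sup$ and $\nu_A$. The key observation is that in $R$ the element $[\gamma,s\alpha a]$ is the product $[\gamma,s][\alpha,a]$, by the multiplication of Definition 2.1, so it has $[\alpha,a]$ as a right factor; as $A$ is an intuitionistic fuzzy (left) ideal of $R$ this forces $\mu_A([\gamma,s\alpha a])\ge\mu_A([\alpha,a])$ and $\nu_A([\gamma,s\alpha a])\le\nu_A([\alpha,a])$, i.e. $A\subseteq(A^{\ast})^{\ast^{\prime}}$. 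For the opposite inclusion I would use the right unity $[\eta,f]\in R$ of $S$ (so that $s\eta f=s$ for all $s\in S$), which, again by the product of Definition 2.1, acts as a left identity of $R$, i.e. $[\eta,f][\alpha,a]=[\alpha,a]$; specializing $s=f$ and $\gamma=\eta$ gives $(\mu_A^{\ast})^{\ast^{\prime}}([\alpha,a])\le\mu_A([\eta,f][\alpha,a])=\mu_A([\alpha,a])$ and dually for $\nu_A$, so $(A^{\ast})^{\ast^{\prime}}\subseteq A$. Hence $(A^{\ast})^{\ast^{\prime}}=A$, showing that $B\mapsto B^{\ast^{\prime}}$ is onto with inverse $A\mapsto A^{\ast}$.

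Finally, inclusion-preservation is a direct monotonicity check: if $C,D\in IFI(S)$ with $C\subseteq D$, that is $\mu_C\subseteq\mu_D$ and $\nu_C\supseteq\nu_D$, then for every $[\alpha,a]\in R$, $\mu_C^{\ast^{\prime}}([\alpha,a])=\underset{s\in S}{\inf}\ \mu_C(s\alpha a)\le\underset{s\in S}{\inf}\ \mu_D(s\alpha a)=\mu_D^{\ast^{\prime}}([\alpha,a])$ and, likewise, $\nu_C^{\ast^{\prime}}([\alpha,a])\ge\nu_D^{\ast^{\prime}}([\alpha,a])$; hence $C^{\ast^{\prime}}\subseteq D^{\ast^{\prime}}$, and the converse implication follows by applying $()^{\ast}$ and the identity $(B^{\ast^{\prime}})^{\ast}=B$ just established. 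I do not anticipate a genuine obstacle, the whole argument being the mirror image of the left-operator theorem; the only point requiring care is to call on the correct one-sided unity on the correct side --- the left unity $[e,\delta]$ of $S$ for the identity on the $IFI(S)$ side and the right unity $[\eta,f]$ of $S$, viewed as a left identity of $R$, for the identity on the $IFI(R)$ side --- facts inherited from the operator-semigroup machinery of \cite{D1}.
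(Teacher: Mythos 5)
Your proposal is correct and is essentially the proof the paper intends: Theorem 2.10 is stated in the paper with only the remark that it follows ``in a similar way'' from Propositions 2.7 and 2.8, i.e.\ by dualizing the proof of Theorem 2.9, and your argument is exactly that dualization, including the two points that need care --- using the left unity $[e,\delta]$ of $S$ to get $(B^{\ast^{\prime}})^{\ast}\subseteq B$ and the identification $[\gamma,s\alpha a]=[\gamma,s][\alpha,a]$ together with the right unity acting as an identity of $R$ to get $(A^{\ast})^{\ast^{\prime}}\subseteq A$. No gaps.
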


Now to apply the above theorem for giving a new proof of Theorem
$4.6\cite{D1}$ and its two sided ideal analogue we deduce the
following lemmas.

\begin{lemma}
Let $I$ be a $LI(R)(I(R))$ of a $\Gamma$-semigroup $S$ and $P=(\chi_{I},\chi^{c}_{I})$ where $\chi_{I}$ is the characteristic function of $I.$ Then $P^{\ast}=(\chi_{I},\chi^{c}_{I})^{\ast}=((\chi_{I})^{\ast},(\chi^{c}_{I})^{\ast})=(\chi_{I^{\ast}},\chi^{c}_{I^{\ast}}).$
\end{lemma}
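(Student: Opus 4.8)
The plan is to unwind all definitions and verify the two componentwise identities $(\chi_I)^{\ast} = \chi_{I^{\ast}}$ and $(\chi^c_I)^{\ast} = \chi^c_{I^{\ast}}$, since $P^{\ast} = ((\chi_I)^{\ast},(\chi^c_I)^{\ast})$ by definition of the $()^{\ast}$ operation on an $IFS(R)$. The set-theoretic star is $I^{\ast} = \{s \in S : [\gamma,s] \in I \ \forall \gamma \in \Gamma\}$, and the fuzzy star sends $\mu_A$ to $\mu_A^{\ast}(a) = \inf_{\gamma \in \Gamma} \mu_A([\gamma,a])$ and $\nu_A$ to $\nu_A^{\ast}(a) = \sup_{\gamma \in \Gamma} \nu_A([\gamma,a])$. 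So everything reduces to a direct computation with characteristic functions.

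First I would handle the membership function. Fix $a \in S$ and compute $(\chi_I)^{\ast}(a) = \inf_{\gamma \in \Gamma} \chi_I([\gamma,a])$. If $a \in I^{\ast}$, then $[\gamma,a] \in I$ for every $\gamma$, so $\chi_I([\gamma,a]) = 1$ for all $\gamma$ and the infimum is $1 = \chi_{I^{\ast}}(a)$. If $a \notin I^{\ast}$, there is some $\gamma_0 \in \Gamma$ with $[\gamma_0,a] \notin I$, hence $\chi_I([\gamma_0,a]) = 0$ and the infimum is $0 = \chi_{I^{\ast}}(a)$. This gives $(\chi_I)^{\ast} = \chi_{I^{\ast}}$. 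Next I would do the nonmembership function analogously: $(\chi^c_I)^{\ast}(a) = \sup_{\gamma \in \Gamma} \chi^c_I([\gamma,a])$, where $\chi^c_I = 1 - \chi_I$. If $a \in I^{\ast}$ then $\chi^c_I([\gamma,a]) = 0$ for all $\gamma$, so the supremum is $0 = \chi^c_{I^{\ast}}(a)$; if $a \notin I^{\ast}$ then $\chi^c_I([\gamma_0,a]) = 1$ for the bad $\gamma_0$, so the supremum is $1 = \chi^c_{I^{\ast}}(a)$. This gives $(\chi^c_I)^{\ast} = \chi^c_{I^{\ast}} = (\chi_{I^{\ast}})^c$. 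Combining the two identities yields $P^{\ast} = ((\chi_I)^{\ast},(\chi^c_I)^{\ast}) = (\chi_{I^{\ast}},\chi^c_{I^{\ast}})$, which is exactly the claim.

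There is essentially no obstacle here — the lemma is a bookkeeping step that translates the crisp $()^{\ast}$ operation of Dutta--Adhikari into the intuitionistic-fuzzy $()^{\ast}$ operation for the special case of characteristic-function $IFS$'s, so that Proposition 2.7 (applied to $P$, which is an $IFI(R)$ whenever $I$ is an $I(R)$) can later be invoked in place of the corresponding crisp result. The only mild point worth stating explicitly is that $\inf$ over $\Gamma$ of a $\{0,1\}$-valued family equals the value of the ``all ones'' indicator, i.e. the family is constantly $1$ iff $a \in I^{\ast}$; this is where the universal quantifier over $\Gamma$ in the definition of $I^{\ast}$ matches the infimum over $\Gamma$ in the definition of $(\chi_I)^{\ast}$. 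I would also remark in passing that the left-operator analogue, with $()^{\ast}$ replaced by $()^{+}$ and $I \subseteq L$, holds by the identical argument, since it too is covered by Remark~1.
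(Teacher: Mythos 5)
Your proof is correct and follows essentially the same route as the paper's: a two-case analysis on whether $a\in I^{\ast}$, computing $\inf_{\gamma\in\Gamma}\chi_I([\gamma,a])$ and $\sup_{\gamma\in\Gamma}\chi^c_I([\gamma,a])$ directly and matching them against $\chi_{I^{\ast}}(a)$ and $\chi^c_{I^{\ast}}(a)$. No discrepancies to report.
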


\begin{proof}
Suppose $s\in I^{\ast}$. Then $[\beta,s]\in I$ for all $\beta\in\Gamma$. This
means $\underset{\beta\in\Gamma}{\inf}(\chi_{I}([\beta,s]))=1$ and $\underset{\beta\in\Gamma}{\sup}(\chi^{c}_{I}([\beta,s]))=0$. Also
$\chi_{I^{\ast}}(s)=1$ and $\chi^{c}_{I^{\ast}}(s)=0$. Now suppose $s\notin I^{\ast}$. Then there exists
$\delta\in\Gamma$ such that $[\delta,s]\notin I.$ Hence $\chi_{I}([\delta,s])=0,\chi^{c}_{I}([\delta,s])=1$ and so $\underset{\beta\in\Gamma}{\inf}(\chi_{I}([\beta,s]))=0,\underset{\beta\in\Gamma}{\sup}(\chi^{c}_{I}([\beta,s]))=1$. Hence $(\chi_{I})^{\ast}(s)=0$ and $(\chi^{c}_{I})^{\ast}(s)=1.$ Again $(\chi_{I^{\ast}})(s)=0$ and $(\chi^{c}_{I^{\ast}})(s)=1.$ Thus $P^{\ast}=(\chi_{I},\chi^{c}_{I})^{\ast}=((\chi_{I})^{\ast},(\chi^{c}_{I})^{\ast})=(\chi_{I^{\ast}},\chi^{c}_{I^{\ast}}).$\newline
\end{proof}

The following lemma follows in a similar way.

\begin{lemma}
Let $I$ be a $RI(S)(I(S)),$ $P=(\chi_{I},\chi^{c}_{I})$ and $R$ be the
right operator semigroup of $S$. Then $P^{\ast^{\prime}}=(\chi_{I},\chi^{c}_{I})^{\ast^{'}}=((\chi_{I})^{\ast^{\prime}},(\chi^{c}_{I})^{\ast^{\prime}})=(\chi_{I^{\ast^{\prime}}},\chi^{c}_{I^{\ast^{\prime}}}),$ where $\chi_{I}$ is the characteristic function of $I$.
\end{lemma}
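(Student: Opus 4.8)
The plan is to mimic the proof of Lemma 2.13 almost verbatim, with the operator $()^{\ast}$ replaced by $()^{\ast'}$ and the subset $I^{\ast}$ replaced by $I^{\ast'}$. The only external facts needed are the definition of $()^{\ast'}$ on intuitionistic fuzzy subsets of $S$ (namely $\mu_{B}^{\ast'}([\alpha,a])=\inf_{s\in S}\mu_{B}(s\alpha a)$, $\nu_{B}^{\ast'}([\alpha,a])=\sup_{s\in S}\nu_{B}(s\alpha a)$) and the set-theoretic description recalled earlier, $I^{\ast'}=\{[\alpha,x]\in R:s\alpha x\in I\ \forall s\in S\}$. First I would fix an arbitrary $[\alpha,x]\in R$ and split into the two cases $[\alpha,x]\in I^{\ast'}$ and $[\alpha,x]\notin I^{\ast'}$.

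In the case $[\alpha,x]\in I^{\ast'}$, by the description of $I^{\ast'}$ we have $s\alpha x\in I$ for every $s\in S$, so $\chi_{I}(s\alpha x)=1$ and $\chi_{I}^{c}(s\alpha x)=0$ for all $s\in S$. Taking the infimum, respectively supremum, over $s\in S$ then gives $(\chi_{I})^{\ast'}([\alpha,x])=1$ and $(\chi_{I}^{c})^{\ast'}([\alpha,x])=0$; on the other hand $[\alpha,x]\in I^{\ast'}$ forces $\chi_{I^{\ast'}}([\alpha,x])=1$ and $\chi_{I^{\ast'}}^{c}([\alpha,x])=0$, so the two intuitionistic fuzzy subsets agree at $[\alpha,x]$. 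In the case $[\alpha,x]\notin I^{\ast'}$ there exists $t\in S$ with $t\alpha x\notin I$, hence $\chi_{I}(t\alpha x)=0$ and $\chi_{I}^{c}(t\alpha x)=1$; since $\chi_{I},\chi_{I}^{c}$ take values in $\{0,1\}$ this forces $(\chi_{I})^{\ast'}([\alpha,x])=\inf_{s\in S}\chi_{I}(s\alpha x)=0$ and $(\chi_{I}^{c})^{\ast'}([\alpha,x])=\sup_{s\in S}\chi_{I}^{c}(s\alpha x)=1$, which again matches $\chi_{I^{\ast'}}([\alpha,x])=0$, $\chi_{I^{\ast'}}^{c}([\alpha,x])=1$. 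Since $[\alpha,x]\in R$ was arbitrary, combining the two cases yields $P^{\ast'}=((\chi_{I})^{\ast'},(\chi_{I}^{c})^{\ast'})=(\chi_{I^{\ast'}},\chi_{I^{\ast'}}^{c})$.

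There is essentially no genuine obstacle: the argument is a two-case unwinding of definitions. The only point that requires (minimal) care is bookkeeping — keeping straight that $()^{\ast'}$ sends an $IFS(S)$ to an $IFS(R)$ via an infimum/supremum over $s\in S$ of the $\Gamma$-product $s\alpha x$, and pairing this correctly with the set operation $I\mapsto I^{\ast'}$ on subsets of $S$. I would also note that the ideal hypothesis on $I$ is not actually used in this computation (only the membership description of $I^{\ast'}$ is needed); it is retained because the lemma is meant to be applied in the situation where $I$ is an ideal, so that $I^{\ast'}$ is an ideal of $R$ (cf. Proposition 2.4) and $P$ is an $IFI(R)$ (cf. Proposition 2.8), which is precisely the setting in which Theorem 2.11 will then be invoked to recover Theorem $4.6$ of \cite{D1} and its two-sided analogue.
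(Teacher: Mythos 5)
Your proof is correct and is exactly the argument the paper intends: the paper gives no explicit proof here, stating only that the lemma ``follows in a similar way'' to the preceding lemma for $(\,)^{\ast}$, and your two-case unwinding of the definitions of $(\,)^{\ast'}$ and $I^{\ast'}$ is precisely that adaptation. Your side remark that the ideal hypothesis on $I$ is not used in the computation is also accurate.
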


\begin{remark}
By drawing an analogy we deduce results similar to the above lemmas for left operator semigroup $L$ of the $\Gamma$-semigroup $S,i.e.,$ for the functions $+$ and $+^{\prime}$ .
\end{remark}

Now we present a new proof of the following result which is originally due to Dutta and Adhikari\cite{D1}.

\begin{theorem}
$\cite{D1}$Let S be a $\Gamma$-semigroup with unities. Then there
exists an inclusion preserving bijection between the set of all $I(S)(LI(S))$ and that of its right operator semigroup $R$ via the mapping $I\rightarrow I^{{\ast}^{^{\prime}}}$.
\end{theorem}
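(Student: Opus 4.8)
The plan is to bootstrap the crisp (ordinary ideal) result from the intuitionistic fuzzy result of Theorem~2.11 by passing through characteristic-function pairs. The key translation device is Lemma~2.12, which tells us that for a right ideal $I$ of $S$, the $IFS(R)$ obtained as $(\chi_I,\chi^c_I)^{\ast^{\prime}}$ is again a characteristic-function pair, namely $(\chi_{I^{\ast^{\prime}}},\chi^c_{I^{\ast^{\prime}}})$. Thus the crisp operation $I\mapsto I^{\ast^{\prime}}$ is exactly the restriction of the fuzzy operation $A\mapsto A^{\ast^{\prime}}$ to those $IFI(S)$ that happen to be characteristic-function pairs.

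First I would set up the correspondence: given $I\in I(S)$ (or $LI(S)$), form $P=(\chi_I,\chi^c_I)$, which is an $IFI(S)$ (resp. $IFLI(S)$) by the standard fact that an $IFS$ is an intuitionistic fuzzy ideal iff all its level sets are ideals — here the nontrivial level sets are just $I$ itself. Then $P^{\ast^{\prime}}\in IFI(R)$ (resp. $IFLI(R)$) by Proposition~2.8, and by Lemma~2.12 it equals $(\chi_{I^{\ast^{\prime}}},\chi^c_{I^{\ast^{\prime}}})$; reading off its level sets shows $I^{\ast^{\prime}}$ is an $I(R)$ (resp. $LI(R)$). So $I\mapsto I^{\ast^{\prime}}$ is a well-defined map into the ideals of $R$.

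Next I would establish bijectivity by exhibiting the inverse as $J\mapsto J^{\ast}$ and checking $(I^{\ast^{\prime}})^{\ast}=I$ and $(J^{\ast})^{\ast^{\prime}}=J$. For the first, note $(\chi_I,\chi^c_I)^{\ast^{\prime}}=(\chi_{I^{\ast^{\prime}}},\chi^c_{I^{\ast^{\prime}}})$ by Lemma~2.12, and applying the dual of Lemma~2.12 (Lemma~2.13, i.e. the $\ast$ version) gives $((\chi_I,\chi^c_I)^{\ast^{\prime}})^{\ast}=(\chi_{(I^{\ast^{\prime}})^{\ast}},\chi^c_{(I^{\ast^{\prime}})^{\ast}})$. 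But by Theorem~2.11 the fuzzy round trip $A\mapsto (A^{\ast^{\prime}})^{\ast}$ is the identity on $IFI(S)$, so applied to $A=(\chi_I,\chi^c_I)$ it yields $(\chi_I,\chi^c_I)$ back; comparing characteristic functions forces $(I^{\ast^{\prime}})^{\ast}=I$. The other composite is symmetric, using that characteristic-function pairs of $I(R)$ are $IFI(R)$ and again invoking the identity from Theorem~2.11 in the other order. Finally, inclusion-preservation is immediate: $I\subseteq I'$ gives $\chi_I\subseteq\chi_{I'}$ and $\chi^c_I\supseteq\chi^c_{I'}$, i.e. $(\chi_I,\chi^c_I)\subseteq(\chi_{I'},\chi^c_{I'})$ as $IFS$, and since $A\mapsto A^{\ast^{\prime}}$ preserves inclusion (Theorem~2.11), so does $I\mapsto I^{\ast^{\prime}}$ after translating back via Lemma~2.12.

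I expect the main obstacle to be purely bookkeeping: making sure that "$(\chi_I,\chi^c_I)$ is an $IFI(S)$" is justified cleanly (one should cite the level-set characterization, Theorem~3.10 of \cite{S5}, exactly as in the proof of Proposition~2.7), and that the $\ast$-analogue of Lemma~2.12 needed for the round-trip computation is indeed available — this is the content of Lemma~2.13 together with Remark~3, so one must be careful about which of $\ast,\ast^{\prime}$ pairs up with the right and left operator semigroups. There is no genuine analytic difficulty, since all the infima and suprema over $\Gamma$ or $S$ of characteristic functions are already disposed of inside Lemmas~2.12 and~2.13; the theorem is essentially a corollary of Theorem~2.11 specialized to characteristic-function pairs.
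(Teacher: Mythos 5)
Your proposal is correct and follows essentially the same route as the paper's own proof: both deduce the crisp bijection from the intuitionistic fuzzy bijection (Theorem 2.10 in the paper's numbering) by restricting to characteristic-function pairs, with Lemmas 2.11 and 2.12 serving as the translation between $I^{\ast^{\prime}}$, $I^{\ast}$ and $(\chi_I,\chi^c_I)^{\ast^{\prime}}$, $(\chi_I,\chi^c_I)^{\ast}$. The only differences are cosmetic --- you organize bijectivity as the two round-trip identities $(I^{\ast^{\prime}})^{\ast}=I$ and $(J^{\ast})^{\ast^{\prime}}=J$ where the paper argues injectivity and surjectivity separately, and your internal numbering of the auxiliary results is shifted relative to the paper's (the fuzzy bijection is Theorem 2.10, and the $\ast$-version of the characteristic-function lemma is Lemma 2.11, not 2.13).
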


\begin{proof}
Let us denote the mapping $I\rightarrow I^{{\ast}^{^{\prime}}}$ by $\phi$.
This is actually a mapping follows from Proposition $2.8$. Now let
$\phi(I_{1})=\phi(I_{2})$. Then $I_{1}^{{\ast}^{^{\prime}}}=I_{2}^{{\ast
}^{^{\prime}}}.$ This implies that $(\chi_{I_{1}^{\ast^{\prime}}},\chi^{c}_{I_{1}^{\ast^{\prime}}})=(\chi_{I_{2}^{\ast^{\prime}}},\chi^{c}_{I_{2}^{\ast^{\prime}}})($where $\chi_{I}$ is
the characteristic function $I).$ Hence by Lemma $2.12,(\chi_{I_{1}%
},\chi^{c}_{I_{1}%
})^{{\ast}^{^{\prime}}}=(\chi_{I_{2}},\chi^{c}_{I_{2}})^{{\ast}^{^{\prime}}}$. This together
with Theorem $2.10,$ gives $(\chi_{I_{1}},\chi^{c}_{I_{1}})=(\chi_{I_{2}},\chi^{c}_{I_{2}})$ whence
$I_{1}=I_{2}$. Consequently $\phi$ is one-one. Let $I$ be a $I(R)(LI(R)).$ Then $(\chi_{I},\chi^{c}_{I})$ is an $IFI(R)(IFLI(R)).$ Hence by Theorem $2.10,$ $((\chi_{I},\chi^{c}_{I})^{\ast})^{{\ast
}^{^{\prime}}}=(\chi_{I},\chi^{c}_{I})$. This implies that $(\chi_{(I^{\ast})^{{\ast
}^{^{\prime}}}},\chi^{c}_{(I^{\ast})^{{\ast
}^{^{\prime}}}})=(\chi_{I},\chi^{c}_{I})$ $(cf$. Lemma $2.11$ and Lemma $2.12).$ Hence
$(I^{\ast})^{{\ast}^{^{\prime}}}=I,i.e.,$ $\phi(I^{\ast})=I$. Now since
$I^{\ast}$ is a $I(S)(LI(S))(cf$. Proposition $2.3),$ it
follows that $\phi$ is onto. Let $I_{1},I_{2}$ be two $I(S)(LI(S))$
with $I_{1}\subseteq I_{2}.$ Then $\chi_{I_{1}}\subseteq\chi_{I_{2}}$ and $\chi^{c}_{I_{1}}\supseteq\chi^{c}_{I_{2}}$.
Hence by Theorem $2.10,$ we see that $(\chi_{I_{1}})^{{\ast}^{^{\prime}}%
}\subseteq(\chi_{I_{2}})^{{\ast}^{^{\prime}}}$ and $(\chi^{c}_{I_{1}})^{{\ast}^{^{\prime}}%
}\supseteq(\chi^{c}_{I_{2}})^{{\ast}^{^{\prime}}}$ $i.e.,$ $\chi
_{I_{1}^{{\ast}^{^{\prime}}}}\subseteq\chi_{I_{2}^{{\ast}^{^{\prime}}}}$ and $\chi^{c}
_{I_{1}^{{\ast}^{^{\prime}}}}\supseteq\chi^{c}_{I_{2}^{{\ast}^{^{\prime}}}}$
$(cf$. Lemma $2.12)$ which gives $I_{1}^{{\ast}^{^{\prime}}}\subseteq
I_{2}^{{\ast}^{^{\prime}}}.$\newline
\end{proof}

\begin{remark}
Now by using a similar argument as above and with the help of lemmas dual to the lemmas $2.11,2.12(cf.$ Remark $2)$ and Theorem $2.9$ we deduce that the mapping $()^{+^{^{\prime}}}$ is an inclusion preserving bijection$($with $()^{+}$ as the inverse$)$ between the set of all $I(S)(RI(S))$ and that of its left operator semigroup $L.$
\end{remark}

In what follows $S$ denotes a $\Gamma$-semigroup not necessarily with unities, $L,$ $R$ be its left and right operator semigroups respectively.

\begin{proposition}
$\cite{D1,S3}$ Let $S$ be a $\Gamma$-semigroup and $R$ be its right operator semigroup. If $P$ is $PI(R)(SPI(R))$\footnote{$PI(R),PI(S),SPI(R),SPI(S),IFPI(R),IFPI(S),IFSPI(R),IFSPI(S)$ respectively denote prime ideal(s) of $R,$ prime ideal(s) of $S,$ semiprime ideal(s) of $R,$ semiprime ideal(s) of $S,$ intuitionistic fuzzy prime ideal(s) of $R,$ intuitionistic fuzzy prime ideal(s) of $S,$ intuitionistic fuzzy semiprime ideal(s) of $R,$ intuitionistic fuzzy semiprime ideal(s) of $S.$} then $P^{\ast}$ is $PI(S)(SPI(S)).$
\end{proposition}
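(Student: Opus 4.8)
The plan is to prove the prime case directly from the definitions of prime ideal and of the $()^{\ast}$ operation on subsets, using the fact (recalled in the excerpt) that $()^{\ast}$ carries $I(R)$ to $I(S)$ so that $P^{\ast}$ is already known to be an ideal of $S$; only primeness remains. Recall $P^{\ast}=\{s\in S:[\alpha,s]\in P\ \forall\alpha\in\Gamma\}$. First I would take ideals $A,B$ of $S$ with $A\Gamma B\subseteq P^{\ast}$ and aim to show $A\subseteq P^{\ast}$ or $B\subseteq P^{\ast}$. The natural move is to pass to $R$ via the map $P\mapsto P^{\ast^{\prime}}$ (or rather to use $A^{\ast^{\prime}},B^{\ast^{\prime}}$ in $R$), exploiting the bijection of Theorem $2.13$ and the identity $(A^{\ast^{\prime}})^{\ast}=A$, $(B^{\ast^{\prime}})^{\ast}=B$, together with $(P)=(P^{\ast})^{\ast^{\prime}}$ when $P$ is an ideal of $R$ — here one must be a little careful since $S$ is now not assumed to have unities, so the clean bijection of Theorem $2.13$ is unavailable and the argument has to be done by hand at the element level.

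Concretely, I would argue: suppose $A\not\subseteq P^{\ast}$ and $B\not\subseteq P^{\ast}$, and derive a contradiction with $P$ prime. Pick $a\in A\setminus P^{\ast}$ and $b\in B\setminus P^{\ast}$, so there are $\alpha,\beta\in\Gamma$ with $[\alpha,a]\notin P$ and $[\beta,b]\notin P$. Consider the principal ideals (or the ideals $R[\alpha,a]R$, $R[\beta,b]R$) generated in $R$ by these elements; their product lies in $P$ provided I can show, for arbitrary $[\gamma,s],[\delta,t]\in R$, that the element $[\gamma,s][\alpha,a][\delta,t][\beta,b]$ lies in $P$, equivalently that $\,\gamma s\alpha a\delta t\beta b\,$-type expressions land in $P^{\ast}$. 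This is where the hypothesis $A\Gamma B\subseteq P^{\ast}$ gets used: the relevant products can be rewritten so that a factor from $A\Gamma$ and a factor from $B$ appear, forcing membership in $P^{\ast}$, hence the corresponding bracket lies in $P$. Then primeness of $P$ in $R$ forces $[\alpha,a]\in P$ or $[\beta,b]\in P$, contradicting the choice of $a,b$.

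For the semiprime case the argument is the parallel specialization: assume $A$ is an ideal of $S$ with $A\Gamma A\subseteq P^{\ast}$ and show $A\subseteq P^{\ast}$, by taking $a\in A$, translating $a\alpha a$-type products into $P^{\ast}$, lifting to brackets in $R$ to conclude $[\alpha,a]\Gamma$-products lie in $P$, and invoking semiprimeness of $P$ in $R$ to get $[\alpha,a]\in P$ for all $\alpha$, i.e. $a\in P^{\ast}$.

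The main obstacle I anticipate is the bookkeeping in the lifting step without the hypothesis of unities: one must verify that for every $[\alpha,a]\notin P$ and every $[\beta,b]\notin P$ the entire ideal-product $(\text{ideal of }[\alpha,a])(\text{ideal of }[\beta,b])$ sits inside $P$, and this requires translating each generator of those products into an element of $S$ of the form lying in $A\Gamma B$ (resp. $A\Gamma A$), which means carefully inserting the $\Gamma$-factor in the right slot and using that $A,B$ are two-sided ideals of $S$ to absorb the extra $S$- and $\Gamma$-factors. If one does have unities available, this is exactly where Theorem $2.13$ and Lemmas $2.11$–$2.12$ short-circuit the computation — so a cleaner alternative write-up, when $S$ has unities, is: $P$ prime in $R$ $\Rightarrow$ $\chi_P=\chi_{(P^{\ast})^{\ast^{\prime}}}$, and primeness transfers along the inclusion-preserving bijection of Theorem $2.13$ because that bijection respects products of ideals; I would present the general (no unities) element-chase as the primary proof and remark that the unital case also follows from Theorem $2.13$.
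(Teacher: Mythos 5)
The paper offers no proof of this proposition to compare against: it is quoted verbatim from \cite{D1,S3} (Dutta--Adhikari and Sardar--Majumder), where it is a crisp-ideal result about operator semigroups, so your blind attempt is the only argument on the table. Judged on its own merits, your plan is sound and essentially complete in outline. The engine of the element chase is the identity $[\delta,a][\gamma,b]=[\delta a\gamma,b]=[\delta,a\gamma b]$ in $R$, which converts any product $[\alpha,a]\,w\,[\beta,b]$ with $w=[\gamma,s]\in R$ into $[\alpha,\,a\gamma(s\beta b)]$; since $s\beta b\in B$ ($B$ a left ideal) this lands in $[\alpha, A\Gamma B]\subseteq[\alpha,P^{\ast}]\subseteq P$, outer factors are then absorbed because $P$ is a two-sided ideal of $R$, and primeness of $P$ applied to the principal ideals of $[\alpha,a]$ and $[\beta,b]$ yields the contradiction exactly as you describe; the semiprime case is the same computation with $A=B$. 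Two small points you should make explicit. First, you lean on Proposition 2.3 for the fact that $P^{\ast}$ is an ideal of $S$, but that proposition is stated for $\Gamma$-semigroups \emph{with unities}, while the present statement drops that hypothesis; fortunately the same identity gives a one-line direct verification ($[\alpha,t\gamma s]=[\alpha,t][\gamma,s]\in RP\subseteq P$ and $[\alpha,s\gamma t]=[\alpha,s][\gamma,t]\in PR\subseteq P$), so you should include that check rather than cite 2.3. Second, your closing remark that the unital case follows from Theorem 2.13 because the bijection ``respects products of ideals'' is plausible but unproven as stated; since your element-level argument already covers all cases, it is better left as an aside or dropped.
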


\begin{proposition}
$\cite{D1,S3}$ Let $S$ be a $\Gamma$-semigroup and $R$ be its right operator semigroup. If $Q$ is $PI(S)(SPI(S))$ then $Q^{\ast^{^{\prime}}}$ is $PI(R)(SPI(R)).$
\end{proposition}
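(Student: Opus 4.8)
The plan is to argue directly and element-wise, using the realization of $R$ as right translations of $S$, rather than going through the bijection theorems of this section (Theorems~$2.9$, $2.10$ and~$2.13$), which are only available when $S$ has unities. The two facts I would rely on are that $[\alpha,a]\in Q^{\ast^{\prime}}$ holds precisely when $s\alpha a\in Q$ for every $s\in S$, and that a product $[\alpha,a][\beta,b]$ in $R$ acts on $s\in S$ by $s\mapsto(s\alpha a)\beta b$; both statements depend only on the sets $\{s\alpha a:s\in S\}$, so they are well defined on $\rho$-classes. From the first fact I would first record — without using unities, hence without invoking Proposition~$2.4$ — that $Q^{\ast^{\prime}}$ is an ideal of $R$: for $[\alpha,a]\in Q^{\ast^{\prime}}$ and $[\beta,b]\in R$ the element $[\beta,b][\alpha,a]$ sends $s$ to $(s\beta b)\alpha a\in Q$ (membership in $Q^{\ast^{\prime}}$ demands $r\alpha a\in Q$ for \emph{every} $r\in S$, in particular for $r=s\beta b$), while $[\alpha,a][\beta,b]$ sends $s$ to $(s\alpha a)\beta b\in Q\Gamma S\subseteq Q$ since $Q$ is an ideal; also $Q^{\ast^{\prime}}\neq\emptyset$, as $[\alpha,x]\in Q^{\ast^{\prime}}$ whenever $x\in Q$. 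Finally $Q^{\ast^{\prime}}\neq R$, because $Q$ proper and prime forces $S\Gamma S\not\subseteq Q$ (otherwise primeness applied to the ideal $S$ gives $S\subseteq Q$), so some $s_0\gamma_0 x_0\notin Q$ and hence $[\gamma_0,x_0]\notin Q^{\ast^{\prime}}$.

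For the primeness of $Q^{\ast^{\prime}}$ I would use the standard characterization: an ideal $M$ of a semigroup $T$ is prime iff for all $x,y\notin M$ one has $xy\notin M$ or there is $z\in T$ with $xzy\notin M$; so it suffices to produce such a $z\in R$. Let $x=[\alpha,a]$ and $y=[\beta,b]$ lie outside $Q^{\ast^{\prime}}$, and pick $u,v\in S$ with $u\alpha a\notin Q$ and $v\beta b\notin Q$. The analogous characterization of primeness of $Q$ in $S$, applied to $u\alpha a$ and $v\beta b$, yields either $\gamma\in\Gamma$ with $(u\alpha a)\gamma(v\beta b)\notin Q$, or $\gamma,\delta\in\Gamma$ and $w\in S$ with $(u\alpha a)\gamma w\delta(v\beta b)\notin Q$.

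In the first alternative I put $z=[\gamma,v]\in R$, in the second $z=[\gamma,w\delta v]\in R$. Then $xzy$ acts on $s\in S$ by $s\mapsto s\alpha a\gamma v\beta b$, respectively $s\mapsto s\alpha a\gamma w\delta v\beta b$, and substituting $s=u$ shows this image is not wholly contained in $Q$; hence $xzy\notin Q^{\ast^{\prime}}$, and $Q^{\ast^{\prime}}$ is prime. The semiprime case is handled the same way with $y=x$ and $v=u$: semiprimeness of $Q$ gives, for $u\alpha a\notin Q$, either $\gamma\in\Gamma$ with $(u\alpha a)\gamma(u\alpha a)\notin Q$ or $\gamma,\delta\in\Gamma$, $w\in S$ with $(u\alpha a)\gamma w\delta(u\alpha a)\notin Q$, and the corresponding $z$ (now $[\gamma,u]$ or $[\gamma,w\delta u]$) witnesses $xzx\notin Q^{\ast^{\prime}}$.

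The step I expect to cause the most friction is the choice of $z$: one must see that the extra factor $w\delta v$ produced on the $S$-side has to be absorbed into the $S$-component of the operator $z\in R$, with the outer factors $a$ and $b$ left inside $x$ and $y$; and one must not overlook the degenerate branch of the characterization, where no genuine middle element is produced on the $S$-side. The remaining points — that $Q^{\ast^{\prime}}$ is a nonempty proper ideal, and the dictionary between membership in $Q^{\ast^{\prime}}$ and the action of $R$ on $S$ — are routine.
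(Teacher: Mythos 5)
The first thing to say is that the paper does not prove this proposition: it is imported verbatim from \cite{D1} and \cite{S3} (together with Proposition $2.14$) precisely so that Propositions $2.16$, $2.17$ and Theorem $2.18$ can be obtained by passing through level sets. So there is no in-paper proof to compare yours against; you have supplied a self-contained argument where the authors supply a citation. On its own terms your argument is sound. The dictionary between $[\alpha,a]$ and the translation $s\mapsto s\alpha a$ is well defined on $\rho$-classes, the verification that $Q^{\ast^{\prime}}$ is a nonempty proper two-sided ideal of $R$ correctly avoids any appeal to unities (which is the point, since this proposition lives in the ``$S$ not necessarily with unities'' part of the paper), and the choice of middle operator works: with $z=[\gamma,v]$ or $z=[\gamma,w\delta v]$ one computes that $xzy$ sends $u$ to $(u\alpha a)\gamma(v\beta b)$, respectively $(u\alpha a)\gamma w\delta(v\beta b)$, which is the element of $S\setminus Q$ produced by the primeness of $Q$; and $w\delta v$ is indeed a legitimate second coordinate of an element of $R$.

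The one caveat is that your proof is pinned to the ideal-theoretic notion of primeness ($A\Gamma B\subseteq Q$ implies $A\subseteq Q$ or $B\subseteq Q$), via the characterization ``$xy\notin M$ or $xzy\notin M$ for some $z$.'' That is the notion used in \cite{D1}. But in the places where the present paper actually unwinds its definition of $PI$ --- the proof of Theorem $2.18$, where $\mu_{B}(s\gamma x)=\max\{\mu_{B}(s),\mu_{B}(x)\}$ is invoked, and the converse half of Theorem $2.31$ --- the element-wise (``completely prime'') condition is what is used, and for that notion your characterization lemma is not the correct equivalent: one must show $xy\notin Q^{\ast^{\prime}}$ outright. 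Fortunately that version is shorter, not harder: with $u\alpha a\notin Q$ and $v\beta b\notin Q$, the element $xy$ sends $u$ to $(u\alpha a)\beta b$, and if this lay in $Q$ then either $u\alpha a\in Q$ (false) or $b\in Q$, the latter forcing $v\beta b\in S\Gamma Q\subseteq Q$ (false); so $xy\notin Q^{\ast^{\prime}}$ with no middle element needed. You should state explicitly which definition of $PI(S)$ and $PI(R)$ you are working with and, if it is the completely prime one, replace the characterization step by this direct computation; everything else in your write-up survives unchanged.
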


\begin{proposition}
If $A=(\mu_{A},\nu_{A})\in IFPI(R)(IFSPI(R))$, then $A^{\ast}=(\mu_{A}^{\ast},\nu_{A}^{\ast})\in IFPI(S)($resp. $IFSPI(S))$.
\end{proposition}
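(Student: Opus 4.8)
The plan is to transfer primeness (resp. semiprimeness) from $A$ to $A^{\ast}$ through level sets, exactly in the spirit of the proof of Proposition $2.7$. Since Proposition $2.7$ already gives $A^{\ast}=(\mu_{A}^{\ast},\nu_{A}^{\ast})\in IFI(S)$, the only thing left is to verify the prime (resp. semiprime) condition on $A^{\ast}$.

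First I would recall the level-set characterisation of intuitionistic fuzzy prime (semiprime) ideals from \cite{S5}: an $IFI(R)$ $A=(\mu_{A},\nu_{A})$ is an $IFPI(R)$ (resp. $IFSPI(R)$) precisely when each non-empty level set $U(\mu_{A};t)$ and $L(\nu_{A};t)$ is a prime (resp. semiprime) ideal of $R$ for every $t\in[0,1]$. Applying the ``only if'' part of this equivalence, all such level sets of $A$ are $PI(R)$ (resp. $SPI(R)$).

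Next I would feed these level sets into Proposition $2.17$ (the two-sided ideal version cited from \cite{D1,S3}, which says that $PI(R)(SPI(R))$ pushes forward to $PI(S)(SPI(S))$ under $()^{\ast}$): hence $[U(\mu_{A};t)]^{\ast}$ and $[L(\nu_{A};t)]^{\ast}$ are $PI(S)$ (resp. $SPI(S)$) whenever non-empty. Then Proposition $2.5$ identifies these with $U((\mu_{A})^{\ast};t)$ and $L((\nu_{A})^{\ast};t)$; the requisite non-emptiness is handled verbatim as in the proof of Proposition $2.7$ (choose $[\alpha,m]\in R$ with $0<\mu_{A}([\alpha,m])+\nu_{A}([\alpha,m])\le 1$, put $t:=\mu_{A}([\alpha,m])=\nu_{A}([\alpha,m])$, and chase the inclusions to see the relevant level sets are non-empty). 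Thus every non-empty level set of $A^{\ast}$ is a prime (resp. semiprime) ideal of $S$, and the ``if'' part of the \cite{S5} characterisation yields $A^{\ast}\in IFPI(S)$ (resp. $IFSPI(S)$). The semiprime case is obtained by replacing ``prime'' with ``semiprime'' throughout, since Propositions $2.5$, $2.7$ and $2.17$ all come in a semiprime flavour.

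The main obstacle I anticipate is bookkeeping rather than conceptual: one must be sure the characterisation in \cite{S5} is genuinely an equivalence at the level-set level for both the prime and the semiprime cases, and must treat with care the degenerate level sets (an empty level set, or an improper one equal to the whole of $R$ or $S$) so that the hypotheses of Proposition $2.17$ and of the \cite{S5} characterisation are legitimately met — precisely the kind of non-emptiness/properness check already performed in Proposition $2.7$. Once that is in place the argument closes cleanly.
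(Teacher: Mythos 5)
Your proposal follows essentially the same route as the paper's own proof: invoke Proposition 2.7 for the $IFI(S)$ part, pass to level sets, push the crisp prime (semiprime) ideals forward via the $()^{\ast}$ map for crisp ideals, identify $[U(\mu_{A};t)]^{\ast}$ with $U((\mu_{A})^{\ast};t)$ via Proposition 2.5, and conclude by the level-set characterisation from \cite{S5}. The only slips are cosmetic: the crisp push-forward result you want is Proposition 2.14 (not 2.17, which is the intuitionistic fuzzy $()^{\ast^{\prime}}$ statement), and your extra care about non-emptiness is a refinement the paper itself omits.
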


\begin{proof}
Let $A=(\mu_{A},\nu_{A})\in IFPI(R).$ Then it is in $IFI(R).$ Hence by Proposition $2.7,$ $A^{\ast}=(\mu_{A}^{\ast},\nu_{A}^{\ast})\in IFI(S).$ Since $A=(\mu_{A},\nu_{A})\in IFPI(R),$ so $U(\mu_{A};t)$ and $L(\mu_{A};t)$ are $PI(R).$ Now by Proposition $2.14,$ for all $t\in [0,1],$ $[U(\mu_{A};t)]^{\ast}$ and $[L(\nu_{A};t)]^{\ast}$ are $PI(S).$ By Proposition $2.5,$ $[U(\mu_{A};t)]^{\ast}=U((\mu_{A})^{\ast};t)$ and $[L(\nu_{A};t)]^{\ast}=L((\nu_{A})^{\ast};t).$ So $U((\mu_{A})^{\ast};t)$ and $L((\nu_{A})^{\ast};t)$ are $PI(S).$ Hence $A^{\ast}=(\mu^{\ast}_{A},\nu^{\ast}_{A})\in IFPI(S).$ Similarly we can prove the other case also.
\end{proof}

In a similar fashion by using Propositions $2.6,2.8$ and Theorem $3.9\cite{S5},3.10\cite{S5}$ we deduce the following proposition.

\begin{proposition}
If $B=(\mu_{B},\nu_{B})\in IFPI(S)(IFSPI(S)),$ then $B^{\ast^{^{\prime}}}=(\mu_{B}^{\ast^{^{\prime}}},\nu_{B}^{\ast^{^{\prime}}})\in IFPI(R).$
\end{proposition}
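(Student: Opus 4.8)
The plan is to mimic exactly the pattern used in Proposition $2.18$, but with the roles of $S$ and $R$ (and of $()^{\ast}$ versus $()^{\ast^{\prime}}$) interchanged, invoking the right-operator analogues already available in the excerpt. First I would observe that if $B=(\mu_B,\nu_B)\in IFPI(S)$ then in particular $B\in IFI(S)$, so by Proposition $2.8$ we already know $B^{\ast^{\prime}}=(\mu_B^{\ast^{\prime}},\nu_B^{\ast^{\prime}})\in IFI(R)$. Thus only the ``prime'' (resp. ``semiprime'') part of the conclusion remains to be checked, and the natural vehicle is the level-set characterization of intuitionistic fuzzy prime ideals (the analogue of Theorems $3.9$--$3.10$ of \cite{S5} already cited in the excerpt), combined with Proposition $2.16$ on crisp ideals and Proposition $2.6$ relating level sets to the $()^{\ast^{\prime}}$ operation.

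The key steps, in order, would be: (1) Since $B\in IFPI(S)$, for every $t\in[0,1]$ the level sets $U(\mu_B;t)$ and $L(\nu_B;t)$ are prime ideals of $S$ (whenever non-empty). (2) By Proposition $2.16$, $[U(\mu_B;t)]^{\ast^{\prime}}$ and $[L(\nu_B;t)]^{\ast^{\prime}}$ are then prime ideals of $R$. (3) By Proposition $2.6$, $[U(\mu_B;t)]^{\ast^{\prime}}=U((\mu_B)^{\ast^{\prime}};t)$ and $[L(\nu_B;t)]^{\ast^{\prime}}=L((\nu_B)^{\ast^{\prime}};t)$, so all level sets of $B^{\ast^{\prime}}$ are prime ideals of $R$. (4) Conclude via the level-set characterization (Theorems $3.9$, $3.10$ of \cite{S5}) that $B^{\ast^{\prime}}\in IFPI(R)$. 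For the semiprime case one repeats the argument verbatim with ``prime'' replaced by ``semiprime'' throughout, using the $SPI$ half of Proposition $2.16$. Since the whole derivation is stated to be ``in a similar fashion'' to Proposition $2.18$, the write-up can be kept to a few lines.

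The one point that deserves a moment's care — and is the closest thing to an obstacle — is the non-emptiness bookkeeping: the equivalences between level sets and the $()^{\ast^{\prime}}$-operation in Proposition $2.6$ are stated ``provided the sets under consideration are non-empty'', and primeness/semiprimeness of an ideal is only meaningful for a proper (non-empty, and for primeness typically proper) ideal. I would handle this exactly as in the proof of Proposition $2.7$: pick $[\alpha,m]\in R$ (equivalently some $s\in S$) with $0<\mu_B(\cdot)+\nu_B(\cdot)\le 1$, set $t$ to the relevant value, and verify the level sets in play are non-empty, so that Proposition $2.6$ applies. Once that routine check is in place, steps (1)--(4) go through mechanically, and the proof closes by citing the appropriate results from \cite{S5}. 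Note the statement as printed only claims $B^{\ast^{\prime}}\in IFPI(R)$; I would also record the parenthetical $IFSPI(S)\Rightarrow IFSPI(R)$ conclusion, exactly parallel, to match the phrasing of Proposition $2.18$.
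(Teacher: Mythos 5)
Your proposal is correct and is exactly the argument the paper intends: the paper prints no proof for this proposition, only the remark that it follows ``in a similar fashion'' to the preceding one (Proposition 2.16) using Propositions 2.6, 2.8 and Theorems 3.9--3.10 of \cite{S5}, which is precisely your route --- pass to level sets, apply the crisp result $Q\in PI(S)\Rightarrow Q^{\ast^{\prime}}\in PI(R)$ (Proposition 2.15 in the paper's numbering, not 2.16), identify $[U(\mu_{B};t)]^{\ast^{\prime}}=U((\mu_{B})^{\ast^{\prime}};t)$ and $[L(\nu_{B};t)]^{\ast^{\prime}}=L((\nu_{B})^{\ast^{\prime}};t)$ via Proposition 2.6, and conclude by the level-set characterization. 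Your explicit non-emptiness bookkeeping, modelled on the proof of Proposition 2.7, is if anything more careful than the paper itself.
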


\begin{remark}
We can also deduce the following left operator analogues of Propositions $2.14$-$2.17$.
\end{remark}

The following theorem is on the inclusion preserving bijection between the set of all $IFPI(S)$ and the set of all $IFPI(R).$ It may be noted that $S$ need not have unities here which was the case for the set of all $IFI(cf.$ Theorems $2.9, 2.10).$

\begin{theorem}
Let $S$ be a $\Gamma$-semigroup and $R$ be its right operator semigroup. Then there exist an inclusion preserving bijection $B=(\mu_{B},\nu_{B})\mapsto B^{\ast^{^{\prime}}}=(\mu^{\ast^{'}}_{B},\nu^{\ast^{'}}_{B})$ between the set of all $IFPI(S)(IFSPI(S))$ and set of all $IFPI(R)($resp. $IFSPI(R)).$
\end{theorem}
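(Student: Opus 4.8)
The plan is to establish the bijection by combining the two ingredients already available: the ordinary (crisp) ideal bijection $I \mapsto I^{\ast^{\prime}}$ from Theorem~2.13 and its dual (Remark~3), together with the fact that $()^{\ast^{\prime}}$ and $()^{\ast}$ send intuitionistic fuzzy prime/semiprime ideals to intuitionistic fuzzy prime/semiprime ideals (Propositions~2.16 and 2.17, and Remark~4 for the left-operator analogues). First I would verify that $B \mapsto B^{\ast^{\prime}}$ is well defined on the relevant class: this is exactly Proposition~2.17 (and its $IFSPI$ part), so $B^{\ast^{\prime}} \in IFPI(R)$ whenever $B \in IFPI(S)$. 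Dually, $A \mapsto A^{\ast}$ maps $IFPI(R)$ into $IFPI(S)$ by Proposition~2.16.

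Next I would show the two maps are mutually inverse on these classes. For this I do \emph{not} want to invoke Theorems~2.9/2.10, since those required $S$ to have unities, and the present statement explicitly drops that hypothesis. Instead I would argue at the level of level sets. Given $B=(\mu_B,\nu_B)\in IFPI(S)$, every level set $U(\mu_B;t)$ and $L(\nu_B;t)$ is a prime ideal of $S$ (here I would recall from \cite{S5} the characterization of $IFPI$ via level sets). By Proposition~2.6, $[U(\mu_B;t)]^{\ast^{\prime}}=U((\mu_B)^{\ast^{\prime}};t)$ and $[L(\nu_B;t)]^{\ast^{\prime}}=L((\nu_B)^{\ast^{\prime}};t)$, and by Proposition~2.5 applied to $B^{\ast^{\prime}}$, $[U((\mu_B)^{\ast^{\prime}};t)]^{\ast}=U(((\mu_B)^{\ast^{\prime}})^{\ast};t)$, and similarly on the $\nu$ side. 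Chaining these, the level sets of $(B^{\ast^{\prime}})^{\ast}$ are exactly $([U(\mu_B;t)]^{\ast^{\prime}})^{\ast}$ and $([L(\nu_B;t)]^{\ast^{\prime}})^{\ast}$. Now I would appeal to the crisp statement that $(P^{\ast^{\prime}})^{\ast}=P$ for a prime (or semiprime) ideal $P$ of $S$ — this is the content underlying Theorem~2.13 and, crucially, holds for prime ideals \emph{without} the unity assumption (this is where the "need not have unities" remark in the paper is paying off; the relevant crisp fact is in \cite{D1,S3}). Hence every level set of $(B^{\ast^{\prime}})^{\ast}$ equals the corresponding level set of $B$, which forces $(B^{\ast^{\prime}})^{\ast}=B$. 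The symmetric computation, using $(A^{\ast})^{\ast^{\prime}}=A$ for prime ideals $A$ of $R$, gives $(A^{\ast})^{\ast^{\prime}}=A$ for all $A\in IFPI(R)$. This shows $B\mapsto B^{\ast^{\prime}}$ is a bijection with inverse $A\mapsto A^{\ast}$.

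Finally, inclusion-preservation is routine and I would dispatch it directly from the definitions, exactly as in the last paragraph of the proof of Theorem~2.9: if $C=(\mu_C,\nu_C)\subseteq D=(\mu_D,\nu_D)$, i.e. $\mu_C\subseteq\mu_D$ and $\nu_C\supseteq\nu_D$, then for each $[\alpha,a]\in R$ we have $\inf_{s\in S}\mu_C(s\alpha a)\le \inf_{s\in S}\mu_D(s\alpha a)$ and $\sup_{s\in S}\nu_C(s\alpha a)\ge \sup_{s\in S}\nu_D(s\alpha a)$, so $C^{\ast^{\prime}}\subseteq D^{\ast^{\prime}}$; the same one-line estimate with $\inf_{\gamma\in\Gamma}$ shows $()^{\ast}$ is inclusion-preserving, so the bijection and its inverse both preserve inclusion. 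The semiprime case runs verbatim, replacing "prime ideal" by "semiprime ideal" throughout and citing the $IFSPI$ halves of Propositions~2.16 and~2.17.

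I expect the main obstacle to be pinning down precisely the crisp identities $(P^{\ast^{\prime}})^{\ast}=P$ and $(A^{\ast})^{\ast^{\prime}}=A$ for prime/semiprime ideals in the \emph{no-unity} setting, and confirming that they really are available from \cite{D1,S3} rather than being special to the unital case; once that is in hand, the level-set translation via Propositions~2.5 and~2.6 makes everything else mechanical. An alternative, if one wishes to avoid relying on an unstated crisp lemma, is to prove $(B^{\ast^{\prime}})^{\ast}=B$ directly: the inequality $B\subseteq (B^{\ast^{\prime}})^{\ast}$ is immediate from the definitions, and the reverse inequality uses the primeness of $B$ together with the fact that for each $a\in S$ one can exhibit, for any $\gamma\in\Gamma$, elements whose product recovers enough of $a$ to force $\mu_B(a)\ge \inf_{\gamma}\inf_{s}\mu_B(s\gamma a)$ — this is the step that genuinely consumes the prime hypothesis and would need the corresponding crisp argument of \cite{D1,S3} adapted to membership functions.
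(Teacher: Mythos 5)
Your proposal is correct, but your main route differs genuinely from the paper's. The paper proves $(B^{\ast^{\prime}})^{\ast}=B$ and $(A^{\ast})^{\ast^{\prime}}=A$ by direct computation with the membership functions: the inequality $B\subseteq(B^{\ast^{\prime}})^{\ast}$ comes from the ideal property, and the reverse inequality comes from the \emph{functional} form of primeness, $\mu_{B}(s\gamma x)=\max\{\mu_{B}(s),\mu_{B}(x)\}$ and $\nu_{B}(s\gamma x)=\min\{\nu_{B}(s),\nu_{B}(x)\}$, which collapses $\inf_{s}\max\{\mu_{B}(s),\mu_{B}(x)\}\le\mu_{B}(x)$ (and, on the operator side, specializes $[\beta,s]$ to $[\alpha,x]$). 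This is exactly the ``alternative'' you sketch in your last paragraph, so the step you flag as the one that ``genuinely consumes the prime hypothesis'' is precisely where the paper spends it. Your primary argument instead descends to level sets via Propositions 2.5 and 2.6 and invokes the crisp identity $(P^{\ast^{\prime}})^{\ast}=P$ for prime (semiprime) ideals in the non-unital setting; that identity is indeed available from \cite{D1,S3} (it is the substance of Propositions 2.14 and 2.15 together with properness of prime ideals), and reconstructing $\mu$ from $\{U(\mu;t)\}$ then yields $(B^{\ast^{\prime}})^{\ast}=B$. Your route is more modular and correctly avoids Theorems 2.9/2.10 (which need unities), but it has one presentational cost you should note: the paper's stated purpose for this theorem includes re-deriving the crisp bijection for $PI(S)$ (Theorem 2.22) \emph{from} the intuitionistic fuzzy one, so if you prove the fuzzy theorem from the crisp bijection, Theorem 2.22 can no longer be obtained as a new application without circularity. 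Two small points to tighten: you should say explicitly that equality of all upper and lower level sets forces equality of the intuitionistic fuzzy sets (via $\mu(x)=\sup\{t:x\in U(\mu;t)\}$), and you should dispose of the non-emptiness provisos in Propositions 2.5 and 2.6 by observing that empty level sets are sent to empty level sets on both sides.
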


\begin{proof}
Let $B=(\mu_{B},\nu_{B})\in IFPI(R)$ and $x\in S.$ Then
$$
\begin{array}{ll}
(\mu^{\ast^{'}}_{B})^{\ast}(x)&=\underset{\gamma\in\Gamma}{\inf}\mu^{\ast^{'}}_{B}([\gamma,x])=\underset{\gamma\in\Gamma}{\inf}\underset{s\in S}{\inf}\mu_{B}(s\gamma x)\geq\mu_{B}(x)(\text{since }B\in IFI(S)).
\end{array}
$$
$$
\begin{array}{ll}
\text{Again for }x\in S, (\mu^{\ast^{'}}_{B})^{\ast}(x)&=\underset{\gamma\in\Gamma}{\inf}\underset{s\in S}{\inf}\mu_{B}(s\gamma x)=\underset{s\in S}{\inf}\underset{\gamma\in\Gamma}{\inf}\mu_{B}(s\gamma x)\\
&=\underset{s\in S}{\inf}\max\{\mu_{B}(s),\mu_{B}(x)\}(\text{since }B\in IFPI(S))\\
&\leq\max\{\mu_{B}(x),\mu_{B}(x)\}=\mu_{B}(x).
\end{array}
$$
Hence $(\mu^{\ast^{'}}_{B})^{\ast}(x)=\mu_{B}(x).$ Also
$$
\begin{array}{ll}
(\nu^{\ast^{'}}_{B})^{\ast}(x)&=\underset{\gamma\in\Gamma}{\sup}\nu^{\ast^{'}}_{B}([\gamma,x])=\underset{\gamma\in\Gamma}{\sup}\underset{s\in S}{\sup}\mu_{B}(s\gamma x)\leq\nu_{B}(x)(\text{since }B\in IFI(S)).
\end{array}
$$
$$
\begin{array}{ll}
\text{Again for }x\in S, (\nu^{\ast^{'}}_{B})^{\ast}(x)&=\underset{\gamma\in\Gamma}{\sup}\underset{s\in S}{\sup}\nu_{B}(s\gamma x)=\underset{s\in S}{\sup}\underset{\gamma\in\Gamma}{\sup}\nu_{B}(s\gamma x)\\
&=\underset{s\in S}{\sup}\min\{\nu_{B}(s),\nu_{B}(x)\}(\text{since }B\in IFPI(S))\\
&\geq\min\{\nu_{B}(x),\nu_{B}(x)\}=\nu_{B}(x).
\end{array}
$$
Hence $(\nu^{\ast^{'}}_{B})^{\ast}(x)=\nu_{B}(x).$ Consequently, $(B^{\ast^{'}})^{\ast}=B.$ Hence the mapping is one-one. Now let $[\alpha,x]\in R.$ Then
$$
\begin{array}{ll}
(\mu^{\ast}_{B})^{\ast^{'}}([\alpha,x])&=\underset{s\in S}{\inf}\mu^{\ast}_{B}(s\alpha x)=\underset{s\in S}{\inf}\underset{\beta\in\Gamma}{\inf}\mu_{B}([\beta,s\alpha x])\\
&=\underset{s\in S}{\inf}\underset{\beta\in\Gamma}{\inf}\mu_{B}([\beta,s][\alpha,x])\geq\mu_{B}([\alpha,x])
\ \ \ \ \ \ \ \ \ \ \ \ ......................(*_{1}).
\end{array}
$$
$$
\begin{array}{ll}
\text{Also, }(\nu^{\ast}_{B})^{\ast^{'}}([\alpha,x])&=\underset{s\in S}{\sup}\nu^{\ast}_{B}(s\alpha x)=\underset{s\in S}{\sup}\underset{\beta\in\Gamma}{\sup}\nu_{B}([\beta,s\alpha x])\\
&=\underset{s\in S}{\sup}\underset{\beta\in\Gamma}{\sup}\nu_{B}([\beta,s][\alpha,x])\leq\nu_{B}([\alpha,x])
\ \ \ \ \ ......................(**_{1}).
\end{array}
$$
Since $B=(\mu_{B},\nu_{B})\in IFPI(R),$ for all $ \alpha, \beta \in \Gamma,$ for all $x,s \in S.$ $\mu_{B}([\alpha,x][\beta,s])=\max\{\mu_{B}([\alpha,x]),\mu_{B}([\beta,s])\}$ and $\nu_{B}([\alpha,x][\beta,s])=\min\{\nu_{B}([\alpha,x]),\nu_{B}([\beta,s])\}\forall s\in S,\forall\beta\\\in\Gamma .$ Hence for $s=x$ and $\beta=\alpha$ we obtain $\mu_{B}([\alpha,x][\beta,s])=\mu_{B}([\alpha,x])$ and $\nu_{B}([\alpha,x][\beta,s])\\=\nu_{B}([\alpha,x]).$ This together with the relations $(\mu^{\ast}_{B})^{\ast^{'}}([\alpha,x])=\underset{s\in S}{\inf}\underset{\beta\in\Gamma}{\inf}\mu_{B}([\alpha,x][\beta,s])$ and $(\nu^{\ast}_{B})^{\ast^{'}}([\alpha,x])=\underset{s\in S}{\sup}\underset{\beta\in\Gamma}{\sup}\nu_{B}([\alpha,x][\beta,s])$ give $(\mu^{\ast}_{B})^{\ast^{'}}([\alpha,x])\leq\mu_{B}([\alpha,x])..........(*_{2})$ and $(\nu^{\ast}_{B})^{\ast^{'}}([\alpha,x])\geq\nu_{B}([\alpha,x])..........(**_{2}).$ By $(*_{1})$ and $(*_{2})$ we obtain $(\mu^{\ast}_{B})^{\ast^{'}}([\alpha,x])=\mu_{B}([\alpha,x])$ and by $(**_{1})$ and $(**_{2})$ we have $(\nu^{\ast}_{B})^{\ast^{'}}([\alpha,x])=\nu_{B}([\alpha,x]).$ Consequently, $(B^{\ast})^{\ast^{'}}=B.$ Hence the mapping is onto. Inclusion preserving property is similar as in Theorem $2.9.$ Hence $B\mapsto B^{\ast^{^{\prime}}}$ is an inclusion preserving bijection.
\end{proof}

\begin{remark}
$(i)$ Similar results hold for $IFSPI(S).$ $(ii)$ Similar result holds for the $\Gamma$-semigroup $S$ and the left operator semigroup $L$ of $S.$
\end{remark}

\begin{corollary}
Let $S$ be a $\Gamma$-semigroup and $R,L$ be respectively its right and left operator semigroups. Then there exists an inclusion preserving bijection between the set of all $IFPI(R)(IFSPI(R))$ and the set of all $IFPI(L)(IFSPI(L)).$
\end{corollary}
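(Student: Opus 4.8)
The plan is to obtain the asserted bijection as a composition of two bijections already in hand, routed through the $\Gamma$-semigroup $S$ itself. By Theorem $2.18$, the assignment $\Phi\colon B=(\mu_{B},\nu_{B})\mapsto B^{\ast^{^{\prime}}}$ is an inclusion preserving bijection from the set of all $IFPI(S)$ onto the set of all $IFPI(R)$, and from the proof of that theorem its inverse is the map $A\mapsto A^{\ast}$ (there one has $(B^{\ast^{^{\prime}}})^{\ast}=B$ and $(A^{\ast})^{\ast^{^{\prime}}}=A$). Dually, by Remark $5(ii)$ — the left operator analogue of Theorem $2.18$ — the assignment $\Psi\colon B\mapsto B^{+^{^{\prime}}}$ is an inclusion preserving bijection from the set of all $IFPI(S)$ onto the set of all $IFPI(L)$, with inverse $C\mapsto C^{+}$. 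Since both $\Phi$ and $\Psi$ have the same middle object, namely the set of all $IFPI(S)$, the composite is well defined.

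First I would form $\Theta:=\Psi\circ\Phi^{-1}$, i.e. the assignment $A=(\mu_{A},\nu_{A})\mapsto (A^{\ast})^{+^{^{\prime}}}$. This really does send an $IFPI(R)$ to an $IFPI(L)$: $A^{\ast}\in IFPI(S)$ by Proposition $2.16$, and then $(A^{\ast})^{+^{^{\prime}}}\in IFPI(L)$ by the left operator analogue of Proposition $2.17$ (cf. Remark $4$). Being the composition of two bijections, $\Theta$ is a bijection, and its inverse is $C\mapsto (C^{+})^{\ast^{^{\prime}}}$. It then remains only to verify that $\Theta$ preserves inclusion; this is immediate, because $\Phi^{-1}$ (equivalently the map $(\ )^{\ast}$) and $\Psi$ (equivalently the map $(\ )^{+^{^{\prime}}}$) are each inclusion preserving — as recorded inside the proofs of Theorem $2.18$ and its left operator analogue — and the composite of two inclusion preserving maps is inclusion preserving.

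For the semiprime version one argues verbatim, replacing $IFPI$ by $IFSPI$ everywhere and invoking the $IFSPI$ parts of Theorem $2.18$ and of Remarks $4$ and $5$. Since the entire argument is a purely formal composition of maps already established, I do not expect any genuine obstacle here; the only point that needs a moment's care is to make sure the ``inclusion preserving'' clause is actually available for both constituent bijections and their inverses (which it is), so that transitivity of inclusion under composition can be invoked cleanly.
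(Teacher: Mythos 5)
Your proposal is correct and is exactly the argument the paper intends: the corollary is stated immediately after Remark 5 precisely so that it follows by composing the inclusion preserving bijection of Theorem 2.18 (between $IFPI(S)$ and $IFPI(R)$) with its left operator analogue (between $IFPI(S)$ and $IFPI(L)$). Your explicit identification of the composite as $A\mapsto (A^{\ast})^{+^{^{\prime}}}$ with inverse $C\mapsto (C^{+})^{\ast^{^{\prime}}}$, and the remark that inclusion preservation passes through composition, supplies the routine details the paper leaves implicit.
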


\begin{remark}
In view of Theorem $2.18,$ we see that in a $\Gamma$-semigroup $S$ with unities the above result also holds for $IFI.$
\end{remark}

Now we revisit the following theorem which is originally due to Dutta and Adhikari\cite{D1} via intutionistic
fuzzy ideals by using Theorem $2.18$ and applying  similar argument as applied in Theorem $2.13.$

\begin{theorem}
Let $S$ be a $\Gamma$-semigroup. Then there exists an inclusion preserving bijection between the set of all $PI(S)(SPI(S))$ and that of its right operator semigroup $R$ via the mapping $I\rightarrow I^{\ast^{'}}.$
\end{theorem}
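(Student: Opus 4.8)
The plan is to transfer the statement about intuitionistic fuzzy prime (semiprime) ideals established in Theorem~2.18 down to crisp prime (semiprime) ideals, exactly mimicking the passage from Theorem~2.10 to Theorem~2.13. First I would denote the correspondence $I \mapsto I^{\ast^{\prime}}$ by $\phi$ and observe that it is well-defined as a map from $PI(S)(SPI(S))$ into $PI(R)(SPI(R))$ by Proposition~2.15. The three things to check are injectivity, surjectivity, and that $\phi$ together with its inverse preserves inclusion.

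For injectivity, suppose $\phi(I_1)=\phi(I_2)$, i.e. $I_1^{\ast^{\prime}}=I_2^{\ast^{\prime}}$. I would pass to characteristic-function pairs: by the prime/semiprime analogue of Lemma~2.12 (the existence of which is part of what one should note, since Lemma~2.12 only asserts $(\chi_I,\chi_I^c)^{\ast^{\prime}} = (\chi_{I^{\ast^{\prime}}},\chi_{I^{\ast^{\prime}}}^c)$ as a set-theoretic identity, valid whenever $I$ is an ideal of $S$), this gives $(\chi_{I_1},\chi_{I_1}^c)^{\ast^{\prime}} = (\chi_{I_2},\chi_{I_2}^c)^{\ast^{\prime}}$; since $\chi_I$ being the characteristic function of a prime (semiprime) ideal makes $(\chi_I,\chi_I^c)$ an $IFPI(S)$ (resp. $IFSPI(S)$), Theorem~2.18 forces $(\chi_{I_1},\chi_{I_1}^c) = (\chi_{I_2},\chi_{I_2}^c)$, hence $I_1=I_2$. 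For surjectivity, let $I$ be a $PI(R)(SPI(R))$; then $(\chi_I,\chi_I^c)\in IFPI(R)(IFSPI(R))$, so by Theorem~2.18 we have $((\chi_I,\chi_I^c)^{\ast})^{\ast^{\prime}} = (\chi_I,\chi_I^c)$, and translating both sides via Lemma~2.11 and Lemma~2.12 (in their prime/semiprime versions) yields $(\chi_{(I^{\ast})^{\ast^{\prime}}},\chi_{(I^{\ast})^{\ast^{\prime}}}^c) = (\chi_I,\chi_I^c)$, so $(I^{\ast})^{\ast^{\prime}} = I$; since $I^{\ast}$ is a $PI(S)(SPI(S))$ by Proposition~2.14, this shows $\phi(I^{\ast})=I$ and $\phi$ is onto. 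Finally, for the order-preservation, take $I_1\subseteq I_2$ among $PI(S)(SPI(S))$; then $\chi_{I_1}\subseteq\chi_{I_2}$ and $\chi_{I_1}^c\supseteq\chi_{I_2}^c$, Theorem~2.18 gives $(\chi_{I_1},\chi_{I_1}^c)^{\ast^{\prime}} \subseteq (\chi_{I_2},\chi_{I_2}^c)^{\ast^{\prime}}$, i.e. $(\chi_{I_1^{\ast^{\prime}}},\chi_{I_1^{\ast^{\prime}}}^c) \subseteq (\chi_{I_2^{\ast^{\prime}}},\chi_{I_2^{\ast^{\prime}}}^c)$ by Lemma~2.12, hence $I_1^{\ast^{\prime}}\subseteq I_2^{\ast^{\prime}}$; the same computation with $\phi^{-1}=()^{\ast}$ shows the inverse is inclusion-preserving too.

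The main obstacle, such as it is, is not a deep one but a bookkeeping point: Theorem~2.18 was proved only for $IFPI$; its $IFSPI$ version is asserted in Remark~4(i) but without proof, and likewise the crisp Lemmas~2.11 and 2.12 were stated for ideals, so one should either invoke the fact that a prime/semiprime ideal is in particular an ideal (making those set-identities applicable verbatim) or explicitly note that the characteristic-function pair of a prime/semiprime ideal is an $IFPI$/$IFSPI$ — this last equivalence is the real content that makes the translation between crisp and fuzzy work, and it is standard (it amounts to the level-set characterization, with the nontrivial levels being $t=1$ giving back $I$ and $t=0$ giving back $S$). Once that dictionary is in place the proof is a routine transcription of Theorem~2.13's argument with ``$I$'' replaced by ``$PI$'' (or ``$SPI$'') throughout; no new idea is required, and indeed the paper itself signals this by saying the result follows ``applying similar argument as applied in Theorem~2.13.''
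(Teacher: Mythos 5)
Your proposal is correct and follows exactly the route the paper intends: the paper gives no written-out proof for this theorem, stating only that it follows ``by using Theorem 2.18 and applying similar argument as applied in Theorem 2.13,'' and your argument is precisely that transcription (characteristic functions, Lemmas 2.11--2.12 applied to prime/semiprime ideals qua ideals, Propositions 2.14--2.15 for well-definedness and surjectivity, and Theorem 2.18 for the fuzzy-level bijection). Your explicit flagging of the needed dictionary between $PI(S)$ and $IFPI(S)$ via $(\chi_I,\chi_I^c)$, and of the fact that the $IFSPI$ case of Theorem 2.18 rests on Remark 4(i), is a useful bookkeeping point the paper leaves implicit.
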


The definition of an $IFE,$\footnote{$IFE$ denote an intuitionsitic fuzzy extension.} $<x,A>$ of an $IFS,$ $A=(\mu_{A},\nu_{A})$ is given in \cite{S5}. Now by routine verification we obtain the following two propositions.

\begin{proposition}
Let $S$ be a commutative $\Gamma$-semigroup and $L(R)$ the left$($respectively the right$)$ operator semigroups of $S.$ Let $A=(\mu_{A},\nu_{A})$ be an $IFLI(S)(IFRI(S),IFI(S))$ then $<x,A^{+^{'}}>($respectively $<x,A^{\ast^{'}}>)$ is an $IFLI(L(R))(IFRI(L(R)),IFI(L(R)))$ for all $x\in L(R).$
\end{proposition}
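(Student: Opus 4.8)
The plan is to notice that, since $S$ is commutative, the three hypotheses on $A$ coincide and the three conclusions on the operator semigroups coincide, so the proposition reduces to two assertions about the single notion $IFI$; these then follow from the transfer results already established together with the fact that the intuitionistic fuzzy ideal extension of an $IFI$ of a commutative semigroup is again an $IFI$. First I would record the two collapses. If $\mu_A(x\gamma y)\ge\mu_A(y)$ for all $x,y\in S,\ \gamma\in\Gamma$, then by commutativity $\mu_A(x\gamma y)=\mu_A(y\gamma x)\ge\mu_A(x)$, and doing the same with $\nu_A$ shows $IFLI(S)=IFRI(S)=IFI(S)$. Next I would check that $L$ and $R$ are commutative semigroups: for $[x,\alpha],[y,\beta]\in L$ we have $[x,\alpha][y,\beta]=[x\alpha y,\beta]$ and $[y,\beta][x,\alpha]=[y\beta x,\alpha]$, and using only $a\gamma b=b\gamma a$ in $S$ one verifies $(x\alpha y)\beta s=(y\beta x)\alpha s$ for every $s\in S$ (both sides reduce to $x\beta(y\alpha s)$), together with the analogous identity needed for the second clause of $\rho$; the computation for $R$ is symmetric. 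Consequently $IFLI(L)=IFRI(L)=IFI(L)$ and likewise for $R$, so it is enough to treat $A\in IFI(S)$ and to prove $\langle x,A^{+'}\rangle\in IFI(L)$ and $\langle x,A^{\ast'}\rangle\in IFI(R)$.

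By Theorem $2.9$ (equivalently the left operator analogue of Proposition $2.8$ from Remark $1$) we get $A^{+'}\in IFI(L)$, and by Proposition $2.8$ we get $A^{\ast'}\in IFI(R)$. It then remains to show the semigroup lemma: if $T$ is a commutative semigroup, $B=(\mu_B,\nu_B)\in IFI(T)$ and $x\in T$, then $\langle x,B\rangle\in IFI(T)$, where $\mu_{\langle x,B\rangle}(y)=\mu_B(xy)$ and $\nu_{\langle x,B\rangle}(y)=\nu_B(xy)$. Indeed $\mu_{\langle x,B\rangle}(yz)=\mu_B((xy)z)\ge\mu_B(xy)=\mu_{\langle x,B\rangle}(y)$ by the ideal inequality for $\mu_B$, and $\mu_{\langle x,B\rangle}(yz)=\mu_B(x(yz))=\mu_B((xz)y)\ge\mu_B(xz)=\mu_{\langle x,B\rangle}(z)$ after one use of commutativity of $T$; the two inequalities for $\nu_{\langle x,B\rangle}$ are obtained identically with $\ge$ replaced by $\le$, and $\mu_{\langle x,B\rangle}+\nu_{\langle x,B\rangle}\le 1$ is inherited from $B$. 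Applying this with $T=L,\ B=A^{+'}$ and with $T=R,\ B=A^{\ast'}$ settles the two remaining assertions, and by the collapses of the first step this is the full statement.

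The step I expect to be the real obstacle is the commutativity of $L$ and $R$, since the second defining clause of $\rho$ (``$\gamma x\alpha=\gamma y\beta$ for all $\gamma$'') must be read and checked with some care and the whole reduction rests on it. If one prefers to bypass it, the two assertions can be obtained by direct computation instead: for $x=[p,\pi]\in L$ one computes $\mu_{\langle x,A^{+'}\rangle}([b,\beta])=\inf_{s\in S}\mu_A(p\pi b\beta s)$, so the right-ideal inequality over $L$ follows from associativity in $S$ alone, while the left-ideal inequality follows by shifting $p\pi$ using commutativity of $S$ and then applying the ideal property of $A$; and for $R$ one notes the identity $\langle[\pi,p],A^{\ast'}\rangle=C^{\ast'}$, where $C=(\mu_C,\nu_C)$ is the $IFS$ of $S$ given by $\mu_C(z)=\mu_A(p\pi z),\ \nu_C(z)=\nu_A(p\pi z)$ (the two sides agree because $s\pi p=p\pi s$), checks in one line that $C\in IFI(S)$, and invokes Proposition $2.8$. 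A minor routine point in either route is that $\langle x,A^{+'}\rangle$ and $\langle x,A^{\ast'}\rangle$ are genuine non-empty intuitionistic fuzzy subsets, which is guaranteed by the conventions for intuitionistic fuzzy ideal extensions of \cite{S5}.
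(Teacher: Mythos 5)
The paper gives no proof of this proposition at all --- it is introduced with ``by routine verification we obtain the following two propositions'' --- so there is no argument of the authors' to compare yours against; the question is only whether your verification is sound, and it is. Your skeleton (commutativity of $S$ collapses $IFLI$, $IFRI$, $IFI$ on $S$ and, since $L$ and $R$ are then commutative, on the operator semigroups as well; the transfer maps $()^{+^{'}}$ and $()^{\ast^{'}}$ carry $IFI(S)$ into $IFI(L)$ and $IFI(R)$; and the extension of an $IFI$ of a commutative semigroup by an element is again an $IFI$) is in fact the same skeleton the authors use later for Proposition $2.28$, where the last step is outsourced to Proposition $3.2$ of \cite{S4} rather than proved directly as you do. Two points deserve a flag. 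First, Proposition $2.8$, Theorem $2.9$ and Remark $1$ sit under the standing hypothesis that $S$ has unities, whereas the present proposition lives in the portion of the paper where $S$ need not have unities; the two-sided transfer $B\mapsto B^{\ast^{'}}$ (and its $+^{'}$ analogue) does not actually use the unities, and your fallback direct computation $\mu_{<x,A^{+^{'}}>}([b,\beta])=\underset{s\in S}{\inf}\,\mu_{A}(p\pi b\beta s)$ sidesteps the citation entirely, so this is bookkeeping rather than a gap --- but it is worth saying explicitly if you take the first route. Second, the commutativity of $L$ and $R$, which your reduction leans on, is also asserted without proof by the authors (in the proof of Proposition $2.28$), and your check of the first clause of $\rho$ is correct; just note that in the direct route the left-ideal inequality is obtained by moving the factor $c\delta$, not $p\pi$: one has $p\pi c\delta b\beta s=((p\pi b)\beta s)\delta c$ by commutativity of $S$, after which the ideal property of $A$ and an infimum over $s$ give the required bound.
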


\begin{proposition} $($With same notation as in the above proposition$)$ If $B=(\mu_{B},\nu_{B})$ is an $IFLI(L(R))(IFRI(L(R)),IFI(L(R)))$ then $<x,B^{+}>($respectively $<x,B^{\ast}>)$ is an $IFLI(S)(IFRI(S),IFI(S))$ for all $x\in S.$
\end{proposition}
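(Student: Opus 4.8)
The plan is to factor the claim through two observations. First, $B^{+}$ (respectively $B^{\ast}$) is itself an intuitionistic fuzzy ideal of $S$ of the type named in the statement. Second, the intuitionistic fuzzy ideal extension $<x,C>$ of an intuitionistic fuzzy ideal $C$ of the commutative $\Gamma$-semigroup $S$, taken at any $x\in S$, is again an intuitionistic fuzzy ideal of $S$. Applying the second observation with $C=B^{+}$ (resp. $C=B^{\ast}$) then gives the proposition. Moreover, since $S$ is commutative the classes $IFLI(S)$, $IFRI(S)$ and $IFI(S)$ coincide, so it is enough to carry out the second observation for two-sided intuitionistic fuzzy ideals, and the one-sided statements follow automatically.

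For the first observation: if $B=(\mu_{B},\nu_{B})$ is an intuitionistic fuzzy ideal of $R$ (of any of the three types), Proposition 2.7 — together with the analogous statement for intuitionistic fuzzy right ideals, proved by the same argument — shows that $B^{\ast}=(\mu_{B}^{\ast},\nu_{B}^{\ast})$ is an intuitionistic fuzzy ideal of $S$ of the corresponding type; symmetrically, if $B$ is such an ideal of $L$, the left operator analogues recorded in Remark 1 show that $B^{+}=(\mu_{B}^{+},\nu_{B}^{+})$ is an intuitionistic fuzzy ideal of $S$ of the corresponding type. Thus the proposition reduces to showing: for $C=(\mu_{C},\nu_{C})\in IFI(S)$ and $x\in S$, we have $<x,C>\in IFI(S)$.

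For the second observation, I would unwind the definition of the intuitionistic fuzzy ideal extension from \cite{S5} and verify the defining inequalities of an intuitionistic fuzzy ideal of $S$ directly. The inequality $\mu_{<x,C>}(y)+\nu_{<x,C>}(y)\le 1$ for all $y\in S$ is immediate from $\mu_{C}+\nu_{C}\le 1$, and the remaining book-keeping (non-emptiness and the like) is routine. For the domination by the left factor, associativity alone suffices: rewriting $x\gamma(a\beta y)=(x\gamma a)\beta y$ and using that $C$ is an ideal of $S$ gives $\mu_{C}\big((x\gamma a)\beta y\big)\ge\mu_{C}(x\gamma a)$, hence $\mu_{<x,C>}(a\beta y)\ge\mu_{<x,C>}(a)$ after passing to the appropriate operation over $\Gamma$, and dually for $\nu_{C}$. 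For the domination by the right factor — the only place where commutativity of $S$ is genuinely used — one rewrites $x\gamma a\beta y=(x\beta y)\gamma a$ and again uses that $C$ is an ideal of $S$ to obtain $\mu_{C}\big((x\beta y)\gamma a\big)\ge\mu_{C}(x\beta y)\ge\mu_{<x,C>}(y)$, so $\mu_{<x,C>}(a\beta y)\ge\mu_{<x,C>}(y)$, and dually for $\nu_{C}$. This establishes $<x,C>\in IFI(S)$ and hence the proposition. I expect the second observation to be the only genuine point, and within it the commutativity rearrangement of the $\Gamma$-products combined with keeping track of the $\Gamma$-indexed infima and suprema in the extension; everything else is inherited from \cite{S5} and from Proposition 2.7 and Proposition 2.8 (and their left operator analogues), and the write-up can be arranged to run parallel to the proof of Proposition 2.24.
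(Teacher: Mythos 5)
The paper offers no argument for this proposition at all --- it is introduced with the phrase ``by routine verification'' and carries no proof --- so there is nothing in-paper to match your proposal against; judged on its own merits, your outline is correct and would expand into a complete proof. Your route is the natural factorization: first $B^{+}\in IFI(S)$ (resp.\ $B^{\ast}\in IFI(S)$), then the general fact that the extension $<x,C>$ of any $C\in IFI(S)$ over a commutative $\Gamma$-semigroup is again in $IFI(S)$, and you prove the second step by direct computation, correctly isolating commutativity as needed only for the left-ideal inequality via the rearrangement $x\gamma a\beta y=(x\beta y)\gamma a$. That second step is essentially Proposition 2.28 of this paper, which is there proved only for $S$ \emph{with unities} by a round trip through the right operator semigroup and Theorem 2.10; your direct verification dispenses with unities, which matters because the present proposition sits in the portion of the paper where $S$ is not assumed to have them. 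This same point is the one caveat in your write-up: Proposition 2.7, Proposition 2.8 and their left-operator analogues are all stated under the standing hypothesis that $S$ has unities, so they cannot be cited verbatim for your first step. The fact you need is nevertheless true without unities: $\mu_{B}^{\ast}(a\alpha b)=\underset{\gamma\in\Gamma}{\inf}\,\mu_{B}([\gamma,a][\alpha,b])\geq\max\{\mu_{B}^{\ast}(a),\mu_{B}^{\ast}(b)\}$ and dually for $\nu_{B}^{\ast}$, using $[\gamma,a][\alpha,b]=[\gamma,a\alpha b]$, and likewise $[a,\alpha][b,\gamma]=[a\alpha b,\gamma]$ for $B^{+}$; so either insert this two-line verification or remark explicitly why the unities hypothesis can be dropped for this implication. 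With that repair the argument is sound, and since $S$ (hence $L$ and $R$) is commutative, your observation that the left, right and two-sided classes coincide disposes of the parenthetical variants.
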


Now we deduce the following two lemmas on the relationships between a $\Gamma$-semigroup and its operator semigroups in terms of $IFE.$

\begin{lemma}
Let $A=(\mu_{A},\nu_{A})$ be an $IFS(S)$ where $S$ is commutative. Then for all $x\in S$,

$(1)$ $<x,A>^{\ast^{'}}\subseteq <[\alpha,x],A^{\ast^{'}}>\forall\alpha\in\Gamma.$\\

$(2)$ $<x,A>^{\ast^{'}}=(<x,\mu_{A}>^{\ast^{'}},<x,\nu_{A}>^{\ast^{'}})$

$\ \ \ \ \ \ \ \ \ \ \ \ \ \ \ \ \ =(\underset{\alpha\in\Gamma}{\inf}<[\alpha,x],\mu_{A}^{\ast^{'}}>,\underset{\alpha\in\Gamma}{\sup}<[\alpha,x],\nu_{A}^{\ast^{'}}>).$
\end{lemma}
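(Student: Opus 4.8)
The plan is to establish part (2) first, since part (1) then follows immediately. I use the definition of the intuitionistic fuzzy extension from \cite{S5}: for an $IFS(S)$ $A=(\mu_{A},\nu_{A})$ and $x\in S$, $<x,A>=(<x,\mu_{A}>,<x,\nu_{A}>)$ with $<x,\mu_{A}>(y)=\underset{\gamma\in\Gamma}{\inf}\,\mu_{A}(x\gamma y)$ and $<x,\nu_{A}>(y)=\underset{\gamma\in\Gamma}{\sup}\,\nu_{A}(x\gamma y)$; the analogous extension inside the ordinary semigroup $R$ reads $<[\alpha,x],C>(z)=C([\alpha,x]z)$ for an $IFS(R)$ $C$; and the product in $R$ satisfies $[\alpha,x][\beta,b]=[\alpha,x\beta b]$ (equivalently, acting on $S$, $[\alpha,x][\beta,b]$ sends $s$ to $s\alpha x\beta b$). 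The first equality in (2), $<x,A>^{\ast^{'}}=(<x,\mu_{A}>^{\ast^{'}},<x,\nu_{A}>^{\ast^{'}})$, is merely the remark that both $()^{\ast^{'}}$ and $<x,\cdot>$ act componentwise on the pair $(\mu_{A},\nu_{A})$; no computation is required there.

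For the substantive equality in (2) I would fix $[\beta,b]\in R$ and unwind each side. On the left, by the definitions of $()^{\ast^{'}}$ and of the extension,
\[
<x,\mu_{A}>^{\ast^{'}}([\beta,b])=\underset{s\in S}{\inf}\,<x,\mu_{A}>(s\beta b)=\underset{s\in S}{\inf}\,\underset{\gamma\in\Gamma}{\inf}\,\mu_{A}(x\gamma s\beta b),
\]
while on the right, using the multiplication of $R$ and then the definition of $()^{\ast^{'}}$,
\[
\Bigl(\underset{\alpha\in\Gamma}{\inf}\,<[\alpha,x],\mu_{A}^{\ast^{'}}>\Bigr)([\beta,b])=\underset{\alpha\in\Gamma}{\inf}\,\mu_{A}^{\ast^{'}}([\alpha,x\beta b])=\underset{\alpha\in\Gamma}{\inf}\,\underset{t\in S}{\inf}\,\mu_{A}(t\alpha x\beta b).
\]
Since all infima are taken in $[0,1]$ their order may be interchanged freely, so it remains only to match $\mu_{A}(x\gamma s\beta b)$ with $\mu_{A}(t\alpha x\beta b)$; this is exactly where commutativity of $S$ enters, giving $x\gamma s=s\gamma x$, hence $x\gamma s\beta b=s\gamma x\beta b$, and after relabelling $s\leftrightarrow t$, $\gamma\leftrightarrow\alpha$ the two iterated infima coincide. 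The non-membership component is handled identically with every $\inf$ replaced by $\sup$. This proves (2).

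Part (1) is then immediate: fixing $\alpha\in\Gamma$, the membership component of $<x,A>^{\ast^{'}}$, namely $\underset{\alpha'\in\Gamma}{\inf}<[\alpha',x],\mu_{A}^{\ast^{'}}>$, is pointwise $\leq <[\alpha,x],\mu_{A}^{\ast^{'}}>$, the membership component of $<[\alpha,x],A^{\ast^{'}}>$; dually the non-membership component of $<x,A>^{\ast^{'}}$ equals $\underset{\alpha'\in\Gamma}{\sup}<[\alpha',x],\nu_{A}^{\ast^{'}}>\geq <[\alpha,x],\nu_{A}^{\ast^{'}}>$. By the definition of inclusion of intuitionistic fuzzy subsets this says precisely $<x,A>^{\ast^{'}}\subseteq <[\alpha,x],A^{\ast^{'}}>$ for every $\alpha\in\Gamma$. (Alternatively (1) can be obtained directly by the same unwinding: for a fixed $\alpha$, choosing $\gamma=\alpha$ in the inner infimum of the left-hand side and applying $x\alpha s=s\alpha x$ already yields the one-sided inequalities.)

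The argument presents no real obstacle; it is essentially bookkeeping. The one point demanding care is keeping track of the order of substitutions inside the composite operators $()^{\ast^{'}}$, $<x,\cdot>$ and the product of $R$, and in particular applying commutativity at the right moment -- to slide $x$ past the dummy element $s$ produced by $()^{\ast^{'}}$ so that it aligns with the $x$ generated by the extension carried out inside $R$. Everything else (interchanging nested $\inf$'s and $\sup$'s over $[0,1]$, and noting that $<x,A>$ is again a legitimate $IFS(S)$ so that $()^{\ast^{'}}$ may be applied) is routine.
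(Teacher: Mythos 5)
Your proof is correct and follows essentially the same computation as the paper's: unwind both sides at a fixed element $[\beta,b]\in R$ via the definitions of $()^{\ast^{'}}$, the extension, and the multiplication of $R$, then use commutativity of $S$ to slide $x$ past the dummy variable $s$ and interchange the nested infima/suprema. The only difference is organizational --- you prove $(2)$ first and obtain $(1)$ as an immediate consequence of the inf/sup over $\alpha$, whereas the paper proves $(1)$ directly by the same unwinding and then establishes $(2)$ separately.
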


\begin{proof}
$(1)$ Let $[\beta,y]\in R.$ Then $<x,\mu_{A}>^{\ast^{'}}([\beta,y])=\underset{s\in S}{\inf}$ $<x,\mu_{A}>(s\beta y)=\underset{s\in S}{\inf}\underset{\gamma\in\Gamma}{\inf}$ $\mu_{A}(x\gamma s\beta y)=\underset{\gamma\in\Gamma}{\inf}\underset{s\in S}{\inf} $ $\mu_{A}(x\gamma s\beta y).$ Again $<[\alpha,x],\mu^{\ast^{'}}_{A}>([\beta,y])=\mu^{\ast^{'}}_{A}([\alpha,x][\beta,y])=\mu^{\ast^{'}}_{A}([\alpha,x\beta y])=\underset{s\in S}{\inf}\mu^{\ast}_{A}(s\alpha x\beta y)=\underset{s\in S}{\inf}\mu^{\ast}_{A}(x\alpha s\beta y)($using the commutativity of $S).$ Since $\underset{\gamma\in\Gamma}{\inf}\underset{s\in S}{\inf}\mu^{\ast}_{A}(x\gamma s\beta y)\leq\underset{s\in S}{\inf}\mu^{\ast}_{A}(x\alpha s\beta y),$ we obtain $<x,\mu_{A}>^{\ast^{'}}([\beta,y])\leq<[\alpha,x],\mu^{\ast^{'}}_{A}>([\beta,y]).$ By similar  argument we can show that $<x,\nu_{A}>^{\ast^{'}}([\beta,y])\geq <[\alpha,x],\nu^{\ast^{'}}_{A}>([\beta,y]).$ Hence $<x,A>^{\ast^{'}}\subseteq <[\alpha,x],A^{\ast^{'}}>\forall\alpha\in\Gamma.$\\

$(2)$ Let $[\beta,y]\in R.$ Then $\underset{\alpha\in\Gamma}{\inf}$ $<[\alpha,x],\mu^{\ast^{'}}_{A}>([\beta,y])=\underset{\alpha\in\Gamma}{\inf}$ $\mu^{\ast^{'}}_{A}([\alpha,x][\beta,y])=\underset{\alpha\in\Gamma}{\inf}$ $\mu^{\ast^{'}}_{A}([\alpha,x\beta y])=\underset{\alpha\in\Gamma}{\inf}\underset{s\in S}{\inf}$ $\mu^{\ast}_{A}(s\alpha x\beta y)=\underset{s\in S}{\inf}$ $<x,\mu_{A}>(s\beta y)=<x,\mu_{A}>^{\ast^{'}}([\beta,y]).$ By applying similar argument we obtain $\underset{\alpha\in\Gamma}{\sup}$ $<[\alpha,x],\nu^{\ast^{'}}_{A}>([\beta,y])=<x,\nu_{A}>^{\ast^{'}}([\beta,y]).$ Hence  $<x,A>^{\ast^{'}}=(<x,\mu_{A}>^{\ast^{'}},<x,\nu_{A}>^{\ast^{'}})=(\underset{\alpha\in\Gamma}{\inf}<[\alpha,x],\mu_{A}^{\ast^{'}}>,\underset{\alpha\in\Gamma}{\sup}<[\alpha,x],\nu_{A}^{\ast^{'}}>).$
\end{proof}

\begin{lemma}
If $B=(\mu_{B},\nu_{B})$ is an $IFS(R)$ then ~for all~ $x\in S,$ $<[\beta,x],B>^{\ast}\supseteq  <x,B^{\ast}>\forall\beta\in\Gamma.$
\end{lemma}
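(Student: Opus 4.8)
The plan is to unwind both sides of the claimed containment $<[\beta,x],B>^{\ast}\supseteq<x,B^{\ast}>$ down to the level of membership values at an arbitrary point $m\in S$, and then compare the two infima (for the $\mu$-component) and the two suprema (for the $\nu$-component). Recall that for an $IFS$ $C$ and an element $z$, the extension $<z,C>$ is defined (in \cite{S5}) by a formula involving the product $z\cdot(\cdot)$; here, since $B$ lives on $R$, the relevant product is that of $R$, while the outer $()^{\ast}$ converts an $IFS(R)$ back to an $IFS(S)$ via an infimum/supremum over $\Gamma$. So I would first write, for $m\in S$,
\[
<[\beta,x],\mu_{B}>^{\ast}(m)=\underset{\gamma\in\Gamma}{\inf}\ <[\beta,x],\mu_{B}>([\gamma,m])=\underset{\gamma\in\Gamma}{\inf}\ \mu_{B}([\beta,x][\gamma,m])=\underset{\gamma\in\Gamma}{\inf}\ \mu_{B}([\beta,x\gamma m]),
\]
using the multiplication rule $[\alpha,a][\beta,b]=[\alpha a\beta,b]$ in $R$ — wait, more carefully, $[\beta,x][\gamma,m]=[\beta x\gamma,m]$, so the bracket that appears is $[\beta x\gamma,m]$. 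On the other side,
\[
<x,\mu_{B}^{\ast}>(m)=\underset{\delta\in\Gamma}{\inf}\ \mu_{B}^{\ast}(x\delta m)=\underset{\delta\in\Gamma}{\inf}\ \underset{\eta\in\Gamma}{\inf}\ \mu_{B}([\eta,x\delta m]).
\]
Then I would exhibit, for each fixed $\gamma\in\Gamma$, that the value $\mu_{B}([\beta x\gamma,m])$ appearing in the first infimum also appears among the terms of the second double infimum (taking $\delta=\gamma$ and $\eta=\beta x$ — or rather identifying $[\beta x\gamma,m]$ with $[\eta,x\delta m]$ for a suitable choice), which shows the second (double) infimum is $\le$ the first, i.e. $<x,\mu_{B}^{\ast}>(m)\le<[\beta,x],\mu_{B}>^{\ast}(m)$. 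Dually, for the $\nu$-component the double supremum dominates the single supremum, giving $<x,\nu_{B}^{\ast}>(m)\ge<[\beta,x],\nu_{B}>^{\ast}(m)$. Together these two pointwise inequalities are exactly the definition of the containment $<[\beta,x],B>^{\ast}\supseteq<x,B^{\ast}>$, and since $\beta\in\Gamma$ was arbitrary the lemma follows.

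The main obstacle — and the step I would be most careful about — is matching up the bracket expressions correctly, because it hinges on the precise form of the $IFE$ definition in \cite{S5} (whether $<z,C>(w)$ is $\sup$ over factorizations $w=z\cdot u$ of $C(u)$, or is defined via $C$ applied to $z$ times something, etc.) and on the precise multiplication convention in $R$ versus which slot of the pair $()^{\ast}$ collapses. The analogous Lemma~2.28 part (1) for the $\Gamma$-semigroup side has $<x,A>$ unfolding as an infimum over $\Gamma$ of $\mu_{A}(x\gamma s\beta y)$, so by analogy $<[\beta,x],B>$ on $R$ should unfold with the single product $[\beta,x][\gamma,m]$ (no extra $\Gamma$-infimum, since $R$ is an ordinary semigroup), while $B^{\ast}$ carries one $\Gamma$-infimum and $<x,\cdot>$ carries another; this is why one side has a single infimum over $\Gamma$ and the other a double one, and the containment (rather than equality) is genuine. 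Once the correspondence of index sets is pinned down, the inequality is just the elementary fact that an infimum over a larger (or differently parametrized but surjecting) family is no larger than an infimum over a subfamily, so no real computation remains.
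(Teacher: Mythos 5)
Your proof is correct and follows essentially the same route as the paper's: both sides unwind to the single infimum $\inf_{\gamma\in\Gamma}\mu_{B}([\beta,x\gamma m])$ versus the double infimum $\inf_{\eta\in\Gamma}\inf_{\gamma\in\Gamma}\mu_{B}([\eta,x\gamma m])$ (dually with suprema for $\nu_{B}$), and the containment is just the comparison of an infimum over a family with the infimum over a subfamily. The only slip is the parenthetical choice ``$\eta=\beta x$'', which is not an element of $\Gamma$; the correct identification, which you in effect arrive at, is $[\beta x\gamma,m]=[\beta,x\gamma m]$, i.e.\ $\eta=\beta$ and $\delta=\gamma$.
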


\begin{proof}
Let $p\in S.$ Then $<[\beta,x],\mu_{B}>^{\ast}(p)=\underset{\gamma\in\Gamma}{\inf}$ $<[\beta,x],\mu_{B}>([\gamma,p])=\underset{\gamma\in\Gamma}{\inf}$ $\mu_{B}([\beta,x][\gamma,p])=\underset{\gamma\in\Gamma}{\inf}$ $\mu_{B}([\beta,x\gamma p]).$ Again $<x,\mu^{\ast}_{B}>(p)=\underset{\gamma\in\Gamma}{\inf}$ $\mu^{\ast}_{B}(x\gamma p)=\underset{\gamma\in\Gamma}{\inf}\underset{\beta\in\Gamma}{\inf}$ $\mu_{B}([\beta,x\gamma p])=\underset{\beta\in\Gamma}{\inf}\underset{\gamma\in\Gamma}{\inf}$ $\mu_{B}([\beta,x\gamma p]).$ Since $\underset{\gamma\in\Gamma}{\inf}$ $\mu_{B}([\beta,x\gamma p])\geq\underset{\beta\in\Gamma}{\inf}\underset{\gamma\in\Gamma}{\inf}$ $\mu_{B}([\beta,x\gamma p]),$ we have $<[\beta,x],\mu_{B}>^{\ast}(p)\geq <x,\mu^{\ast}_{B}>(p).$ Similarly we can show that $<[\beta,x],\nu_{B}>^{\ast}(p)\leq <x,\nu^{\ast}_{B}>(p).$ Hence $<[\beta,x],B>^{\ast}\supseteq <x,B^{\ast}>\forall\beta\in\Gamma.$
\end{proof}

\begin{lemma}
Let $I$ be an $I(S)$. Then $((\mu_{I})^{\ast^{'}},(\nu_{I})^{\ast^{'}})=(\mu_{I^{\ast^{'}}},\nu_{I^{\ast^{'}}}).$
\end{lemma}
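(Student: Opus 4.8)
The plan is to prove the identity pointwise on the right operator semigroup $R$, reading $(\mu_{I},\nu_{I})$ as the characteristic pair of the crisp set $I$, i.e. $\mu_{I}=\chi_{I}$ and $\nu_{I}=\chi_{I}^{c}$, and likewise $\mu_{I^{\ast^{\prime}}}=\chi_{I^{\ast^{\prime}}}$, $\nu_{I^{\ast^{\prime}}}=\chi_{I^{\ast^{\prime}}}^{c}$, exactly as in Lemma $2.12$. In this way the statement is just the crisp-ideal instance of Proposition $2.6$, and the argument is parallel to that of Lemma $2.12$; in fact, since a two-sided ideal of $S$ is in particular a right ideal, the assertion is already subsumed by Lemma $2.12$, but the direct argument is equally short.

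First I would fix $[\alpha,x]\in R$ and recall, from Definition $2.2$, that $(\mu_{I})^{\ast^{\prime}}([\alpha,x])=\underset{s\in S}{\inf}\mu_{I}(s\alpha x)$ and $(\nu_{I})^{\ast^{\prime}}([\alpha,x])=\underset{s\in S}{\sup}\nu_{I}(s\alpha x)$, while, from the description of $()^{\ast^{\prime}}$ recalled just before Proposition $2.5$, $[\alpha,x]\in I^{\ast^{\prime}}$ if and only if $s\alpha x\in I$ for every $s\in S$. Then I would split into two cases. If $[\alpha,x]\in I^{\ast^{\prime}}$, then $s\alpha x\in I$ for all $s\in S$, so $\mu_{I}(s\alpha x)=1$ and $\nu_{I}(s\alpha x)=0$ for all $s$, whence $(\mu_{I})^{\ast^{\prime}}([\alpha,x])=1=\mu_{I^{\ast^{\prime}}}([\alpha,x])$ and $(\nu_{I})^{\ast^{\prime}}([\alpha,x])=0=\nu_{I^{\ast^{\prime}}}([\alpha,x])$. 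If instead $[\alpha,x]\notin I^{\ast^{\prime}}$, choose $s_{0}\in S$ with $s_{0}\alpha x\notin I$; since $\mu_{I},\nu_{I}$ are $\{0,1\}$-valued and $\mu_{I}(s_{0}\alpha x)=0$, $\nu_{I}(s_{0}\alpha x)=1$, we get $\underset{s\in S}{\inf}\mu_{I}(s\alpha x)=0$ and $\underset{s\in S}{\sup}\nu_{I}(s\alpha x)=1$, matching $\mu_{I^{\ast^{\prime}}}([\alpha,x])=0$ and $\nu_{I^{\ast^{\prime}}}([\alpha,x])=1$. This second case also covers the degenerate possibility $I^{\ast^{\prime}}=\phi$. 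Since $[\alpha,x]$ is arbitrary, $(\mu_{I})^{\ast^{\prime}}=\mu_{I^{\ast^{\prime}}}$ and $(\nu_{I})^{\ast^{\prime}}=\nu_{I^{\ast^{\prime}}}$, which is the claimed equality of intuitionistic fuzzy subsets of $R$.

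I expect no real obstacle here: the claim is a two-line bookkeeping identity between characteristic functions, and the only thing requiring attention is to use the correct set-theoretic description of $I^{\ast^{\prime}}$, namely $I^{\ast^{\prime}}=\{[\alpha,x]\in R:s\alpha x\in I \ \forall s\in S\}$; once that is in hand, the two-case evaluation above completes the proof. Alternatively the statement may simply be quoted from Lemma $2.12$ applied to the right ideal underlying $I$.
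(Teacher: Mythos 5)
Your proof is correct and takes essentially the same route as the paper: the paper's own argument is exactly your two-case pointwise evaluation (fix $[\beta,y]\in R$, treat $[\beta,y]\in I^{\ast^{\prime}}$ and $[\beta,y]\notin I^{\ast^{\prime}}$ separately, and compare the $\inf$/$\sup$ with the characteristic values). Your side observation that the lemma is already subsumed by Lemma $2.12$ is also accurate and is a slightly cleaner way to dispose of it.
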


\begin{proof}
Let $[\beta,y]\in R.$ Then $(\mu_{I})^{\ast^{'}}([\beta,y])=\underset{s\in S}{\inf}$ $\mu_{I}(s\beta y)$ and $(\nu_{I})^{\ast^{'}}([\beta,y])=\underset{s\in S}{\sup}$ $\nu_{I}(s\beta y).$ Suppose $[\beta,y]\in I^{\ast^{'}}.$ Then $s\beta y\in I$ for all $s\in S.$ Hence $\mu_{I}(s\beta y)=1$ and $\nu_{I}(s\beta y)=0$ for all $s\in S$ which implies that $\underset{s\in S}{\inf}$ $\mu_{I}(s\beta y)=1$ and $\underset{s\in S}{\sup}$ $\nu_{I}(s\beta y)=0$ whence $(\mu_{I})^{\ast^{'}}([\beta,y])=1$ and $(\nu_{I})^{\ast^{'}}([\beta,y])=0.$ Also $\mu_{I^{\ast^{'}}}([\beta,y])=1$ and $\nu_{I^{\ast^{'}}}([\beta,y])=0.$ If $[\beta,y]\notin I^{\ast^{'}}$ then $(\mu_{I^{\ast^{'}}})([\beta,y])=0$, $(\nu_{I^{\ast^{'}}})([\beta,y])=1$ and there exists $s  \in S$ such that $s\beta y\notin I $. Hence $\mu_{I}(s\beta y)= 0$ and $\nu_{I}(s\beta y)= 1$ whence $(\mu_{I})^{\ast^{'}}([\beta,y])=0$ and $(\nu_{I})^{\ast^{'}}([\beta,y])=1.$ Consequently, $((\mu_{I})^{\ast^{'}},(\nu_{I})^{\ast^{'}})=(\mu_{I^{\ast^{'}}},\nu_{I^{\ast^{'}}}).$
\end{proof}

\begin{lemma}
$\cite{D3}$ Let $\{A_{\alpha}\}_{\alpha\in I}$ be a family of $I(S).$ Then $(\underset{\alpha\in I}{\bigcap} A_{\alpha})^{\ast^{'}}=\underset{\alpha\in I}{\bigcap}(A_{\alpha})^{\ast^{'}}.$
\end{lemma}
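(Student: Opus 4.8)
The plan is to prove the asserted equality of subsets of $R$ by unwinding, on an arbitrary element, the definition of the operation $()^{\ast^{\prime}}$ on subsets of $S$, namely $P^{\ast^{\prime}}=\{[\gamma,x]\in R: s\gamma x\in P\ \forall s\in S\}$ (to avoid a clash with the index set $I$ of the family, I would write the $\Gamma$-coordinate as $\gamma$ and the index as $i$). Fix $[\gamma,x]\in R$ arbitrary. By definition, $[\gamma,x]\in\bigl(\bigcap_{i\in I}A_i\bigr)^{\ast^{\prime}}$ holds precisely when $s\gamma x\in\bigcap_{i\in I}A_i$ for every $s\in S$, which in turn says exactly that $s\gamma x\in A_i$ for every $s\in S$ and every $i\in I$.

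The one real manoeuvre is to interchange the two universal quantifiers ``for all $s\in S$'' and ``for all $i\in I$'', which is of course legitimate. After the interchange the condition reads: for every $i\in I$, $s\gamma x\in A_i$ for all $s\in S$; that is, $[\gamma,x]\in(A_i)^{\ast^{\prime}}$ for every $i\in I$; that is, $[\gamma,x]\in\bigcap_{i\in I}(A_i)^{\ast^{\prime}}$. Chaining these equivalences gives $[\gamma,x]\in\bigl(\bigcap_{i\in I}A_i\bigr)^{\ast^{\prime}}\Leftrightarrow[\gamma,x]\in\bigcap_{i\in I}(A_i)^{\ast^{\prime}}$, and since $[\gamma,x]\in R$ was arbitrary the two sets coincide. (Equivalently one can phrase this as a routine double inclusion, the two halves being the two directions of the quantifier swap.)

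To make this a statement about ideals rather than merely subsets of $R$, I would note at the outset that an intersection of a family of $I(S)$ is again an $I(S)$, so by Proposition $2.4$ (its $I(S)\to I(R)$ part) both $\bigl(\bigcap_{i\in I}A_i\bigr)^{\ast^{\prime}}$ and each $(A_i)^{\ast^{\prime}}$, hence also $\bigcap_{i\in I}(A_i)^{\ast^{\prime}}$, are genuine ideals of $R$; this also disposes of any non-emptiness concern. Beyond this bookkeeping there is essentially no obstacle: the content is entirely the quantifier interchange above, and no feature of the multiplicative structure of $S$ or $R$ is used beyond what is already packaged into the definition of $()^{\ast^{\prime}}$. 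The same argument, \emph{mutatis mutandis}, yields the $()^{+^{\prime}}$-analogue for the left operator semigroup $L$.
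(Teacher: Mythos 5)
Your proof is correct: the identity is purely set-theoretic, and the element-wise unwinding of $P^{\ast^{\prime}}=\{[\gamma,x]\in R: s\gamma x\in P\ \forall s\in S\}$ followed by the interchange of the two universal quantifiers is exactly the right (and essentially the only) argument. The paper itself gives no proof of this lemma --- it is quoted from reference [D3] --- so there is no in-paper proof to compare against; yours is the standard one. One minor caveat on your ``bookkeeping'' aside: Proposition 2.4 is stated for $\Gamma$-semigroups \emph{with} unities, whereas this lemma appears in the portion of the paper where $S$ need not have unities, so the appeal to it is not quite licensed as written; but since the asserted equality of subsets of $R$ does not depend on either side being an ideal, this does not affect the validity of your argument.
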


\begin{lemma}
Let $S$ be a $\Gamma$-semigroup, $R$ be its right operator semigroup and $\{A_{i}\}_{i\in I}=(\mu_{A_{i}},\nu_{A_{i}})_{i\in I}$ be a family of $IFS(S)$ such that $A=(\mu_{A},\nu_{A}):=\underset{i\in I}{\inf}$ $ A_{i}=(\underset{i\in I}{\inf}$ $\mu_{A_{i}},\underset{i\in I}{\sup}$ $\nu_{A_{i}}).$  Then $A^{\ast^{'}}:=((\mu_{A})^{\ast^{'}},(\nu_{A})^{\ast^{'}})=\underset{i\in I}{\inf}$ $ (A_{i})^{\ast^{'}}=(\underset{i\in I}{\inf}$ $(\mu_{A_{i}})^{\ast^{'}},\underset{i\in I}{\sup}$ $(\nu_{A_{i}})^{\ast^{'}}).$
\end{lemma}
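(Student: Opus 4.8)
The plan is to reduce the assertion to the elementary fact that an iterated infimum (resp.\ supremum) of a family of reals in $[0,1]$ may be evaluated in either order. Since both sides of the claimed equality are $IFS(R)$, it suffices to fix an arbitrary $[\beta,y]\in R$ and to verify the two scalar identities $(\mu_{A})^{\ast^{'}}([\beta,y])=\underset{i\in I}{\inf}\,(\mu_{A_{i}})^{\ast^{'}}([\beta,y])$ and $(\nu_{A})^{\ast^{'}}([\beta,y])=\underset{i\in I}{\sup}\,(\nu_{A_{i}})^{\ast^{'}}([\beta,y])$.

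First I would note, for the sake of well-definedness, that $A=\underset{i\in I}{\inf}A_{i}$ is again an $IFS(S)$: for every $x\in S$ and every $j\in I$ we have $\underset{i\in I}{\inf}\mu_{A_{i}}(x)\le\mu_{A_{j}}(x)\le 1-\nu_{A_{j}}(x)$, so $\nu_{A_{j}}(x)\le 1-\underset{i\in I}{\inf}\mu_{A_{i}}(x)$ for all $j$, and taking the supremum over $j$ gives $\underset{i\in I}{\inf}\mu_{A_{i}}(x)+\underset{j\in I}{\sup}\nu_{A_{j}}(x)\le 1$; similarly each $(A_{i})^{\ast^{'}}$ is an $IFS(R)$ by Definition $2.2$, so that $\underset{i\in I}{\inf}(A_{i})^{\ast^{'}}$ is meaningful as well.

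The heart of the proof is then a single interchange of infima. Unwinding Definition $2.2$ and the definition of $\mu_{A}=\underset{i\in I}{\inf}\mu_{A_{i}}$, one has
\[
(\mu_{A})^{\ast^{'}}([\beta,y])=\underset{s\in S}{\inf}\,\mu_{A}(s\beta y)=\underset{s\in S}{\inf}\,\underset{i\in I}{\inf}\,\mu_{A_{i}}(s\beta y)=\underset{i\in I}{\inf}\,\underset{s\in S}{\inf}\,\mu_{A_{i}}(s\beta y)=\underset{i\in I}{\inf}\,(\mu_{A_{i}})^{\ast^{'}}([\beta,y]),
\]
the third equality being the order-independence of the double infimum over $\{\mu_{A_{i}}(s\beta y):s\in S,\ i\in I\}\subseteq[0,1]$. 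The computation for the non-membership part is dual, with $\sup$ replacing $\inf$:
\[
(\nu_{A})^{\ast^{'}}([\beta,y])=\underset{s\in S}{\sup}\,\nu_{A}(s\beta y)=\underset{s\in S}{\sup}\,\underset{i\in I}{\sup}\,\nu_{A_{i}}(s\beta y)=\underset{i\in I}{\sup}\,\underset{s\in S}{\sup}\,\nu_{A_{i}}(s\beta y)=\underset{i\in I}{\sup}\,(\nu_{A_{i}})^{\ast^{'}}([\beta,y]).
\]
As $[\beta,y]$ was arbitrary, this yields $A^{\ast^{'}}=((\mu_{A})^{\ast^{'}},(\nu_{A})^{\ast^{'}})=(\underset{i\in I}{\inf}(\mu_{A_{i}})^{\ast^{'}},\underset{i\in I}{\sup}(\nu_{A_{i}})^{\ast^{'}})=\underset{i\in I}{\inf}(A_{i})^{\ast^{'}}$, as required.

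I do not expect any genuine obstacle. The only points requiring a moment's care are the routine verification that every object appearing is a legitimate $IFS$ (carried out in the second paragraph) and an explicit agreement on the convention for $\inf$ and $\sup$ of a family of $IFS$; once these are fixed, the result is nothing more than the interchange-of-infima identity. Note in particular that no unity assumption on $S$ is used here, in contrast with the bijection theorems earlier in this section.
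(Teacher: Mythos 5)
Your proof is correct and follows essentially the same route as the paper's: fix $[\beta,y]\in R$, unwind the definition of $()^{\ast^{'}}$, and interchange the double infimum (resp.\ supremum) over $S$ and $I$. The only difference is your additional (harmless) verification that $\inf_{i}A_{i}$ is a legitimate $IFS$, which the paper omits.
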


\begin{proof}
Let $[\alpha,x]\in R.$ Then $(\mu_{A})^{\ast^{'}}([\alpha,x])=(\underset{i\in I}{\inf}$ $\mu_{A_{i}})^{\ast^{'}}([\alpha,x])=\underset{s\in S}{\inf}$ $\underset{i\in I}{\inf}$ $\mu_{A_{i}}(s\alpha x)=\underset{i\in I}{\inf}$ $\underset{s\in S}{\inf}$ $\mu_{A_{i}}(s\alpha x)=\underset{i\in I}{\inf}$ $(\mu_{A_{i}})^{\ast^{'}}([\alpha,x]).$ Similarly we can show that $(\nu_{A})^{\ast^{'}}([\alpha,x])=\underset{i\in I}{\sup}$ $(\nu_{A_{i}})^{\ast^{'}}([\alpha,x]).$ Hence $A^{\ast^{'}}=\underset{i\in I}{\inf}$ $ (A_{i})^{\ast^{'}}.$
\end{proof}

\begin{proposition}
Let $S$ be a commutative $\Gamma$-semigroup with unities and $A=(\mu_{A},\nu_{A})$ be an $IFI(S).$ Then $<x,A>:=(<x,\mu_{A}>,<x,\nu_{A}>)$ is an $IFI(S)$ for all $x\in S.$
\end{proposition}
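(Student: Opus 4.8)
The plan is to unwind the definition of the extension from \cite{S5}: for $x\in S$ one has the $IFS(S)$ given by $<x,\mu_{A}>(y)=\underset{\gamma\in\Gamma}{\inf}\,\mu_{A}(x\gamma y)$ and $<x,\nu_{A}>(y)=\underset{\gamma\in\Gamma}{\sup}\,\nu_{A}(x\gamma y)$ for $y\in S$, and then to verify the two defining inequalities of an $IFI(S)$ directly. (One could equally route the verification through the level-set characterisation of $IFI(S)$, Theorems $3.9$ and $3.10$ of \cite{S5}, but the direct check is shorter.)

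First I would dispose of the $IFS$ requirement. From $\mu_{A}(x\gamma y)\leq 1-\nu_{A}(x\gamma y)$ for every $\gamma\in\Gamma$, taking the infimum over $\gamma$ gives $<x,\mu_{A}>(y)\leq 1-<x,\nu_{A}>(y)$, i.e. $<x,\mu_{A}>(y)+<x,\nu_{A}>(y)\leq 1$. Next, because $A\in IFI(S)$ we have $\mu_{A}(x\gamma y)\geq\mu_{A}(y)$ and $\nu_{A}(x\gamma y)\leq\nu_{A}(y)$ for all $\gamma$, hence $<x,\mu_{A}>\supseteq\mu_{A}$ and $<x,\nu_{A}>\subseteq\nu_{A}$, i.e. $<x,A>\supseteq A$; non-emptiness of $<x,A>$ therefore follows from that of $A$.

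The main step is the ideal condition. Fix $a,b\in S$ and $\gamma\in\Gamma$. For every $\delta\in\Gamma$, associativity of the $\Gamma$-multiplication allows the regrouping $x\delta(a\gamma b)=(x\delta a)\gamma b$, and since $A$ is an intuitionistic fuzzy ideal, $\mu_{A}\big(x\delta(a\gamma b)\big)=\mu_{A}\big((x\delta a)\gamma b\big)\geq\mu_{A}(x\delta a)$; taking the infimum over $\delta$ yields $<x,\mu_{A}>(a\gamma b)\geq <x,\mu_{A}>(a)$. For the twin inequality I would invoke commutativity of $S$: $x\delta(a\gamma b)=x\delta(b\gamma a)=(x\delta b)\gamma a$, so $\mu_{A}\big(x\delta(a\gamma b)\big)\geq\mu_{A}(x\delta b)$, and after the infimum over $\delta$, $<x,\mu_{A}>(a\gamma b)\geq <x,\mu_{A}>(b)$. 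Hence $<x,\mu_{A}>(a\gamma b)\geq\max\{<x,\mu_{A}>(a),<x,\mu_{A}>(b)\}$. The computation for $<x,\nu_{A}>$ is the exact dual: suprema replace infima, and the ideal inequality is used in the form $\nu_{A}\big((x\delta a)\gamma b\big)\leq\nu_{A}(x\delta a)$, giving $<x,\nu_{A}>(a\gamma b)\leq\min\{<x,\nu_{A}>(a),<x,\nu_{A}>(b)\}$. These are precisely the conditions for $<x,A>$ to be an $IFI(S)$.

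I expect the one genuine subtlety to be the use of commutativity in the second half of each inequality — the right-ideal half falls out of associativity alone — together with the careful tracking of the bracketings in the products $x\delta a\gamma b$; the hypothesis that $S$ has unities, and the bookkeeping needed for non-emptiness, play only the same auxiliary supporting role here as in Propositions $2.21$ and $2.22$.
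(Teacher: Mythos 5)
Your proof is correct, but it takes a genuinely different route from the paper. You verify the ideal conditions directly from the pointwise definition $<x,\mu_{A}>(y)=\underset{\gamma\in\Gamma}{\inf}\,\mu_{A}(x\gamma y)$, using $\Gamma$-associativity for the inequality $<x,\mu_{A}>(a\gamma b)\geq<x,\mu_{A}>(a)$ and commutativity for its twin, and your bookkeeping (the $IFS$ bound, $A\subseteq<x,A>$, the dual computation for $\nu_{A}$) is sound. The paper instead proves the proposition entirely through the operator-semigroup machinery that is its theme: since $S$ is commutative so is $R$, hence $A^{\ast^{\prime}}$ is an $IFI(R)$ by Proposition $2.8$; the semigroup-level extension result (Proposition $3.2$ of \cite{S4}) makes each $<[\alpha,x],A^{\ast^{\prime}}>$ an $IFI(R)$, so their infimum over $\alpha$ is one, which Lemma $2.23(2)$ identifies with $<x,A>^{\ast^{\prime}}$; Proposition $2.7$ and the bijection of Theorem $2.10$ then pull the conclusion back to $S$. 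Each approach buys something: the paper's argument recycles the known semigroup result and showcases the transfer technique, but at the price of the unities hypothesis, which is needed for Theorem $2.10$ and Propositions $2.7$--$2.8$; your direct argument is shorter, self-contained, and in fact never uses unities at all, so it establishes the slightly stronger statement in which that hypothesis is dropped. Your closing remark that unities play ``the same auxiliary supporting role'' is therefore a little off the mark --- in your proof they play no role, whereas in the paper's proof they are what makes the round trip through $R$ legitimate.
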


\begin{proof}
Let $R$ be the right operator semigroup of a $\Gamma$-semigroup $S.$ Since $S$ is commutative, $R$ is also commutative. By Proposition $2.8,$ $A^{\ast^{'}}=(\mu^{\ast^{'}}_{A},\nu^{\ast^{'}}_{A})$ is an $IFI(R).$ Let $x\in S.$ Then for any $\alpha\in\Gamma,$ $<[\alpha,x],A^{\ast^{'}}>=(<[\alpha,x],\mu_{A}^{\ast^{'}}>,<[\alpha,x],\nu_{A}^{\ast^{'}}>)$ is an $IFI(R)(cf.$ Proposition $3.2\cite{S4})$ and hence $(\underset{\alpha\in\Gamma}{\inf}$ $<[\alpha,x],\mu_{A}^{\ast^{'}}>,\underset{\alpha\in\Gamma}{\sup}$ $<[\alpha,x],\nu_{A}^{\ast^{'}}>)$ is an $IFI(R).$ So by Lemma $2.23(2),$ $(<x,\mu_{A}>^{\ast^{'}},<x,\nu_{A}>^{\ast^{'}})=<x,A>^{\ast^{'}}$ is an $IFI(R).$ Consequently, $(<x,A>^{\ast^{'}})^{\ast}$ is an $IFI(S)(cf.$ Proposition $2.7).$ Hence $<x,A>$ is an $IFI(S)(cf.$ Theorem $2.10).$
\end{proof}

Now by applying Proposition $2.17,$ Remark $5(i)$ we obtain the following proposition.

\begin{proposition}
Let $S$ be a commutative $\Gamma$-semigroup and $A=(\mu_{A},\nu_{A})$ be an $IFSPI(S).$ Then $<x,A>=(<x,\mu_{A}>,<x,\nu_{A}>)$ is an $IFSPI(S)$ for all $x\in S.$
\end{proposition}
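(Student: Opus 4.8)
The plan is to run the argument of Proposition $2.28$ with its semiprime analogues, the one new ingredient being that the role played there by the unities must here be played by the semiprimeness of $A$. Let $R$ be the right operator semigroup of $S$; since $S$ is commutative, so is $R$. First I would note, by Proposition $2.17$ (semiprime case), that $A^{\ast^{'}}=(\mu_{A}^{\ast^{'}},\nu_{A}^{\ast^{'}})\in IFSPI(R)$. Then, for the fixed $x\in S$ and each $\alpha\in\Gamma$, the element $[\alpha,x]\in R$ yields an $IFE$ $<[\alpha,x],A^{\ast^{'}}>$, which is an $IFSPI(R)$ --- the $IFE$ of an $IFSPI$ of a commutative semigroup is again an $IFSPI$, the semiprime companion of the fact from \cite{S4} used in the proof of Proposition $2.28$. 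An arbitrary infimum of $IFSPI$'s is an $IFSPI$ (pass to level sets and use the characterisation of $IFSPI$ from \cite{S5} together with the fact that an intersection of semiprime ideals is semiprime), so $(\underset{\alpha}{\inf}<[\alpha,x],\mu_{A}^{\ast^{'}}>,\ \underset{\alpha}{\sup}<[\alpha,x],\nu_{A}^{\ast^{'}}>)\in IFSPI(R)$; by Lemma $2.23(2)$ this is precisely $<x,A>^{\ast^{'}}$. Hence $<x,A>^{\ast^{'}}\in IFSPI(R)$, and therefore $(<x,A>^{\ast^{'}})^{\ast}\in IFSPI(S)$ by Proposition $2.16$.

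It remains to show $(<x,A>^{\ast^{'}})^{\ast}=<x,A>$, so that $<x,A>$ inherits the $IFSPI(S)$ property. Unwinding Definition $2.2$ and the definition of $<x,\cdot>$ one gets, for $y\in S$,
\[
(<x,\mu_{A}>^{\ast^{'}})^{\ast}(y)=\underset{\gamma,\delta\in\Gamma,\ s\in S}{\inf}\ \mu_{A}(x\gamma s\delta y),
\]
and dually with $\inf$ replaced by $\sup$ for the $\nu$-component. The inequality $(<x,\mu_{A}>^{\ast^{'}})^{\ast}(y)\geq <x,\mu_{A}>(y)$ is routine: by commutativity $x\gamma s\delta y=(x\gamma y)\delta s$, whence $\mu_{A}(x\gamma s\delta y)\geq\mu_{A}(x\gamma y)\geq\underset{\epsilon\in\Gamma}{\inf}\mu_{A}(x\epsilon y)=<x,\mu_{A}>(y)$. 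For the reverse inequality --- the step that in Proposition $2.28$ was handed to us (via Theorem $2.10$, i.e. ultimately via a right unity) and must now be supplied by semiprimeness --- I would, for fixed $\theta,\nu\in\Gamma$, choose the test element $s:=x\nu y\in S$ and $\gamma=\delta=\theta$; then by commutativity $x\theta(x\nu y)\theta y=(x\theta y)\nu(x\theta y)$, so $(<x,\mu_{A}>^{\ast^{'}})^{\ast}(y)\leq\mu_{A}\big((x\theta y)\nu(x\theta y)\big)$. Taking the infimum over $\nu$ and using that $A$ is semiprime, i.e. $\mu_{A}(z)\geq\underset{\nu\in\Gamma}{\inf}\mu_{A}(z\nu z)$ (here with $z=x\theta y$), gives $(<x,\mu_{A}>^{\ast^{'}})^{\ast}(y)\leq\mu_{A}(x\theta y)$; taking the infimum over $\theta$ yields $\leq <x,\mu_{A}>(y)$. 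The $\nu_{A}$-part is dual. Hence $(<x,A>^{\ast^{'}})^{\ast}=<x,A>\in IFSPI(S)$.

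I expect the main obstacle to be exactly this last identity: in the ideal case it was immediate from $x\gamma_{0}f\,\delta\,y=x\delta y$ for a right unity $[\gamma_{0},f]$, whereas without unities one must find a substitution that manufactures a square out of the doubly-extended product $x\gamma s\delta y$, and the choice $s=x\nu y$ (which collapses $x\theta(x\nu y)\theta y$ to $(x\theta y)\nu(x\theta y)$ by commutativity) is precisely what lets the semiprime hypothesis bite. A secondary, more routine point is the semigroup-level statement that the $IFE$ of an $IFSPI$ of a commutative semigroup is again an $IFSPI$; if a citation for it is unavailable it can be proved by the same square-substitution argument performed inside $R$, or one can bypass the operator semigroups altogether by verifying directly, on level sets, that the crisp extension $<x,I>=\{y\in S:x\gamma y\in I\ \forall\gamma\in\Gamma\}$ of a semiprime ideal $I$ of a commutative $\Gamma$-semigroup is semiprime (again using commutativity to pass from $y\mu y\in <x,I>$ to $(x\gamma y)\mu(x\gamma y)\in I$ for all $\mu,\gamma$).
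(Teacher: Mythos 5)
Your proposal is correct, and its first half coincides with the paper's own argument (the paper prints no proof for this proposition, but the intended template is visible in Propositions $2.28$ and $2.30$): pass to $R$ via Proposition $2.17$, form $<[\alpha,x],A^{\ast^{'}}>$ inside $R$ using the semiprime extension result of \cite{S4}, take the infimum over $\alpha\in\Gamma$, identify the result with $<x,A>^{\ast^{'}}$ by Lemma $2.23(2)$, and pull back to $S$ by Proposition $2.16$. Where you genuinely depart from the paper is the return step. The paper concludes that $<x,A>$ is an $IFSPI(S)$ by citing Proposition $2.17$ and Remark $5(i)$, i.e.\ the bijection of Theorem $2.18$; but the identity $(B^{\ast^{'}})^{\ast}=B$ underlying that bijection is established there only for $B$ already known to lie in $IFSPI(S)$, which is exactly what is in question for $B=<x,A>$. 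You instead prove $(<x,A>^{\ast^{'}})^{\ast}=<x,A>$ directly: the inequality $\geq$ follows from the ideal property together with the commutativity rearrangement $x\gamma s\delta y=(x\gamma y)\delta s$, and the inequality $\leq$ follows from the substitution $s=x\nu y$, $\gamma=\delta=\theta$, which collapses $x\theta(x\nu y)\theta y$ to $(x\theta y)\nu(x\theta y)$ and lets the semiprime condition $\mu_{A}(z)\geq\underset{\nu\in\Gamma}{\inf}\,\mu_{A}(z\nu z)$ bite with $z=x\theta y$ (I verified this rearrangement; it is valid in a commutative $\Gamma$-semigroup). Your route buys a self-contained, non-circular justification of the final step and makes transparent why semiprimeness replaces the unities used in Proposition $2.28$; the paper's route buys brevity at the cost of invoking a bijection whose injectivity half does not obviously apply to $<x,A>$ before the conclusion is known. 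The only point you leave at the level of a sketch --- that an infimum of $IFSPI$'s is an $IFSPI$ --- is asserted equally baldly by the paper, and your level-set justification (an intersection of semiprime ideals is semiprime) is sound.
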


\begin{proposition}
Let $S$ be a $\Gamma$-semigroup, $\{A_{i}\}_{i\in I}=(\mu_{A_{i}},\nu_{A_{i}})_{i\in I}$ be a non-empty family of $IFSPI(S)$ and let $A=(\mu_{A},\nu_{A})=\underset{i\in I}{\inf}$ $ A_{i}=(\underset{i\in I}{\inf}$ $\mu_{A_{i}},\underset{i\in I}{\sup}$ $\nu_{A_{i}}).$  Then for any $x\in S, <x,A>$ is an $IFSPI(S).$
\end{proposition}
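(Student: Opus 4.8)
The plan is to reduce the statement to the two tools already in hand: the ``infimum commutes with $(\cdot)^{\ast^{'}}$'' lemma (Lemma~2.27) and the stability of $IFSPI$ under ideal extensions for a single member (Proposition~2.30, i.e.\ the commutative $IFSPI$ case), together with the bijection of Theorem~2.18. First I would set $A=(\mu_A,\nu_A)=\inf_{i\in I}A_i$; since each $A_i$ is an $IFSPI(S)$ it is in particular an $IFI(S)$, and an infimum of $IFI(S)$ is again an $IFI(S)$, so $A$ is an $IFI(S)$. The immediate obstacle is that $S$ is not assumed commutative here, so Proposition~2.30 does not apply directly to $S$; the workaround is to pass to the right operator semigroup $R$, prove the analogue there, and pull back.

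The key steps, in order, would be: (1) Apply $(\cdot)^{\ast^{'}}$ and invoke Lemma~2.27 to get $A^{\ast^{'}}=\inf_{i\in I}(A_i)^{\ast^{'}}$ as an equality of $IFS(R)$. (2) By Proposition~2.17 each $(A_i)^{\ast^{'}}$ is an $IFSPI(R)$. (3) For the family $\{(A_i)^{\ast^{'}}\}_{i\in I}$ in $R$, establish that its infimum is again an $IFSPI(R)$ — this is the genuine ``arbitrary intersection of semiprime ideals is semiprime'' fact transcribed to the intuitionistic fuzzy, ordinary-semigroup setting, which I would prove directly via level sets: $U(\inf_i\mu_{(A_i)^{\ast'}};t)=\bigcap_i U(\mu_{(A_i)^{\ast'}};t)$ and $L(\sup_i\nu_{(A_i)^{\ast'}};t)=\bigcap_i L(\nu_{(A_i)^{\ast'}};t)$ are intersections of semiprime ideals of $R$, hence semiprime, so $\inf_i (A_i)^{\ast^{'}}$ is $IFSPI(R)$ by the level-set characterization (Theorems~3.9,~3.10 of~\cite{S5}). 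Hence $A^{\ast^{'}}$ is an $IFSPI(R)$. (4) Now take ideal extensions: for $x\in S$ and any fixed $\alpha\in\Gamma$, $\langle[\alpha,x],A^{\ast^{'}}\rangle$ is an $IFSPI(R)$ — here I would cite the semiprime ideal-extension stability in an ordinary semigroup (the $R$-analogue of Proposition~2.30 / Proposition~3.2 of~\cite{S4} for the semiprime case). Taking the infimum over $\alpha$ and using step (3) again (an $I$-indexed infimum with $I=\Gamma$) shows $\bigl(\inf_{\alpha}\langle[\alpha,x],\mu_{A^{\ast'}}\rangle,\ \sup_{\alpha}\langle[\alpha,x],\nu_{A^{\ast'}}\rangle\bigr)$ is an $IFSPI(R)$, which by Lemma~2.23(2) equals $\langle x,A\rangle^{\ast^{'}}$. (5) Finally apply $(\cdot)^{\ast}$: by Proposition~2.16, $\bigl(\langle x,A\rangle^{\ast^{'}}\bigr)^{\ast}$ is an $IFSPI(S)$, and by Theorem~2.18 this composite equals $\langle x,A\rangle$. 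Therefore $\langle x,A\rangle$ is an $IFSPI(S)$.

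The main obstacle I anticipate is step (3) — ensuring that the intuitionistic fuzzy semiprimeness is genuinely preserved under arbitrary infima in the ordinary semigroup $R$, and being careful that the level sets stay non-empty so the level-set characterization applies; one must also check the degenerate cases (e.g.\ some level set empty) don't break the argument, exactly as was handled in the proof of Proposition~2.7. A secondary subtlety is step (4): I am using the $R$-version of Proposition~2.30 together with the fact that $\Gamma$ is just an index set for the infimum in Lemma~2.23(2), so I must make sure the infimum-of-$IFSPI$ fact from step (3) is stated for an arbitrary index set, not only for a two-element family. Everything else is routine bookkeeping with the $(\cdot)^{\ast}$, $(\cdot)^{\ast^{'}}$ correspondences already developed. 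Alternatively, if one prefers to avoid invoking an unstated $R$-analogue of Proposition~2.30, one could instead prove step (4) directly by a level-set computation on $R$ mirroring the proof of Proposition~2.30, but routing through the commutative-free semiprime ideal extension lemma for ordinary semigroups; I would present the operator-semigroup route as the cleaner option.
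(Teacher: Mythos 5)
Your proposal follows essentially the same route as the paper's own proof: pass to the right operator semigroup via $(\cdot)^{\ast^{\prime}}$, use Proposition 2.17 and Lemma 2.27 to see that $A^{\ast^{\prime}}=\inf_{i}(A_i)^{\ast^{\prime}}$ is an $IFSPI(R)$, extend by $[\alpha,x]$ in $R$, identify the infimum over $\alpha$ with $\langle x,A\rangle^{\ast^{\prime}}$ via Lemma 2.23(2), and pull back with $(\cdot)^{\ast}$ and the bijection of Theorem 2.18. One caveat: your claimed workaround for the missing commutativity hypothesis does not actually succeed, because Lemma 2.23 is itself only established for commutative $S$ --- but the paper's own proof has the same dependence (it opens with ``Since $S$ is commutative, $R$ is also commutative''), so commutativity is evidently intended in the statement and your extra care in filling in the ``infimum of $IFSPI(R)$ is $IFSPI(R)$'' step via level sets is a genuine improvement in detail, not a change of method.
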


\begin{proof}
Let $R$ be the right operator semigroup of $S.$ Since $S$ is commutative, $R$ is also commutative. Now $\{A_{i}^{\ast^{'}}\}_{i\in I}=(\mu_{A_{i}}^{\ast^{'}},\nu_{A_{i}}^{\ast^{'}})_{i\in I}$ is a non-empty family of $IFSPI(R)(cf.$ Proposition $2.17).$ Hence $\underset{i\in I}{\inf}$ $A_{i}^{\ast^{'}}$ is an $IFSPI(R).$ Thus by Lemma $2.27,$ $A^{\ast^{'}}$ is an $IFSPI(R).$ Then for any$[\alpha,x]\in R,$ $<[\alpha,x],A^{\ast^{'}}>=(<[\alpha,x],\mu_{A}^{\ast^{'}}>,<[\alpha,x],\nu_{A}^{\ast^{'}}>)$ is an $IFSPI(R)(cf.$ Proposition $2.11\cite{S4})$ and hence $(\underset{\alpha\in\Gamma}{\inf}$ $<[\alpha,x],\mu_{A}^{\ast^{'}}>,\underset{\alpha\in\Gamma}{\sup}$ $<[\alpha,x],\nu_{A}^{\ast^{'}}>)$ is an $IFSPI(R).$ So by Lemma $2.23(2),$ $(<x,\mu_{A}>^{\ast^{'}},<x,\nu_{A}>^{\ast^{'}})=<x,A>^{\ast^{'}}$ is an $IFSPI(R).$ Consequently, $(<x,A>^{\ast^{'}})^{\ast}$ is an $IFSPI(S)(cf.$ Proposition $2.16).$ Hence $<x,A>$ is an $IFSPI(S)(cf.$ Proposition $2.17$  and Remark $5(i)$)
\end{proof}

\begin{theorem}
Let $S$ be a commutative $\Gamma$-semigroup, $\{S_{i}\}_{i}$ be a non-empty family of $SPI(S),$  $A:=\underset{i\in I}{\bigcap} S_{i}\neq\phi$ and $M=(\mu_{A},\mu^{c}_{A})$ . Then $<x,M>$ is an $IFSPI(S)$ for all $x\in S.$
\end{theorem}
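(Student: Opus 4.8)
The plan is to deduce the statement from Proposition $2.30$ by exhibiting $M$ as the infimum of the characteristic intuitionistic fuzzy sets of the semiprime ideals $S_i$.

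For each $i\in I$ I would consider the intuitionistic fuzzy set $M_i:=(\chi_{S_i},\chi^{c}_{S_i})$, where $\chi_{S_i}$ is the characteristic function of $S_i$, so that in the notation of the statement $M=(\mu_A,\mu^{c}_A)$ with $\mu_A=\chi_A$ and $A=\underset{i\in I}{\bigcap}S_i$. Since $S_i$ is an $SPI(S)$, every non-empty set $U(\chi_{S_i};t)$ is either $S$ or $S_i$ and every non-empty set $L(\chi^{c}_{S_i};t)$ is either $S$ or $S_i$, hence an $SPI(S)$; so by the level-set characterisation of intuitionistic fuzzy semiprime ideals from \cite{S5}, each $M_i$ is an $IFSPI(S)$. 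Next I would record the pointwise identities $\underset{i\in I}{\inf}\,\chi_{S_i}=\chi_{A}=\mu_A$ and $\underset{i\in I}{\sup}\,\chi^{c}_{S_i}=\chi^{c}_{A}=\mu^{c}_A$ (for $s\in S$ the first infimum equals $1$ exactly when $s\in S_i$ for all $i$, i.e.\ when $s\in A$, and equals $0$ otherwise, and dually for the supremum). Thus $M=(\mu_A,\mu^{c}_A)=(\underset{i\in I}{\inf}\,\chi_{S_i},\underset{i\in I}{\sup}\,\chi^{c}_{S_i})=\underset{i\in I}{\inf}\,M_i$, exactly in the shape required by Proposition $2.30$.

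Since $\{S_i\}_{i\in I}$ is non-empty, $\{M_i\}_{i\in I}$ is a non-empty family of $IFSPI(S)$, and since $A\neq\phi$ the infimum $M$ is a (non-empty) $IFS(S)$. Applying Proposition $2.30$ to the family $\{M_i\}_{i\in I}$, which is legitimate because $S$ is commutative, gives at once that $<x,M>$ is an $IFSPI(S)$ for every $x\in S$, as required.

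The argument is almost entirely formal. The only ingredient that is not purely symbol-pushing is the claim that $M_i$ is an $IFSPI(S)$, which I regard as the main point and would settle by the level-set argument above together with the relevant theorem of \cite{S5}. I would also point out an alternative, shorter route: an easy check shows that the intersection $A=\underset{i\in I}{\bigcap}S_i$ of semiprime ideals is again a semiprime ideal, so $M=(\chi_A,\chi^{c}_A)$ is an $IFSPI(S)$ by \cite{S5}, and then Proposition $2.29$ applied directly to $M$ concludes the proof; I would carry out the argument via Proposition $2.30$ but mention this variant.
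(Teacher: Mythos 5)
Your argument is correct, but it is not the route the paper takes. The paper's proof works entirely at the crisp level and then transfers through the right operator semigroup: it notes that each $S_{i}^{\ast^{'}}$ is a $SPI(R)$, uses the crisp intersection lemma $(\bigcap_{i}S_{i})^{\ast^{'}}=\bigcap_{i}S_{i}^{\ast^{'}}\neq\phi$, invokes Corollary $3.12$ of \cite{S4} (the semigroup version of the present theorem) to get that $<[\alpha,x],M^{\ast^{'}}>$ is an $IFSPI(R)$ for every $\alpha$, passes to $\underset{\alpha}{\inf}/\underset{\alpha}{\sup}$ and Lemma $2.23(2)$ to identify this with $<x,M>^{\ast^{'}}$, and finally pulls back to $S$ via $(\ )^{\ast}$ and Theorem $2.18$. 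You instead stay inside $S$: you write $M=\underset{i\in I}{\inf}\,M_{i}$ with $M_{i}=(\chi_{S_{i}},\chi^{c}_{S_{i}})$, check via level sets that each $M_{i}$ is an $IFSPI(S)$, and feed the family into Proposition $2.30$, which already encapsulates all of the operator-semigroup work. This is a genuine simplification: it avoids re-running the transfer argument and makes the theorem a corollary of the proposition immediately preceding it. The only ingredient you must supply that the paper does not state explicitly is that the characteristic intuitionistic fuzzy pair of a semiprime ideal is an $IFSPI(S)$; your level-set justification of this is sound (note only that you need $S$ itself to count as a semiprime ideal for the trivial level sets, which it does). Your remark about Proposition $2.30$ needing commutativity is well taken --- its statement omits the hypothesis but its proof uses it, and the present theorem supplies it. Your alternative route (the intersection $A$ of semiprime ideals is itself semiprime, so $M$ is an $IFSPI(S)$ and Proposition $2.29$ applies directly) is also valid and is the shortest path of all; either version is acceptable and both differ from the paper's.
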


\begin{proof}
Since $\forall i\in I,$ $S_{i}$ is a $SPI(S),$ $S_{i}^{\ast^{'}}$ is a $SPI(R),$ $\forall i\in I.$ Now $A:=\underset{i\in I}{\bigcap} S_{i}.$ Then $A^{\ast^{'}}=(\underset{i\in I}{\bigcap} S_{i})^{\ast^{'}}=\underset{i\in I}{\bigcap} S_{i}^{\ast^{'}}(cf.$ Lemma $2.27)\neq\phi.$ So by Corollary $3.12\cite{S4},$ $<[\alpha,x],M>=(<[\alpha,x],\mu_{A}>,<[\alpha,x],\mu_{A}^{c}>)$ is an $IFSPI(R),$ $\forall\alpha\in\Gamma.$ This implies that $(\underset{\alpha\in\Gamma}{\inf}$ $<[\alpha,x],\mu_{A}>,\underset{\alpha\in\Gamma}{\sup}$ $<[\alpha,x],\mu_{A}^{c}>)$ is an $IFSPI(R),i.e.,(<[\alpha,x],\mu_{A}>^{\ast^{'}},<[\alpha,x],\mu_{A}^{c}>^{\ast^{'}})=<x,M>^{\ast^{'}}$ is an $IFSPI(R).$ Hence $(<x,M>^{\ast^{'}})^{\ast}$ is an $IFSPI(S).$ Consequently, by Theorem $2.18,$ $<x,M>$ is an $IFSPI(S).$
\end{proof}

To conclude the paper we obtain the following characterization of a prime ideal of a $\Gamma$-semigroup $S$ in terms of intutionistic fuzzy ideal extension.

\begin{theorem}
Let $S$ be a $\Gamma$-semigroup, $P$ be an $I(S)$ and $M=(\mu_{P},\mu^{c}_{P})$ where $\mu_{P}$ is the characteristic function of $P.$ Then $P$ is $PI(S)$ if and only if for $x\in S$ with $x\notin P,$ $<x,M>=M.$
\end{theorem}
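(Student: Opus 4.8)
The plan is to unwind the definition of the ideal extension $<x,M>$ for the particular $IFS$ $M=(\mu_{P},\mu^{c}_{P})$ obtained from the characteristic function of $P$, to translate the pointwise identity $<x,M>=M$ into an elementary condition on $P$, and then to compare that condition with the element-wise description of a prime ideal of a $\Gamma$-semigroup, namely that $a\Gamma b\subseteq P$ forces $a\in P$ or $b\in P$.

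First I would recall that for an $IFS(S)$ $A=(\mu_{A},\nu_{A})$ and $x\in S$ one has $<x,\mu_{A}>(y)=\underset{\gamma\in\Gamma}{\inf}\mu_{A}(x\gamma y)$ and $<x,\nu_{A}>(y)=\underset{\gamma\in\Gamma}{\sup}\nu_{A}(x\gamma y)$ for $y\in S$. Specialising to $\mu_{A}=\mu_{P}=\chi_{P}$ and $\nu_{A}=\mu^{c}_{P}=1-\chi_{P}$, which take only the values $0$ and $1$, one sees that for each $y\in S$ we have $<x,\mu_{P}>(y)=1$ and $<x,\mu^{c}_{P}>(y)=0$ if $x\Gamma y\subseteq P$, whereas $<x,\mu_{P}>(y)=0$ and $<x,\mu^{c}_{P}>(y)=1$ if $x\Gamma y\not\subseteq P$. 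Consequently $<x,M>=M$ is equivalent to the statement: for every $y\in S$, $x\Gamma y\subseteq P$ if and only if $y\in P$. Since $P$ is an $I(S)$, the inclusion $x\Gamma y\subseteq S\Gamma P\subseteq P$ is automatic whenever $y\in P$, so one implication is free and $<x,M>=M$ reduces to the single implication $x\Gamma y\subseteq P\Rightarrow y\in P$ for all $y\in S$.

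For the \emph{forward} direction, suppose $P$ is $PI(S)$ and fix $x\in S$ with $x\notin P$. If $y\in S$ satisfies $x\Gamma y\subseteq P$, then primeness of $P$ together with $x\notin P$ forces $y\in P$; combined with the free implication noted above, this yields $<x,M>=M$. For the \emph{converse}, suppose $<x,M>=M$ for every $x\in S$ with $x\notin P$, and let $a,b\in S$ with $a\Gamma b\subseteq P$. If $a\notin P$, then $<a,M>=M$, so evaluating the first coordinate at $b$ and using $a\Gamma b\subseteq P$ gives $1=<a,\mu_{P}>(b)=\mu_{P}(b)$, i.e. $b\in P$. Hence $a\in P$ or $b\in P$, so $P$ is $PI(S)$.

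The whole argument is a routine definition-chase, so I do not anticipate a genuine obstacle. The two points deserving attention are the reduction in the second paragraph — recognising that the coordinate $\mu^{c}_{P}$ carries no information beyond $\mu_{P}$ since $M$ is built from a characteristic function, and using the ideal property of $P$ to discharge the case $y\in P$ — and making precise at the outset the element-wise characterisation of a prime ideal of a $\Gamma$-semigroup that is being invoked.
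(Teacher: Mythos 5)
Your proof is correct, and the reduction in your second paragraph --- that $<x,M>=M$ is equivalent to ``$x\Gamma y\subseteq P\Rightarrow y\in P$ for all $y\in S$'', the reverse implication being free because $P$ is an ideal --- is sound; your converse direction coincides with the paper's almost verbatim. Where you genuinely diverge is the forward direction. The paper does not argue directly in $S$: it passes to the right operator semigroup $R$, notes that $P^{\ast^{'}}$ is a $PI(R)$ (Proposition $2.15$) and that $[\alpha,x]\notin P^{\ast^{'}}$ for some $\alpha$, invokes the corresponding extension result for ordinary semigroups (Corollary $3.15$ of \cite{S4}) to get $<[\alpha,x],M^{\ast^{'}}>=M^{\ast^{'}}$, and then transports this back to $S$ using Lemmas $2.12$ and $2.23(1)$ and the bijection of Theorem $2.10$, finishing the equality with Proposition $5.5(1)$ of \cite{S5}. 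You instead just evaluate $<x,\mu_{P}>$ pointwise and apply the element-wise primeness of $P$ ($x\Gamma y\subseteq P$ and $x\notin P$ force $y\in P$), which is legitimate here since that is exactly the notion of $PI(S)$ the paper itself uses in its converse direction. The trade-off: the paper's route is deliberately indirect because the whole point of the article is to showcase the operator-semigroup transfer machinery, whereas your argument is self-contained, shorter, and avoids importing hypotheses that the transfer lemmas carry (Lemma $2.23$ is stated for commutative $S$ and Theorem $2.10$ for $S$ with unities, neither of which appears in the statement of this theorem), so it is arguably the cleaner proof of the bare statement. The one thing to make explicit in a final write-up is the element-wise definition of $PI(S)$ you are invoking, since the equivalence of the theorem genuinely depends on it.
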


\begin{proof}
Let $P$ be a $PI(S)$ and $x\notin P.$ Then $P^{\ast^{'}}$ is a $PI(R)(cf.$ Proposition $2.15).$ Also $[\alpha,x]\notin P^{\ast^{'}}$ for some $\alpha\in\Gamma.$ Then by Corollary $3.15\cite{S4},$ $<[\alpha,x],M^{\ast^{'}}>= M^{\ast^{'}} i.e., (<[\alpha,x],\mu^{\ast^{'}}_{P}>,<[\alpha,x],(\mu^{c}_{P})^{\ast^{'}}>=((\mu_{P})^{\ast^{'}},(\mu^{c}_{P})^{\ast^{'}})=M^{\ast^{'}}.$ Now by Lemma $2.12,$ $(<[\alpha,x],\mu^{\ast^{'}}_{P}>,<[\alpha,x],(\mu^{c}_{P})^{\ast^{'}}>)=(\mu_{P},\mu^{c}_{P})^{\ast^{'}}.$ Hence $(<[\alpha,x],M^{\ast^{'}}>)^{\ast}=(<[\alpha,x],\mu^{\ast^{'}}_{P}>,<[\alpha,x],(\mu^{c}_{P})^{\ast^{'}}>)^{\ast}=((\mu_{P},\mu^{c}_{P})^{\ast^{'}})^{\ast}=(\mu_{P},\mu^{c}_{P})=M.$ By Lemma $2.23(1),$ $<x,M>^{\ast^{'}}\subseteq <[\alpha,x],M^{\ast^{'}}>\forall\alpha\in\Gamma.$ So $(<x,M>^{\ast^{'}})^{\ast}\subseteq M.$ Consequently by Theorem $2.10,$ $<x,M>\subseteq M.$ Again by Proposition $5.5(1) \cite{S5},$ we have $M\subseteq <x,M>.$ Hence $<x,M>=M.$

Conversely, suppose $<z,M>=M$ for all $z\in S$ with$z\notin P.$ Let $x\Gamma y\subseteq P$ where $x,y\in S.$ Then $\mu_{P}(x\gamma y)=1$ and $\mu^{c}_{P}(x \gamma y)=0\forall\gamma\in\Gamma.$ Let $x\notin P.$ Then by hypothesis $<x,M>=M.$ This gives $<x,\mu_{P}>(y)=\mu_{P}(y)$ and $<x,\mu^{c}_{P}>(y)=\mu^{c}_{P}(y).$ Then $\underset{\gamma\in\Gamma}{\inf}$ $\mu_{P}(x\gamma y)=\mu_{P}(y)$ and $\underset{\gamma\in\Gamma}{\sup}$ $\mu^{c}_{P}(x\gamma y)=\mu^{c}_{P}(y)$ which implies that $\mu_{P}(y)=1$ and $\mu^{c}_{P}(y)=0$ whence $y\in P.$ Hence $P$ is a $PI(S).$
\end{proof}


\end{document}